\theoremstyle{definition}
\newtheorem{thm}{Theorem}[section]
\newtheorem{lem}[thm]{Lemma}
\newtheorem{prop}[thm]{Proposition}
\newtheorem{cor}[thm]{Corollary}
\newtheorem*{rmk}{Remark}
\DeclareMathOperator{\id}{Id}
\newcommand{\R}{\mathbb{R}}
\newcommand{\T}{\mathbb{T}}
\newcommand{\TP}{\overline{\partial}{}}
\newcommand{\TJ}{\langle\overline{\partial}{}\rangle}
\newcommand{\curl}{\text{curl }}
\newcommand{\dive}{\text{div }}
\newcommand{\ST}{\text{ ST}}
\newcommand{\RT}{\text{ RT}}
\newcommand{\eql}{\overset{L}{=}}
\newcommand{\la}{\lambda}
\newcommand{\q}{\quad}
\newcommand{\p}{\partial}
\newcommand{\bz}{\mathbf{b}_0}
\newcommand{\bp}{(\bz\cdot\partial)}
\newcommand{\btp}{(\bz\cdot\overline{\partial})}
\newcommand{\dd}{\mathfrak{D}}
\newcommand{\DD}{\mathcal{D}}
\newcommand{\nab}{\nabla}
\newcommand{\pa}{\nabla_A}
\newcommand{\lap}{\Delta}
\newcommand{\di}{\text{div}\,}
\newcommand{\cp}{\overline{\partial}{}}
\newcommand{\dy}{\,\mathrm{d}y}
\newcommand{\dt}{\,\mathrm{d}t}
\newcommand{\dS}{\,\mathrm{d}S}
\newcommand{\vol}{\text{vol}\,}
\newcommand{\eps}{\varepsilon}
\newcommand{\VV}{\mathbf{V}}
\newcommand{\QQ}{\mathbf{Q}}
\newcommand{\ff}{\mathfrak{f}}
\newcommand{\AAA}{\mathbf{A}}
\newcommand{\BB}{\mathfrak{B}}
\newcommand{\PP}{\mathcal{P}}
\newcommand{\uu}{\mathbf{u}}
\newcommand{\ww}{\mathbf{w}}
\newcommand{\vv}{\mathbf{v}}
\newcommand{\wt}{\sqrt{\sigma}}
\newcommand{\hh}{\mathcal{H}}
\newcommand{\io}{\int_{\Omega}}
\newcommand{\ig}{\int_{\Gamma}}
\newcommand{\nn}{\hat{n}}
\numberwithin{equation}{section}
\newcommand{\si}{\sigma}
\newcommand{\red}{}
\begin{document}
\bibliographystyle{plain}
\title{\textbf{Zero Surface Tension Limit of the Free-Boundary Problem in Incompressible Magnetohydrodynamics}}
\author{Xumin Gu\thanks{School of Mathematics, Shanghai University of Finance and Economics, Shanghai 200433, China. Email: \texttt{gu.xumin@shufe.edu.cn}. XG was supported in part by NSFC Grant 12031006},\,\, Chenyun Luo\thanks{Department of Mathematics, The Chinese University of Hong Kong, Shatin, NT, Hong Kong.
Email: \texttt{cluo@math.cuhk.edu.hk}.\,\,\,\,\,\,\,
CL was supported in part by the RGC grant of Hong Kong CUHK-24304621 and CUHK Direct Grant for Research No. 4053457},\,\,  Junyan Zhang \thanks{Department of Mathematics, National University of Singapore, Singapore.
Email: \texttt{zhangjy@nus.edu.sg}}}
\date{\today}
\maketitle

\begin{abstract}
We show that the solution of the free-boundary incompressible ideal magnetohydrodynamic (MHD) equations with surface tension converges to that of the free-boundary incompressible ideal MHD equations without surface tension given the Rayleigh-Taylor sign condition holds initially. This result is a continuation of the authors' previous works \cite{gu2016construction,luozhangMHDST3.5, GLZ}. Our proof is based on the combination of the techniques developed in our previous works \cite{gu2016construction,luozhangMHDST3.5, GLZ}, Alinhac good unknowns, and a crucial anti-symmetric structure on the boundary.

\end{abstract}

\setcounter{tocdepth}{1}

\tableofcontents

\section{Introduction}
We consider the following 3D incompressible ideal MHD system which describes the motion of a conducting fluid with free surface boundary in an electro-magnetic field under the influence of surface tension
\begin{equation}\label{MHD}
\begin{cases}
D_t u-B\cdot\nabla B+\nabla P=0,~~P:=p+\frac{1}{2}|B|^2~~~& \text{in}~\DD; \\
D_t B-B\cdot\nabla u=0,~~~&\text{in}~\DD; \\
\dive u=0,~~\dive B=0,~~~&\text{in}~\DD,
\end{cases}
\end{equation} with boundary conditions
\begin{equation}\label{MHDB}
\begin{cases}
D_t|_{\p\DD}\in\mathcal{T}(\p\DD), \\
P=\sigma\mathcal{H}~~~& \text{on}~\p\DD, \\
B\cdot \nn=0~~~& \text{on}~\p\DD.
\end{cases}
\end{equation} 
Here,  $D_t:=\p_t+u\cdot\nabla$ denotes the material derivative, and $\DD:=\cup_{0\leq t\leq T}\{t\}\times\DD_t$, where $\DD_t\subseteq\R^3$ is a bounded domain occupied by the conducting fluid (plasma) whose boundary $\p\DD_t$ moves with the velocity of the fluid. Also, we denote by $v=(v_1,v_2,v_3)$ the fluid's velocity, $B=(B_1,B_2,B_3)$ the magnetic field, $p$ the fluid's pressure, and $P:=p+\frac12|B|^2$ the total pressure. The quantity $\mathcal{H}$ is the mean curvature of the free surface $\p\DD_t$ and $\sigma\geq 0$ is a given constant, called surface tension coefficient. 
Finally, we use $\nn$ to denote the exterior unit normal to $\p\DD_t$.  The equations $\dive B=0$ and $B\cdot\nn|_{\p\DD}=0$ are only the constraints for initial data that propagate to any $t>0$ if initially hold (cf. \cite{haoluo2014}). 

When $\sigma>0$ (i.e., the MHD equations are under the influence of the surface tension), we proved in \cite{GLZ} that given a simply connected domain $\DD_0\subseteq\R^3$ and initial data $u_0$ and $B_0$ satisfying $\dive u_0=0$ and $\dive B_0=0, B_0\cdot \nn|_{\p\DD_0}=0$, there exist a set $\DD$ and vector fields $u$ and $B$ that solve \eqref{MHD}-\eqref{MHDB} with initial data
\begin{equation}
\DD_0=\{x: (0,x)\in \DD\},\q (u,B)=(u_0, B_0),\q \text{in}\,\,\{t=0\}\times \Omega_0.\label{data}
\end{equation}
In addition, when $\sigma=0$ (i.e., the surface tension is neglected) the initial-boundary value problem \eqref{MHD}, \eqref{MHDB}, \eqref{data} is known to be ill-posed \cite{haoluo2018,ebin1987} unless the Rayleigh-Taylor sign condition is imposed 
\begin{align}
-\nab_n P \geq c_0>0, \quad \text{on}\,\,\Gamma,
\label{Taylor}
\end{align}
for some $c_0>0$. 
In fact, \textit{we only need to impose \eqref{Taylor} on the initial data and then show that it propagates within the interval of existence.}  The local well-posedness (LWP) of the initial-boundary value problem \eqref{MHD}, \eqref{MHDB}, \eqref{data}, \eqref{Taylor} was obtained by the first author and Wang in \cite{gu2016construction}.

The following terminologies will be used throughout the rest of this manuscript for the sake of clarity: We use ``$\sigma>0$ problem" to denote the initial-boundary value problem \eqref{MHD}, \eqref{MHDB}, \eqref{data} with a positive surface tension coefficient $\sigma$. In addition, we use ``$\sigma=0$ problem" to denote the initial-boundary value problem \eqref{MHD}, \eqref{MHDB}, \eqref{data} without the surface tension but with the physical condition \eqref{Taylor}.  In this paper, we would like to prove the zero surface tension limit of \eqref{MHD}-\eqref{data}, i.e., the solution to the ``$\sigma>0$ problem" converges to the solution to the ``$\sigma=0$ problem" as $\sigma\to 0_+$, provided the Rayleigh-Taylor sign condition \eqref{Taylor} holds initially.

Before stating our results, we would like to review the related previous results. The free-boundary problem of MHD can be considered \red{as} the simplified version of the plasma-vacuum free-interface model which is the basic theoretical model for plasma confinement and some astrophysical phenomena. We refer to our previous work \cite[Section 1.1.1]{GLZ} for a detailed discussion. In particular, the surface tension cannot be neglected when we use a free-boundary MHD system to model the liquid metal, film flow, etc \cite{MHDSTphy1, MHDSTphy2, MHDSTphy3}. Even if we consider the MHD flows in astrophysical plasmas, where the surface tension effect and magnetic diffusion are usually neglected, it is still useful to keep surface tension as a stabilization effect in numerical simulations of the magnetic Rayleigh-Taylor instability \cite{MHDSTphy4, MHDSTphy5}. 

In the absence of the magnetic field, i.e., $B=\mathbf{0}$, the free-boundary MHD system is reduced to the free-boundary incompressible Euler equations. The first breakthrough came into Wu \cite{wu1997LWPww,wu1999LWPww} for the LWP of 2D and 3D incompressible irrotational gravity water waves. In the case of nonzero vorticity, Christodoulou-Lindblad \cite{christodoulou2000motion} first established the nonlinear a priori estimates and Lindblad \cite{lindblad2005well} proved the LWP by Nash-Moser iteration. Later, Coutand-Shkoller \cite{coutand2007LWP} avoided the Nash-Moser iteration and extended the result to the case $\sigma>0$. See also \cite{zhangzhang08Euler,shatah2008a,shatah2008b,shatah2011,DK2017,DKT} and references therein for the related works.

For the mathematical studies of free-boundary incompressible MHD system without surface tension, Hao-Luo \cite{haoluo2014,haoluo2019} proved the a priori estimates and the linearized LWP by generalizing \cite{christodoulou2000motion,lindblad2003well}. The first author and Wang \cite{gu2016construction} proved the LWP for the nonlinear problem. See also Lee \cite{leeMHD1,leeMHD2} for an alternative proof by using the vanishing viscosity-resistivity method, Sun-Wang-Zhang \cite{sun2015well} for the incompressible MHD current-vortex sheets, Sun-Wang-Zhang \cite{sun2017well} and the first author \cite{guaxi1,guaxi2} for the plasma-vacuum interface model, and the second and the third authors \cite{luozhangMHD2.5} for the minimal regularity estimates in a small fluid domain. For the compressible ideal MHD, we refer to \cite{chengqMHD,trakhininMHD2009,wangyuMHD2d,secchi2013well,trakhininMHD2020,Zhang2021CMHD} and references therein.

However, when the surface tension is not neglected, much less has been developed and most of the previous results focus on viscous or resistive MHD \cite{chendingMHDlimit, GuoMHDSTviscous,wangxinMHD}. To the best of our knowledge, our previous works \cite{luozhangMHDST3.5, GLZ} are the first breakthrough about incompressible ideal MHD with surface tension. In addition, we refer to \red{Li-Li \cite{sb} for well-posedness and vanishing surface tension limit under a non-collinearity condition for a two-phase model by assuming the free interface is a graph}; and Trakhinin-Wang \cite{trakhininMHD2021} for the compressible case.

This paper is the continuation of our previous works \cite{gu2016construction,luozhangMHDST3.5, GLZ}: we aim to prove the zero surface tension limit of the free-boundary problem in incompressible ideal MHD with surface tension. This can be achieved by establishing the uniform-in-$\sigma$ energy estimates. Our proof is based on the combination of the techniques in \cite{gu2016construction,luozhangMHDST3.5, GLZ}, Alinhac good unknowns, and the \textit{newly-developed} anti-symmetric structure on the boundary. 

\subsection{Reformulation in Lagrangian coordinates}
We reformulate the MHD equations in the Lagrangian coordinates and thus the free-surface domain becomes fixed. Let $\Omega\subseteq\R^3$ be a bounded domain. Denoting coordinates on $\Omega$ by $y=(y_1,y_2,y_3)$, we define $\eta:[0,T]\times \Omega\to\DD$ to be the flow map of the velocity $v$, i.e., 
\begin{equation}
\p_t \eta (t,y)=u(t,\eta(t,y)),\q
\eta(0,y)=\eta_0(y),
\end{equation}where $\eta_0:\Omega\to\DD_0$ is a diffeomorphism. We introduce the Lagrangian velocity, magnetic field, and pressure respectively by
\begin{equation}
v(t,y)=u(t,\eta(t,y)),\q
b(t,y)=B(t,\eta(t,y)),\q
q(t,y)=P(t,\eta(t,y)).
\end{equation}
Let $\p$ be the spatial derivative concerning the $y$ variable. We define the cofactor matrix $A=[\p\eta]^{-1}$ \red{and the Jacobian $J:=\det[\p\eta]$}. Then direct calculation gives the explicit form of $A$
\begin{align} \label{A represent}
A=\red{J^{-1}}
\begin{pmatrix}
\p_2 \eta \times \p_3 \eta\\
\p_3 \eta \times \p_1\eta\\
\p_1\eta\times\p_2 \eta
\end{pmatrix}
=
\begin{pmatrix}
\epsilon^{\alpha\lambda \tau} \p_2\eta_\lambda\p_3 \eta_\tau\\
\epsilon^{\alpha\lambda \tau} \p_3\eta_\lambda\p_1 \eta_\tau\\
\epsilon^{\alpha\lambda \tau} \p_1\eta_\lambda\p_2 \eta_\tau
\end{pmatrix},
\end{align}
where $\epsilon^{\alpha\la \tau}$ is the fully antisymmetric symbol with $\epsilon^{123}=1$. 
In light of \eqref{A represent}, it is easy to see that $A$ verifies the Piola's identity
\begin{equation}\label{piola}
\p_\mu (\red{J}A^{\mu\alpha}) = 0,
\end{equation}
where the summation convention is used for repeated upper and lower indices. Above and throughout, all Greek indices range over $1, 2, 3$, and the Latin indices range over $1, 2$. 
For the sake of simplicity and clean notation, we consider the model case when
\begin{align}
\Omega = \T^2\times (0,1),
\end{align}
where $ \partial\Omega=\Gamma_0\cup\Gamma$ and $\Gamma=\T^2\times \{1\}$ is the top (moving) boundary, $\Gamma_0=\T^2\times\{0\}$ is the fixed bottom.
This is known to be the reference domain. 
  Using a partition of unity, e.g., \cite{coutand2007LWP, DKT,luozhangMHD2.5}, a general domain can also be treated with the same tools we shall present. However, choosing $\Omega$ as above allows us to focus on the real issues of the problem without being distracted by the cumbersomeness of the partition of unity. 
  
\begin{rmk}
The use of Lagrangian coordinates loosens the restriction on the geometry of the free surface in comparison to treating the free interface as a graph. Specifically, with a large initial velocity,  the free surface may fail to form a graph within a very short time interval. Nonetheless, using Lagrangian coordinates opens the possibility to go beyond that time.
\end{rmk}

The system \eqref{MHD}-\eqref{MHDB} can be reformulated as:
\begin{equation}\label{MHDL1}
\begin{cases}
\partial_tv_{\alpha}-b_{\beta}A^{\mu\beta}\partial_{\mu}b_{\alpha}+A^{\mu}_{\alpha}\partial_{\mu}q=0~~~& \text{in}~[0,T]\times\Omega;\\
\partial_t b_{\alpha}-b_{\beta}A^{\mu\beta}\partial_{\mu}v_{\alpha}=0~~~&\text{in}~[0,T]\times \Omega ;\\
\red{\dive_A v:=}A^{\mu\alpha}\partial_{\mu}v_{\alpha}=0,~~\red{\dive_A b:=}A^{\mu\alpha}\partial_{\mu}b_{\alpha}=0~~~&\text{in}~[0,T]\times\Omega;\\
v\cdot N=b\cdot N=0~~~&\text{on}~\Gamma_0;\\
A^{\mu\alpha}N_{\mu}q+\sigma(\sqrt{g}\Delta_g \eta^{\alpha})=0 ~~~&\text{on}~\Gamma;\\
A^{\mu\nu}b_{\nu}N_{\mu}=0 ~~~&\text{on}~\Gamma,\\
(\eta,v,b)=(\red{\eta_0},v_0,b_0)~~~&\text{on}~\{t=0\}{\times}\overline{\Omega}.
\end{cases}
\end{equation}
where \red{$A^{\mu}_{\alpha}:=\delta_{\alpha\beta} A^{\mu \beta}$},  $N$ is the unit outer normal vector to $\p\Omega$, particularly $N=(0,0,-1)$ on $\Gamma_0$ and $N=(0,0,1)$ on $\Gamma$, and $\Delta_g$ is the Laplacian of the metric $g_{ij}$ induced on $\p\Omega(t)$ by the embedding $\eta$. Specifically, we have:
\begin{equation}\label{gij}
g_{ij}=\TP_i\eta^{\mu}\TP_j\eta_{\mu},~\Delta_g(\cdot)=\frac{1}{\sqrt{g}}\TP_i(\sqrt{g}g^{ij}\TP_j(\cdot)),\text{ where } g:=\det (g_{ij}),
\end{equation} 
where $\TP = (\TP_1, \TP_2)$ denotes the tangential derivative (with respect to $\Gamma$). \red{The symmetric matrix $\{g_{ij}\}\in\R^{2\times2}$ is indeed a metric on the free surface, as one can easily show that all of its leading principal minors are positive to prove the positive-definiteness. }

\red{One can show that $\p_t J=J\text{tr}(A\p_t[\p\eta])=J\dive_A v=0$ and thus the Jacobian $J=J_0=\det[\p\eta_0]$ is independent of time. This property corresponds to incompressibility. To get rid of unnecessary terms involving $J_0$, we may assume $J_0=1$ for cleanness in the calculation. Under the Lagrangian formulation, we can express the magnetic field $b$ in terms of its initial data and the flow map. Specifically, by the second equation of \eqref{MHDL1} and the divergence-free condition on $b$, we get $\p_t(A^{\mu\nu}b_\nu)=0$ which implies $A^{\mu\nu}b_{\nu}=A_0^{\mu\nu}b_{0\nu}$ and thus $b_{\alpha}=A_0^{\mu\nu}b_{0\nu}\p_\mu \eta_{\alpha}:=\left((Ab_0)\cdot\p\right)\eta^{\alpha}$. We refer to Gu-Wang \cite[(1.13)-(1.15)]{gu2016construction} for the proof. Therefore, we denote $\bz=(Ab)_0$ and thus $b=\bp\eta$.}

We formulate the free-boundary MHD equations with and without surface tension. For each $\sigma>0$,  let $(\eta^\sigma, v^\sigma, q^\sigma)$ verifies
\begin{equation}\label{MHDLST}
\begin{cases}
\p_t\eta^\sigma=v^\sigma~~~& \text{in}~[0,T]\times\Omega;\\
\p_tv^\sigma-\bp^2\eta^\sigma+\nab_{A(\eta^\sigma)} q^\sigma=0~~~& \text{in}~[0,T]\times\Omega;\\
\di_{A(\eta^\sigma)} v^\sigma := A(\eta^\sigma)^{\mu\nu} \p_\mu v_\nu^\sigma=0,&\text{in}~[0,T]\times\Omega;\\
\di \bz:=\delta^{\mu}_{\nu}\p_\mu (\bz)^{\nu}=0~~~&\text{in}~\{0\}\times \Omega ;\\
(v^\sigma)^3=\bz^3=0~~~&\text{on}~\Gamma_0;\\
A(\eta^\sigma)^{3\alpha}q^\sigma+\sigma(\sqrt{g^\sigma}\Delta_{g^\sigma} (\eta^\sigma)^{\alpha})=0 ~~~&\text{on}~\Gamma;\\
\bz^3=0 ~~~&\text{on}~\Gamma,\\
(\eta^\sigma,v^\sigma)=(\red{\eta_0},v_0)~~~&\text{on}~\{t=0\}{\times}\overline{\Omega},
\end{cases}
\end{equation}
where $\nab_{A(\eta^\sigma)}^{\alpha}:=A(\eta^\sigma)^{\mu\alpha}\p_\mu$ denotes the covariant derivative, and the induced metric $g_{ij}=g_{ij}(\eta^\sigma)$ is given in \eqref{gij}. \red{Let $\nab_{A_0}:=\nab_{A(\eta_0)}$, and $\Delta_{A_0}:= \nab_{A_0}\cdot \nab_{A_0}$}. The initial pressure $q_0$ is determined by $\eta_0$, $v_0$ and $\bz$ through the elliptic equation
\begin{equation}\label{flat bdy eq}
\red{-\lap_{A_0}  q_0=(\nab_{A_0} v_0):(\nab_{A_0} v_0)-(\nab_{A_0} \bz):(\nab_{A_0} \bz)},\quad \text{in}\,\, \Omega, 
\end{equation}
with boundary conditions
\begin{equation}\label{flat bdy cond}
\begin{aligned}
q_0=\red{\sigma \mathcal{H}_0},\quad \text{on}\,\,\,\Gamma,\\
\frac{\p q_0}{\p N}=0,\quad \text{on}\,\,\,\Gamma_0.
\end{aligned}
\end{equation}
 
Note that the Neumann boundary condition  follows from restricting the normal component of the momentum equation to $\Gamma_0$. In \cite{luozhangMHDST3.5}, the second and third authors proved the local a priori energy estimate of \eqref{MHDLST} for each fixed $\sigma>0$. The local well-posedness of this problem is established by us very recently in \cite{GLZ}. 

On the other hand, when $\sigma=0$, let $(\zeta, w, r)$ verifies
\begin{equation}\label{MHDL0ST}
\begin{cases}
\p_t\zeta=w~~~& \text{in}~[0,T]\times\Omega;\\
\p_tw-\bp^2\zeta+\nab_{A(\zeta)} r=0~~~& \text{in}~[0,T]\times\Omega;\\
\di_{A(\zeta)}  w =0,&\text{in}~[0,T]\times\Omega;\\
\di  \bz=0~~~&\text{in}~\{0\}\times \Omega ;\\
w^3=\bz^3=0~~~&\text{on}~\Gamma_0;\\
r=0 ~~~&\text{on}~\Gamma;\\
\bz^3=0 ~~~&\text{on}~\Gamma,\\
(\zeta,w)=(\red{\eta_0},v_0)~~~&\text{on}~\{t=0\}{\times}\overline{\Omega}.
\end{cases}
\end{equation}
The initial pressure $r_0$ is determined by the elliptic equation
\begin{equation}
\red{-\Delta_{A_0} r_0=(\nab_{A_0} v_0):(\nab_{A_0} v_0)-(\nab_{A_0} \bz):(\nab_{A_0} \bz)},\quad \text{in}\,\, \Omega,
\end{equation}
with the boundary condition $r_0=0$ on $\Gamma$ and $\p r_0/\p N=0$ on $\Gamma_0$.
Moreover, for \eqref{MHDL0ST} to be well-posed, we assume the Rayleigh-Taylor sign condition
\begin{equation}
-\p_3 r_0 \geq c_0 >0,\quad \text{on}\,\,\Gamma \label{TaylorL'}
\end{equation}
holds for some constant $c_0>0$.  
\begin{rmk}
\red{In fact, the original Rayleigh-Taylor sign condition for \eqref{MHDL0ST} is $-\nn_0\cdot\nabla_{A_0(\zeta_0)}r_0\geq c_0>0$. But since $r=0$ on $\Gamma$ implies $\TP_1r=\TP_2r=0$ on $\Gamma$, then the sign condition becomes $-\sqrt{g_0(\zeta_0)}\nn_{0\alpha}\nn_{0}^{\alpha}\p_3 r_0\geq c_0>0$. So it is reasonable to directly assume $-\p_3 r_0\geq c_0>0.$}
\end{rmk}

\subsection{The main result}
This paper aims to show that if $q_0$ verifies the Rayleigh-Taylor sign condition, i.e., 
\begin{equation}\label{TaylorL}
-\p_3 q_0 \geq c_0 >0,\quad \text{on}\,\,\Gamma,
\end{equation} 
then  the solution of the ``$\sigma>0$ problem" \eqref{MHDLST} converges to the solution of the ``$\sigma=0$ problem" \eqref{MHDL0ST} as $\sigma\rightarrow 0$. Specifically, we prove
\begin{thm} \label{main converge}
Suppose the initial data $(v_0, \bz, q_0)$ verifies:
\begin{enumerate}
\item $v_0, \bz$ are divergence-free vector fields with $v_0^3 = 0 $ on $\Gamma_0$ and  $\bz^3 =0$ on $\Gamma\cup\Gamma_0$.
\item \red{$\eta_0, v_0, \bz \in H^{5.5}(\Omega)$; $\eta_0,v_0\in H^7(\Gamma)$; the initial mean curvature $\hh_0=\Delta_{g_0}\eta_0\cdot\nn_0\in H^6(\Gamma)$.}
\item The Rayleigh-Taylor sign condition 
\begin{equation}\label{RTsign}
-\p_3 q_0 \geq c_0>0,\quad\text{on}\,\,\Gamma.
\end{equation}
\item The compatibility conditions up to the $4$-th order, where the $j$-th order ($0\leq j\leq 4$) condition reads
\begin{equation} \label{compatibility}
\begin{aligned}
\p_t^j q(0)=\sigma\p_t^{j}\mathcal{H}(0), ~~~&\text{ on }\Gamma, \\
 \p_3 \p_t^j q(0)=0,~~~&\text{ on }\Gamma_0.
 \end{aligned}
\end{equation}
\end{enumerate}
Then there exists some $T>0$ independent of $\sigma$, such that the solutions to \eqref{MHDLST} and \eqref{MHDL0ST} exists, and as $\si\to 0$, we have
\begin{align}
(v^\sigma,\bp\eta^{\sigma}, q^\sigma)\xrightarrow{\red{C^1([0,T]\times\Omega)}} (w,\bp\zeta, r).
\end{align}
\end{thm}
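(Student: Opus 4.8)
The plan is to prove Theorem~\ref{main converge} by establishing \textit{uniform-in-$\sigma$} a~priori energy estimates for the $\sigma>0$ problem \eqref{MHDLST}, and then passing to the limit by a compactness/uniqueness argument. First I would set up the higher-order energy functional
\[
E^\sigma(t)=\sum_{k=0}^{4}\left(\|\p_t^{k}v^\sigma(t)\|_{H^{4-k+0.5}(\Omega)}^2+\|\p_t^{k}\bp\eta^\sigma(t)\|_{H^{4-k+0.5}(\Omega)}^2\right)+\sigma\,\big(\text{curvature energy on }\Gamma\big)+\text{tangential boundary energy},
\]
mirroring the energy used in \cite{luozhangMHDST3.5, GLZ} but tracking carefully which pieces carry a factor $\sigma$ in front of them; the non-surface-tension part must be controlled by a constant independent of $\sigma$, while the surface-tension part is allowed to depend on $\sigma$ only through $\sigma\|\cdot\|^2$ (which is then harmless). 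The Rayleigh-Taylor sign condition \eqref{TaylorL} enters precisely here: the boundary term produced when one integrates the pressure gradient against the tangentially-differentiated velocity has a definite sign modulo $\sigma$-weighted curvature terms, and $-\p_3 q_0\geq c_0$ (shown to propagate on a $\sigma$-independent time interval) is what makes that boundary term coercive. The second main ingredient is the use of \textbf{Alinhac good unknowns}: instead of differentiating \eqref{MHDLST} directly, I would commute $\TP^{\,s}$ (and $\p_t$) through the equations using the good unknowns $\VV:=\TP^{\,s}v^\sigma-(\TP^{\,s}\eta^\sigma\cdot\nab_{A})v^\sigma$ and similarly for $\bp\eta^\sigma$ and $q^\sigma$; this cancels the worst (highest-order, non-symmetrizable) commutator terms coming from $A$, reducing the error to lower-order expressions controlled by $E^\sigma$.

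The heart of the argument, and what I expect to be the main obstacle, is controlling the top-order boundary contributions. After applying the good-unknown machinery, integration by parts over $\Omega$ leaves boundary integrals on $\Gamma$ of the schematic form $\int_\Gamma (\text{good unknown of }q^\sigma)\,(A^{3\alpha}N_\mu \cdots)\,\dS$, into which one substitutes the dynamic boundary condition $A^{3\alpha}q^\sigma=-\sigma\sqrt{g^\sigma}\Delta_{g^\sigma}(\eta^\sigma)^\alpha$. The surface-tension piece gives a positive-definite quantity $\sigma\int_\Gamma |\TP^{\,s+1}\eta^\sigma|^2$ (up to lower order), which is fine, but the coupling of this with the magnetic term $\bp^2\eta^\sigma$ and with the Rayleigh-Taylor term produces a potentially dangerous cross term that is \emph{not} obviously of lower order and \emph{not} $\sigma$-small. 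This is where the \textbf{anti-symmetric structure on the boundary} advertised in the introduction must be exploited: one rewrites the worst boundary commutator (roughly, the term where a full tangential derivative lands on $A$ inside the curvature operator, paired against $\TP^{\,s}q^\sigma$) using the identity $\btp=\bp$ on $\Gamma$ together with $\bz^3=0$ on $\Gamma$, so that the operator $\btp$ acting tangentially is \emph{anti-symmetric} in the $L^2(\Gamma)$ pairing; integrating by parts in $\btp$ then flips the sign and the term cancels against its symmetric counterpart, leaving only $\|\cdot\|$-controlled remainders. Verifying that all such cancellations close — in particular that the magnetic field, which lives at the same regularity as $\eta^\sigma$, does not destroy the $\sigma$-uniformity — is the delicate computational core and will require a careful bookkeeping of every term of order $\geq s+1/2$.

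Once the uniform estimate $\sup_{[0,T]}E^\sigma(t)\leq C(E^\sigma(0))\leq C_0$ is in hand with $T$ and $C_0$ independent of $\sigma$ (this also requires checking that the initial energy $E^\sigma(0)$ is bounded uniformly in $\sigma$, which follows from hypotheses (1)--(4) of the theorem, the compatibility conditions ensuring $\p_t^j q(0)$ and $\p_t^j\mathcal{H}(0)$ match so that the time-differentiated data are $\sigma$-bounded), the convergence follows by a standard argument: the bound gives weak-$*$ compactness of $(\eta^\sigma,v^\sigma,\bp\eta^\sigma,q^\sigma)$ in the high-regularity space and, via Aubin--Lions and the equations, strong convergence in $C^1([0,T]\times\Omega)$ along a subsequence; the limit solves \eqref{MHDL0ST} because $\sigma\sqrt{g^\sigma}\Delta_{g^\sigma}(\eta^\sigma)^\alpha\to 0$ in the appropriate sense (its $\sigma$-weighted energy norm is bounded, so it is $O(\sqrt\sigma)$ in a lower norm), turning the dynamic boundary condition into $r=0$ on $\Gamma$. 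Finally, the local well-posedness and uniqueness of \eqref{MHDL0ST} from \cite{gu2016construction} identifies the limit uniquely, so the whole family converges, not merely a subsequence. I would also record that the propagation of the Rayleigh-Taylor sign on $[0,T]$ with a $\sigma$-independent lower bound (say $c_0/2$) is obtained from the $C^1$-type control of $\p_3 q^\sigma$ inside the energy, closing the logical loop.
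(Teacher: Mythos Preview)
Your overall architecture is right and matches the paper: uniform-in-$\sigma$ energy estimates for \eqref{MHDLST} via Alinhac good unknowns, with the Rayleigh--Taylor sign supplying the non-weighted boundary coercivity and surface tension supplying the $\sqrt{\sigma}$-weighted one, followed by compactness. The paper's energy is $E=E_1+\sigma E_2$ with $E_1$ at the $H^{5-k}$ scale and $E_2$ at $H^{5.5-k}$ (not $H^{4.5-k}$ as you wrote), but that is a minor bookkeeping point.

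The real gap is your identification of the ``anti-symmetric structure on the boundary''. You attribute it to $\btp$ being skew-adjoint on $L^2(\Gamma)$ because $\bz^3=0$; that mechanism is indeed used, but only for the \emph{interior} magnetic term, where it is routine. The genuinely dangerous boundary term in the paper has \emph{no magnetic field in it}: after introducing the good unknowns $\VV,\QQ$ and integrating by parts, the uncontrolled piece is
\[
J_1=\int_\Gamma q\,\bigl(\TP^5 A^{3\alpha}\bigr)\,\VV_\alpha\,\mathrm{d}S,
\]
coming from the commutator of $\TP^5$ with $A^{3\alpha}$ in $\TP^5(A^{3\alpha}q)$. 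Since $q=\sigma\mathcal{H}$ on $\Gamma$ and $\TP^5 A^{3\alpha}$ contains $\TP^6\eta$, this is top order and carries only one $\sigma$; there is no $\bp$ to integrate by parts. The cancellation the paper exploits (Section~\ref{sect RTSTerror}) is purely algebraic: write $A^{3\alpha}=(\TP_1\eta\times\TP_2\eta)^\alpha$, integrate $\p_t$ and then a spatial $\TP_i$ by parts, and use the vector identity $(\uu\times\vv)\cdot\ww=-(\ww\times\vv)\cdot\uu$ to show that the leading contribution reappears with the \emph{opposite} sign, so that $J_{11}=-J_{11}+\text{(controllable)}$. A further normal/tangential decomposition $\delta^{\alpha\beta}=\nn^\alpha\nn^\beta+g^{ij}\TP_i\eta^\alpha\TP_j\eta^\beta$ and a second application of the same cross-product trick are needed to close the remaining pieces against the $\sqrt{\sigma}$-weighted energy $|\sqrt{\sigma}\,\TP^6\eta\cdot\nn|_0$. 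Your $\btp$-skewness idea simply does not touch this term, so as written your proposal would stall at precisely the step the paper singles out as new.
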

\noindent We will drop the superscript $\sigma$ on $(\eta^\sigma, v^\sigma, q^\sigma)$ in the rest of this paper and simply denote them by $(\eta, v, q)$ when no confusion can be raised. 

\nota (The Sobolev norms) Let $f=f(t,y)$ be a smooth function on $[0,T]\times\Omega$ and $g=g(t,y)$ be a smooth function on $[0,T]\times \Gamma$.  We define $\|f\|_{s}: = \|f(t,\cdot)\|_{H^s(\Omega)}$ and $|g|_{s}: = |g(t,\cdot)|_{H^s(\Gamma)}$ throughout the rest of this paper. 

Theorem \ref{main converge} is a direct consequence of the following theorem where we establish an energy estimate of the $\sigma>0$ problem that is uniform as $\sigma\to 0$ via the standard compactness argument, which is also proved in section \ref{sect 0STlimit}.
\begin{thm}\label{main energy}
Let $(v_0, \bz, q_0)$ be the initial data given in Theorem \ref{main converge}. Let
\begin{equation}\label{energy}
E(t)=E_1(t)+\sigma E_2(t), 
\end{equation}
with
\begin{equation}\label{energy1}
E_1(t):=\left\|\eta (t)\right\|_{5}^2+\sum_{k=0}^5\left(\left\|\p_t^k v(t)\right\|_{5-k}^2+\left\|\p_t^k \bp\eta(t)\right\|_{5-k}^2\right)+\left|\TP^5\eta(t)\cdot\nn(t)\right|_0^2
\end{equation}
and
\begin{equation}\label{energy2}
E_2(t):=\left\|\eta(t)\right\|_{5.5}^2+\sum_{k=0}^4\left(\left\|\p_t^k v(t)\right\|_{5.5-k}^2+\left\|\p_t^k \bp\eta(t)\right\|_{5.5-k}^2\right)+\sum_{k=0}^5\left|\TP^{6-k}\p_t^k\eta(t)\cdot\nn(t)\right|_0^2+\left|\TP^5\bp\eta(t)\cdot\nn(t)\right|_0^2,
\end{equation}
where $\nn$ is the Eulerian unit normal vector.
\red{Then} 
\begin{equation}
\red{E(t) \leq P(E(0)),}\label{energy estimate E(t)}
\end{equation}
\red{where $P$ is a generic polynomial in its arguments. Also, 
there exists a constant
$$
\mathcal{C} = \mathcal{C}\left(c_0,\|\eta_0,v_0,\bz\|_5,\,\wt\|\eta_0,v_0,\bz\|_{5.5},\,\sum_{k=0}^{3}\sigma^{1+\frac{k}{2}}\left|\eta_0,v_0\right|_{5.5+\frac{k}{2}},\,\sum_{k=0}^{3}\sigma^{1+\frac{k}{2}}|\Delta_{g_0}\eta_0\cdot\nn_0|_{4.5+\frac{k}{2}}\right)>0
$$
such that}
\begin{align}
\red{E(t) \leq \mathcal{C}, \quad \forall t\in [0,T].} \label{data representation}
\end{align}

\end{thm}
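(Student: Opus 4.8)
Since Theorem \ref{main converge} is deduced from the $\sigma$-uniform bounds \eqref{energy estimate E(t)}--\eqref{data representation} by the compactness argument of Section \ref{sect 0STlimit}, the task is to prove those two bounds with all constants independent of $\sigma\in(0,1]$. I would first rewrite \eqref{MHDLST} as a second-order evolution problem for $\eta$: with $b=\bp\eta$ and $\bz\cdot N=0$ on $\p\Omega$ (since $\bz^3=0$ there), the momentum equation is $\p_t v-\bp^2\eta+\nab_A q=0$, in which $\bp^2\eta$ is a tangential wave operator whose energy contribution, after integrating by parts, is the manifestly nonnegative $\tfrac12\frac{d}{dt}\|\bp(\,\cdot\,)\|_0^2$ \emph{with no boundary term}, precisely because $\bz\cdot N$ vanishes on all of $\p\Omega$. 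The estimates are carried out on the equations obtained by applying the operators $D\in\{\p_t^k\TP^{s}:k+s\le 5\}$, and the half-order-higher tangential operators for the $\sigma$-weighted part, using the compatibility conditions \eqref{compatibility} to control the traces of the time-differentiated data.

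The key algebraic device is the pair of Alinhac good unknowns: for each $D$ set $\VV:=Dv-(D\eta)\cdot\nab_A v$ and $\QQ:=Dq-(D\eta)\cdot\nab_A q$, so that $D(\nab_A^\alpha q)=\nab_A^\alpha\QQ+(\text{energy-level commutator})$ and $D(\di_A v)=\di_A\VV+(\text{energy-level commutator})$ --- the worst term, where $D$ falls on $A\sim\p\eta$, having been absorbed into $\QQ,\VV$. Testing the good-unknown momentum equation $\p_t\VV-\bp^2(D\eta)+\nab_A\QQ=(\text{controllable})$ against $\p_t(D\eta)$ produces $\tfrac12\frac{d}{dt}(\|\VV\|_0^2+\|\bp D\eta\|_0^2)$ plus a boundary integral on $\Gamma$ from integrating $\nab_A\QQ$ by parts (using Piola \eqref{piola}). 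The interior Sobolev norms are reconstructed by the standard div-curl-plus-elliptic scheme: $\di v$ is controlled from $\di_A v=0$, $\text{curl}\,v$ from the transport equations for $\text{curl}_A v$ and $\text{curl}_A b$, and the full norms from the Hodge-type elliptic estimate on $\Omega$ with the recovered boundary traces. The pressure itself solves $-\lap_A q=(\nab_A v):(\nab_A v)-(\nab_A\bp\eta):(\nab_A\bp\eta)$ in $\Omega$, with $q=\sigma\hh$ on $\Gamma$ and $\p q/\p N=0$ on $\Gamma_0$, so one needs the \emph{uniform-in-$\sigma$} elliptic bound: the Dirichlet datum $\sigma\hh=\sigma\sqrt g\,\lap_g\eta\cdot\nn$ carries two extra derivatives of $\eta$ but with the prefactor $\sigma$, hence it is absorbed exactly by the $\sigma E_2$ terms of \eqref{energy2}. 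Time derivatives are traded for spatial ones through the momentum equation and the constraint $\di_A v=0$ in the usual way.

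The heart of the matter, and the step I expect to be hardest, is the boundary integral on $\Gamma$, schematically $\int_\Gamma A^{3\alpha}\,\QQ\,\p_t(D\eta)_\alpha$ together with the terms where $D$ falls on $A^{3\alpha}$ in the boundary condition $A^{3\alpha}q+\sigma\sqrt g\,\lap_g\eta^\alpha=0$. This must be split so that the Rayleigh-Taylor contribution and the surface-tension contribution are \emph{simultaneously} sign-definite with constants independent of $\sigma$: from the $\p_3 q$ part one extracts $\tfrac12\frac{d}{dt}\int_\Gamma(-\p_3 q)\,|\TP^{s}\eta\cdot\nn|^2$, which, once the Taylor sign is propagated, furnishes the boundary term $|\TP^5\eta\cdot\nn|_0^2$ of $E_1$ (and its mixed $\sigma$-weighted analogues in $E_2$); while from the $\sigma\sqrt g\,\lap_g$ part, after one integration by parts in $\TP$, one extracts $\tfrac\sigma2\frac{d}{dt}\int_\Gamma\sqrt g\,g^{ij}\TP_i(D\eta)^\alpha\TP_j(D\eta)_\alpha$, nonnegative since $g$ is a metric, furnishing the $\sigma$-weighted surface norms of $E_2$. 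The obstruction is the residual cross and commutator terms: commuting $D$ past $\sqrt g\,\lap_g$ generates terms carrying as many derivatives as the surface-tension energy, which therefore cannot be absorbed crudely. Here I would exploit the anti-symmetric structure highlighted in the abstract: the cofactor matrix $A$ has the $\epsilon$-antisymmetric representation \eqref{A represent}, and so do the top-order parts of its tangentially-differentiated boundary traces, which forces the dangerous bilinear expressions to be antisymmetric in their two highest-order factors and hence either to cancel or to collapse to an exact $\frac{d}{dt}$ after a further integration by parts along the closed surface $\Gamma$. One also uses the magnetic boundary condition $A^{\mu\nu}b_\nu N_\mu=0$ on $\Gamma$ to dispose of the $\Gamma$-contribution of the wave term $\bp^2(D\eta)$.

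Summing the interior and boundary estimates over all admissible $(k,s)$ gives $\sup_{[0,T]}E(t)\le P(E(0))+T\,P\!\left(\sup_{[0,T]}E(t)\right)$ with $P$ independent of $\sigma$; choosing $T$ small (depending only on the data) then yields \eqref{energy estimate E(t)}. Throughout this one runs a bootstrap on the Rayleigh-Taylor sign: since $\p_3 q$ is Lipschitz in $t$ with a constant controlled by $E$, the hypothesis $-\p_3 q\ge c_0/2$ on $[0,T]$ is self-consistent for $T$ small depending only on $c_0$ and $E(0)$, which is what legitimizes using the Taylor boundary term as part of the energy. Tracking the dependence of $T$ and of the resulting bound on the initial quantities then produces \eqref{data representation} with $\mathcal{C}$ of the stated form, and Theorem \ref{main converge} follows by the $\sigma$-uniformity through Section \ref{sect 0STlimit}.
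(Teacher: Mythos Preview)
Your overall architecture is right --- div--curl reduction, tangential energy via Alinhac good unknowns, boundary decomposition into a Rayleigh--Taylor part (yielding $|\TP^5\eta\cdot\nn|_0^2$) and a surface-tension part (yielding the $\sqrt\sigma$-weighted boundary energies), and the anti-symmetric cancellation for the top-order cross terms coming from $\TP^5 A^{3\alpha}$. That is exactly the skeleton of the paper's proof.

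There is, however, a genuine gap in your pressure estimate. You propose to control $q$ (and its time derivatives) via the elliptic equation with the \emph{Dirichlet} datum $q|_\Gamma=\sigma\hh$, claiming that the extra two derivatives in $\hh$ are ``absorbed exactly by the $\sigma E_2$ terms.'' They are not. To get $\|q\|_5$ this way you would need $|\sigma\hh|_{4.5}$, whose top order is $\sigma|\TP^{6.5}\eta\cdot\nn|_0$ (since $\hh\sim Q(\TP\eta)\TP^2\eta\cdot\nn$); but $\sigma E_2$ only supplies $\sqrt\sigma\,\|\eta\|_{5.5}$ and $\sqrt\sigma\,|\TP^6\eta\cdot\nn|_0$, which is half a derivative and a factor $\sqrt\sigma$ short. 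The same deficit recurs for $\|\p_t^k q\|_{5-k}$. The paper avoids this by using the \emph{Neumann} condition obtained from the normal component of the momentum equation, $A^{3\alpha}A^{\mu}{}_{\alpha}\p_\mu q = A^{3\alpha}(\p_t v-\bp^2\eta)_\alpha$ on $\Gamma$, together with the divergence-form rewriting so that $h+\pi\cdot N=0$ and a low-regularity $H^1$-type elliptic estimate (Lemma~\ref{H1elliptic}); normal derivatives of $q$ are then reduced to tangential ones through the identity \eqref{qnormal}. This is precisely what forces the full ladder of time derivatives into the energy (the Neumann datum contains $\p_t v$), which you included but without this justification. The Dirichlet route \emph{is} used, but only in the Appendix to express $E(0)$ in terms of the data --- and that is why the constant $\mathcal C$ carries the higher-order boundary norms of $\eta_0,v_0,\hh_0$.

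Two smaller points. First, the paper invokes the good unknowns \emph{only} for the pure spatial case $D=\TP^5$; for $D$ containing at least one $\p_t$ the commutator $[\p_t^k,A]$ costs no extra spatial regularity (since $\p_t A$ has the same $H^s$ regularity as $A$), so the estimate proceeds directly and the delicate boundary work is instead the $I_{12}$--$I_6$ cancellation and the Coutand--Shkoller determinant identity for $I_{112}$. Second, the surface-tension boundary energy is not the full $\sqrt g\,g^{ij}\TP_i(D\eta)^\alpha\TP_j(D\eta)_\alpha$ as you wrote, but its \emph{normal projection} $\sqrt g\,g^{ij}(\TP_i D\eta\cdot\nn)(\TP_j D\eta\cdot\nn)$ (cf.\ \eqref{lapg}, \eqref{ST111}); this matters because only the normal trace appears in $E_2$, and the tangential components must be disposed of via the cancellation \eqref{ST1134} between $\p_t(\sqrt g\,\nn^\alpha)=-A^{3\gamma}\p_\mu v_\gamma A^{\mu\alpha}$ and the correction term in $\VV$.
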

Theorem \ref{main energy} states that given the initial data for the $\sigma=0$ problem, we want to prove the a priori energy estimate for the $\sigma>0$ problem, and such energy estimate has to be uniform in $\sigma$ as $\sigma$ tends to zero. That said, it is natural to come up with the energy \eqref{energy}, in which $E_1(t)$ corresponds to the $\sigma=0$ problem and $E_2(t)$ corresponds to the $\sigma>0$ problem. Also, $E(t)$ reduces to $E_1(t)$ when $\sigma=0$.  

\red{Let $\PP_0:=P(E(0))$. The energy estimate \eqref{energy estimate E(t)} is a direct consequence of 
\[
E(T) \leq \PP_0 +\epsilon E(T) + P(E(T))\int_0^T P(E(t))\dt
\] 
together with the Gr\"onwall's inequality.  }

\begin{rmk}
\red{The reason for us to require $\eta_0$ and $v_0$ to have higher boundary regularity is that we need to express $E(0)$, consisting of
\[
\p_t^k v(0), \quad \p_t^k b(0),\quad 1\leq k\leq 5,\quad \text{in}\,\,\,\,H^{5-k},
\]
\[
\sqrt{\sigma}\p_t^k v(0), \quad \sqrt{\sigma}\p_t^k b(0),\quad 1\leq k\leq 4,\quad \text{in}\,\,\,\,H^{5.5-k},
\]
 and 
\[
\p_t^\ell q(0),\quad 0\leq \ell\leq 4,\quad \text{in}\,\,\,\,H^{5-\ell}, 
\]
\[
\sqrt{\sigma}\p_t^\ell q(0),\quad 0\leq \ell\leq 3,\quad \text{in}\,\,\,\,H^{5.5-\ell}, 
\]
 in terms of $\eta_0$, $v_0$ and $\bz$.}

 \red{Specifically, the boundary regularity for $\eta_0$ is required when solving the initial data $\|v_t(0)\|_4\lesssim\|q_0\|_5+\|\bz\|_4\|\bz\|_5$ and $\|\sqrt{\sigma} v_t(0)\|_{4.5}\lesssim\|\sqrt{\sigma}q_0\|_{5.5}+\|\bz\|_{4.5}\|\sqrt{\sigma} \bz\|_{5.5}$. Then $q_0$ is determined by the elliptic equation with \textit{Dirichlet} boundary condition\footnote{\red{We cannot employ the Neumann boundary condition here since this would generate $v_{tt}(0)$. }} 
 on $\Gamma$
\[
\begin{cases}
\Delta_{A(0)}q_0=\nabla_{A(0)}^{\alpha} v_{0,\beta}\nabla_{A(0)}^{\beta} v_{0,\alpha}-\nabla_{A(0)}^{\alpha} b_{0,\beta}\nabla_{A(0)}^{\beta} b_{0,\alpha}  ~~~&\text{ in }\Omega\\
q_0=-\sigma\Delta_{g_0}\eta_0\cdot\nn_0~~~&\text{ on }\Gamma\\
\dfrac{\p q_0}{\p N}=0~~~&\text{ on }\Gamma_0.
\end{cases}
\]
The elliptic estimates for $\|q_0\|_5$ and $\|\sqrt{\sigma}q_0\|_{5.5}$ require the bounds for $|\sigma\Delta_{g_0}\eta_0\cdot\nn_0|_{4.5}$ and $|\sigma^{3/2}\Delta_{g_0}\eta_0\cdot\nn_0|_5$. Similarly, when solving $\p_{tt} v(0)$ in $H^3(\Omega)$ and $\sqrt{\si} \p_{tt} v(0)$ in $H^{3.5}(\Omega)$, we need the bounds for $\sigma | \p_t (\lap_{g} \eta \cdot \nn )(0)|_{3.5}$ and $\sigma^{\frac{3}{2}} |\p_t (\lap_g \eta\cdot \nn)(0)|_{4}$. These contribute to $\sigma |v_0|_{5.5},\sigma^{\frac{3}{2}} |v_0|_6$ and $\sigma |\eta_0|_{5.5},\sigma^{\frac{3}{2}} |\eta_0|_6$ at the leading order. }

\red{By continuing this process, we need to study the equations verified by $\p_t^k v(0)$, $k\geq 3$, where the Dirichlet boundary condition involves $\lap_{g_0} \p_t^\ell v\cdot \nn_0$, $\ell\geq 1$, and this introduces higher-order norms of $q$ (and its time derivatives). Then one has to perform higher-order elliptic estimates for $q$ (and its time derivatives) with the Dirichlet boundary condition. This results in the higher order boundary norms of $\eta_0$ and $v_0$ in the constant $\mathcal{C}$.} The detailed arguments to can be found in Appendix \ref{appendix}.  
\end{rmk}
 
\begin{rmk}
\red{Note that $\nn(0) = (0,0,1)^\top$ and $g_0 = \delta$ when $\eta_0=\id$. Thanks to this,  the higher-order boundary terms of requiring only the third component of $\eta_0$ and $v_0$ in the constant $\mathcal{C}$. This agrees with the constant $C_0$ introduced in  \cite[Theorem 2.3]{DK2017}.}
\end{rmk}
 
 \begin{rmk}
\red{The initial data $v_0, \bz,\eta_0$ are the same for both $\sigma>0$ and $\sigma=0$ problems, but $q_0^\sigma \neq r_0$ since $q_0^\sigma =\sigma \mathcal{H}_0 \neq 0$ on $\Gamma$.} Therefore, the initial data for the $\sigma>0$ problem should be $(\eta_0, v_0, \bz, q_0^\sigma)$, where $q_0^\sigma$ verifies \eqref{TaylorL} for all $\sigma>0$ and $q_0^\sigma \rightarrow r_0$ as $\sigma\rightarrow 0$. 
\end{rmk}

\subsection{Key new difficulties and comparison with the existing literature}
\subsubsection{Difference between Euler and MHD equations}
The zero surface tension limit for the free-boundary (compressible) Euler equations is studied in \cite{coutand2013LWP}, but the method developed there does not apply to the free-boundary MHD equations. Their treatment depends on the assumption that $\eta$ is $1/2$-derivatives more regular in the interior than $v$ when passing $\sigma$ to $0$. This can be done by assuming the vorticity is more regular initially\footnotemark and this extra regularity can then be carried over to $\eta$ through the Cauchy invariance.  However, in MHD equations, the coupling between magnetic field and velocity denies the possibility of the higher regularity assumption on the vorticity and thus on $\eta$. This is due to that the Lorentz force (i.e., $B\cdot \nab B$) in the momentum equation destroys the Cauchy invariance. 
\footnotetext{It is also well-known that in Euler equations, the extra regularity assumption on vorticity can be propagated to a later time. }

On the other hand, we mention here that the zero surface tension limit of $2$D Euler equations was established by Ambrose-Masmoudi \cite{AM2005} but without vorticity. In this paper, we develop a unified framework for proving the zero surface tension limit for both incompressible Euler and MHD equations in $3$D, without imposing the irrotational assumption\footnotemark on $v_0$. That is, we can recover the zero surface tension limit of the $3$D Euler equations by setting $\bz=0$. 
\footnotetext{Physically, the Lorentz force twists the velocity field and thus introduces vorticity. In consequence, the irrotational assumption on the velocity becomes inadequate in MHD equations. }

\subsubsection{Treating the higher-order interior and boundary terms}
The aforementioned regularity issue prevents us from commuting $\TP^5$ with the equation 
$$
\p_t v - \bp^2 \eta +\nab_A q =0,
$$
because we are unable to control the term generated when all derivatives land on $A$. Our method to overcome this issue is to invoke the so-called Alinhac good unknowns for the leading order terms involving $v$ and $q$ (i.e., \eqref{AGU intro}).  The structure of these good unknowns is tied to the covariant differentiation in the Eulerian coordinates, and thus the higher-order term will not show up. Nevertheless, the use of good unknowns alone cannot fully resolve the problem, as the surface tension introduces higher-order boundary terms in the energy estimate (i.e., the integrals on the RHS of \eqref{higher-order terms intro}). None of these integrals can be controlled directly, but fortunately, we can overcome this issue by exploiting the structure of the equations as well as that of the good unknowns to generate cancellation schemes. We refer to Section \ref{sect. AGU intro} for the detailed explanations.

\subsubsection{Application to other free-boundary fluid models}
The method developed in this manuscript is applicable to study the zero surface limit problem for Euler equations without the extra regularity assumption on $\eta$. In particular, Theorem \ref{main energy} yields the uniform-in-$\sigma$ energy estimate for the free boundary incompressible Euler equations by setting $\bz=0$. 
Also, it is possible to adapt our method to study the zero surface tension limit in other complex fluid models, where no extra regularity assumption can be made on the flow map $\eta$. 

\section{Strategy of the proof and some auxiliary results}
\subsection{Proof of Theorem \ref{main energy}: An overview}

\subsubsection {Necessity of time derivatives} 
The energy \eqref{energy} consists of mixed space and time derivatives, and this is required as a result of the estimate of the pressure $q$. Specifically, we cannot equip the elliptic equation verified by $q$ with Dirichlet boundary condition when $\sigma>0$ (see, e.g.,  \cite{christodoulou2000motion,gu2016construction}). Instead, we should impose the Neumann boundary condition, which contains the time derivative of $v$ and thus forces us to analyze all the time derivatives of the variables $v$ and $b=\bp\eta$. 

\subsubsection{Interior tangential estimates: Alinhac good unknowns and cancellation structures} \label{sect. AGU intro}

The non-weighted full Sobolev norms are analyzed via div-curl analysis. The curl part can be controlled via the evolution equation of the Eulerian vorticity. As for tangential estimates, if the tangential derivatives contain at least one time derivatives, one may follow the ideas in our previous work \cite{luozhangMHDST3.5, GLZ} to close the energy estimates and we refer to Section \ref{sect tgmix} for the proof. 

However, when the tangential derivatives are purely spatial, we have to introduce the Alinhac good unknowns to proceed with the energy estimates because we cannot directly commute $\TP^5$ with the covariant derivative $\pa$ falling on $q$ or $v$. Instead, we rewrite the term $\TP^5 (\pa q)$ in terms of the covariant derivative of the Alinhac good unknowns plus a controllable ``error" term. See Section \ref{sect tgs5} for the proof. This remarkable observation was first due to Alinhac \cite{Alinhacgood89}. In the study of free-surface fluid, it is first (implicitly) applied to the $Q$-tensor energy introduced by Christodoulou-Lindblad \cite{christodoulou2000motion}. Here, following \cite[Section 4]{gu2016construction}, the Alinhac good unknowns of $v$ and $q$ with respect to $\TP^5$ are 
\begin{align}
\VV:=\TP^5v-\TP^5\eta\cdot\pa v,~~\QQ:=\TP^5 q-\TP^5\eta\cdot\pa q. \label{AGU intro}
\end{align}
Under this setting, it suffices to analyze the evolution equation of the good unknowns to derive the $\TP^5$-estimates.

Compared with the ``$\sigma=0$ problem", the boundary integral appearing in the tangential estimates, which reads $J:=-\ig A^{3\alpha}\QQ\VV_{\alpha}\dS$, becomes very difficult because the top order derivative of the pressure $\TP^5 q$ no longer vanishes due to the presence of surface tension. It contributes to
\red{\begin{align}
-\ig A^{3\alpha}\TP^5 q\VV_{\alpha}dS=&-\ig\TP^5(A^{3\alpha} q)\VV_{\alpha}\dS+\ig q(\TP^5 A^{3\alpha})\VV_{\alpha}\dS+\ig A^{3\alpha}(\TP^5\eta\cdot\pa q)\VV_{\alpha}\dS.+\text{lower order terms} \label{higher-order terms intro} \\
=&:\ST+J_1+\RT+\text{lower order terms},  \nonumber
\end{align} }

Due to the presence of $\p_3 q$, the term$\RT$ can be directly controlled by invoking the Rayleigh-Taylor sign condition and a standard cancellation structure of the Alinhac good unknowns which relies on the simple identity $\p_t A^{3\alpha}=-A^{3\gamma}\p_{\mu}v_{\gamma}A^{\mu\alpha}$ and also appears in the ``$\sigma=0$ problem". This part contributes to the non-weighted boundary energy $|\TP^5\eta\cdot\nn|_0^2$ that exactly controls the regularity of the second fundamental form of the free surface. The term$\ST$ contributes to the $\wt$-weighted energy $|\wt\TP^6\eta\cdot\nn|_0^2$ after plugging the surface tension equation $A^{3\alpha}q=-\sigma\sqrt{g}\Delta_g\eta^{\alpha}$ and integrating by parts, where the error terms can be either directly controlled or eliminated by using the structure of the good unknown $\VV$ (e.g., \eqref{ST1134}).

Finally, $\TP^5 A^{3\alpha}$ in $J_1$ has the top order contribution $\TP^6\eta\times \TP\eta$ that cannot be directly controlled. To overcome this difficulty, we write $A^{3\alpha}=(\TP_1\eta \times\TP_2\eta)^\alpha$ and use it to observe a crucial symmetric structure on the boundary, while the 2D version of this symmetric structure was developed in \cite{GuLeielastoST} and plays an important role in the local well-posedness of free-surface incompressible elastodynamics.

We illustrate the observation of symmetric structure briefly. By plugging  $A^{3\alpha}=(\TP_1\eta \times\TP_2\eta)^{\alpha}$ in $J_1$, the highest order terms are all of the form
\[
J_{11}=\ig\sigma\hh(\TP^5\TP_1\eta\times\TP_2\eta)\cdot\TP^5v\dS,
\]which cannot be controlled directly. To resolve this, we re-express $J_{11}$ as
\begin{equation*}
	\begin{split}
            J_{11}=&\dfrac{d}{dt}\ig\sigma\hh(\TP^5\TP_1\eta\times\TP_2\eta)\cdot\TP^5\eta\dS-\ig\sigma\hh(\TP^5\TP_1 v\times\TP_2\eta)\cdot\TP^5 \eta\dS-\ig\sigma\hh(\TP^5\TP_1 \eta\times\TP_2v)\cdot\TP^5 \eta\dS\\
                   &-\ig\sigma\p_t\hh(\TP^5\TP_1 \eta\times\TP_2\eta)\cdot\TP^5 \eta\dS\\
                   =&\dfrac{d}{dt}\ig\sigma\hh(\TP^5\TP_1\eta\times\TP_2\eta)\cdot\TP^5\eta\dS\underbrace{+\ig\sigma\hh(\TP^5v\times\TP_2\eta)\cdot\TP^5\TP_1 \eta\dS}_{\mathfrak A}+\ig\sigma(\TP^5v\times\TP_1(\hh\TP_2\eta))\cdot\TP^5 \eta\dS\\&-\ig\sigma\hh(\TP^5\TP_1 \eta\times\TP_2v)\cdot\TP^5 \eta\dS-\ig\sigma\p_t\hh(\TP^5\TP_1 \eta\times\TP_2\eta)\cdot\TP^5 \eta\dS
	\end{split}
\end{equation*}
Although we do not have control for $\mathfrak A$, by the antisymmetry of the vector identity $(\uu\times\vv)\cdot\ww=-(\uu\times\ww)\cdot\vv$, we have
$$(\TP^5v\times\TP_2\eta)\cdot\TP^5\TP_1 \eta=-(\TP^5\TP_1\eta\times\TP_2\eta)\cdot\TP^5v,$$
and thus $\mathfrak A=-J_{11}$. This implies
\begin{equation*}
        \begin{split}
                J_{11}=&\dfrac{1}{2}\dfrac{d}{dt}\ig\sigma\hh(\TP^5\TP_1 \eta\times\TP_2\eta)\cdot\TP^5\eta\dS+...
        \end{split}    
\end{equation*}
Nevertheless, the first term on the RHS does not have a positive sign. In consequence, we need to control it by the $\wt$-weighted energy $|\wt\TP^6\eta\cdot\nn|_0^2$. To achieve this, we need to invoke the decomposition $\delta^{\alpha\beta}=\nn^{\alpha}\nn^{\beta}+\tau^{\alpha}\tau^{\beta}=\nn^{\alpha}\nn^{\beta}+g^{ij}\TP_i\eta_{\alpha}\TP_j\eta_{\beta}$, where $\tau$ denotes the unit Eulerian tangential vector, and use the vector identity $\uu\times(\vv\times\ww)=(\uu\cdot\ww)\vv-(\uu\cdot\vv)\ww$ to obtain good cancellations of the error terms.  We refer to section \ref{sect RTSTerror} for the details.

\begin{rmk}
The aforementioned cancellation schemes appear to be crucial when treating the higher-order terms$\ST$ and $J_1$. Nevertheless, neither of these terms would appear in the case when Alinhac good unknowns are not needed. For instance, the good unknowns are not employed when proving the local existence of the $\sigma>0$ problem with fixed $\sigma$ in \cite{GLZ} since we do not need to consider the energy $E_1(t)$ (defined in \eqref{energy1}). In addition to this, although the zero surface tension limit is proved in \cite{coutand2013LWP}, no good unknowns are required owing to the extra regularity assumption on $\eta$. 
\end{rmk}

\subsubsection{Necessity of weighted energy and the control via surface tension} \label{sect. weighted energy intro}

In the tangential estimates, especially in the boundary integrals, many terms have $5$ derivatives weighted by the surface tension coefficients. Therefore, it is reasonable to include the weighted $H^{5.5}$-energy $\sigma E_2(t)$ to control these boundary terms via the trace lemma. \red{To control the weighted higher-order energy $\sigma E_2(t)$, we again do the div-curl decomposition, but no longer convert the $\wt$-weighted normal trace terms to the interior tangential estimates. Instead, we notice that the boundary energies contributed by the surface tension in the \textit{non-weighted tangential estimates} are exactly the $\wt$-weighted Eulerian normal traces with the same order as those $\wt$-weighted normal traces. Hence, the energy estimates for $E(t)=E_1(t)+\sigma E_2(t)$ are closed. The detailed discussion can be found in Section \ref{sect weighted}.}

\subsection{The auxiliary results}\label{prelemma}
In this subsection, we record some well-known results that will be used frequently (and sometimes silently) throughout the rest of this manuscript. 
\subsubsection{Geometric identities}
\begin{lem}\label{geometric}
Let $\nn$ be the unit outer normal to $\eta(\Gamma)$ and $\mathcal{T},\mathcal{N}$ be the tangential and normal bundle of $\eta(\Gamma)$ respectively. Denote $\Pi:\mathcal{T}|_{\eta(\Gamma)}\to\mathcal{N}$ to be the canonical normal projection. Denote $\TP_A$ to be $\p_t$ or $\TP_1,\TP_2$. Then we have the identities
\begin{align}
\label{n} \nn := n\circ\eta=&\frac{A^\top N}{|A^\top N|},\\
\label{an}|A^\top N|=&|(A^{31},A^{32},A^{33})|=\sqrt{g},\\
\label{Pi} \Pi^{\alpha}_{\lambda}=&\nn^{\alpha}\nn_{\lambda}=\delta^{\alpha}_{\lambda}-g^{kl}\TP_k\eta_\alpha\TP_l\eta_\lambda,\\
\label{Pipro} \Pi^{\alpha}_{\lambda}=&\Pi^{\alpha}_{\mu}\Pi^{\mu}_{\lambda},\\
\label{lapg0} -\Delta_g(\eta^{\alpha}|_{\Gamma})=&\mathcal{H}\circ\eta \nn^\alpha, \\
\label{lapg}\sqrt{g}\Delta_g\eta^{\alpha}=&\sqrt{g}g^{ij}\Pi^{\alpha}_{\lambda}\TP_i\TP_j\eta^{\lambda}=\sqrt{g}g^{ij}\TP_i\TP_j\eta^{\alpha}-\sqrt{g}g^{ij}g^{kl}\TP_k\eta^{\alpha}\TP_l\eta^{\mu}\TP_i\TP_j\eta_{\mu},\\
\label{tplapg}\TP_{A}(\sqrt{g}\Delta_g\eta^{\alpha})=&\TP_i\bigg(\sqrt{g}g^{ij}\Pi^{\alpha}_{\lambda}\TP_A\TP_j\eta^{\lambda}+\sqrt{g}(g^{ij}g^{kl}-g^{ik}g^{lj})\TP_j\eta^{\alpha}\TP_k\eta_{\lambda}\TP_A\TP_l\eta^{\lambda}\bigg),\\
\label{tpn} \TP_{A} \nn_{\mu} =& -g^{kl}\cp_k \TP_{A} \eta^\tau \nn_{\tau} \cp_l \eta_\mu,\\
\label{pt ggij} \p_t (\sqrt{g}g^{ij})=& \sqrt{g} (g^{ij}g^{kl}-2g^{lj}g^{ik})\TP_k v^\lambda \TP_l \eta_\lambda,\\
\label{tp ggij} \TP(\sqrt{g}g^{ij})=&\sqrt{g}\left(\frac12 g^{ij}g^{kl}-g^{ik}g^{jl}\right)(\underbrace{\TP\TP_k\eta^{\mu}\TP_l\eta_{\mu}+\TP_k\eta^{\mu}\TP\TP_l\eta_{\mu}}_{=\TP g_{kl}}).
\end{align}
\end{lem}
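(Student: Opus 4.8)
\textbf{Proof proposal for Lemma \ref{geometric}.}

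The plan is to verify each identity by direct computation from the definition $A = [\p\eta]^{-1}$ together with the explicit cofactor formula \eqref{A represent}; many of these are standard facts about induced metrics, but listing the argument organizes the bookkeeping used silently later. First I would establish \eqref{n}--\eqref{an}: writing $A^{3\alpha} = \epsilon^{\alpha\lambda\tau}\TP_1\eta_\lambda\TP_2\eta_\tau = (\TP_1\eta\times\TP_2\eta)^\alpha$ (using that on $\Gamma$ the $y_3$-direction is normal so the relevant row of $A$ involves only $\TP_1,\TP_2$), this vector is orthogonal to both $\TP_1\eta$ and $\TP_2\eta$, hence normal to $\eta(\Gamma)$; its Euclidean length squared is $|\TP_1\eta|^2|\TP_2\eta|^2 - (\TP_1\eta\cdot\TP_2\eta)^2 = \det(g_{ij}) = g$ by Lagrange's identity, which gives both \eqref{an} and, after normalization, \eqref{n}. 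For \eqref{Pi} I would use that $\{g^{ij}\TP_j\eta\}$ is the dual basis to $\{\TP_i\eta\}$ in the tangent plane, so $g^{kl}\TP_k\eta^\alpha\TP_l\eta_\lambda$ is exactly the orthogonal projection onto $\TT$, and subtracting from $\delta^\alpha_\lambda$ leaves the normal projection $\nn^\alpha\nn_\lambda$; identity \eqref{Pipro} is then just idempotence of a projection.

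Next I would treat the curvature identities. Identity \eqref{lapg0} is the classical Gauss formula $\Delta_g \eta = -\hh\,\nn$ for the position map of a hypersurface (the Laplace–Beltrami of the immersion is minus the mean-curvature vector). For \eqref{lapg}, expand $\Delta_g\eta^\alpha = \frac{1}{\sqrt g}\TP_i(\sqrt g\, g^{ij}\TP_j\eta^\alpha)$ using the product rule and the identity $\frac{1}{\sqrt g}\TP_i(\sqrt g\,g^{ij}) = -g^{ij}g^{kl}\TP_l\eta^\mu\TP_i\TP_j\eta_\mu$ wait—more cleanly, contract the bare Laplacian $\sqrt g\,g^{ij}\TP_i\TP_j\eta^\alpha$ with $\Pi$ and $(\id-\Pi)$: the tangential part $g^{kl}\TP_k\eta^\alpha\TP_l\eta_\mu \cdot \sqrt g\,g^{ij}\TP_i\TP_j\eta^\mu$ is exactly the second subtracted term, and the normal part is $\sqrt g\,\Delta_g\eta^\alpha$ by \eqref{lapg0} combined with the fact that the Christoffel-symbol corrections are tangential; this yields the displayed formula. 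Identity \eqref{tplapg} follows by applying $\TP_A$ to \eqref{lapg}, commuting $\TP_A$ past $\TP_i$ and grouping the resulting terms: the leading piece $\TP_i(\sqrt g\,g^{ij}\Pi^\alpha_\lambda\TP_A\TP_j\eta^\lambda)$ keeps all three $\TP$-derivatives on $\eta$, and the remaining terms (where $\TP_A$ hits $\sqrt g\,g^{ij}$ or $\Pi^\alpha_\lambda$) reorganize via \eqref{tp ggij} and \eqref{tpn} into the second displayed term with the antisymmetrized coefficient $(g^{ij}g^{kl}-g^{ik}g^{lj})$; this is the most calculation-heavy item and the one I would expect to be the main obstacle in writing cleanly, since one must carefully track which index contractions survive.

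Finally, \eqref{tpn} comes from differentiating $\nn = A^\top N/\sqrt g$ (equivalently $\nn^\alpha = g^{-1/2}(\TP_1\eta\times\TP_2\eta)^\alpha$) and projecting: since $|\nn|^2 = 1$ forces $\TP_A\nn \perp \nn$, $\TP_A\nn$ is tangential, so $\TP_A\nn_\mu = g^{kl}(\TP_A\nn\cdot\TP_k\eta)\TP_l\eta_\mu$; then $\TP_A\nn\cdot\TP_k\eta = -\nn\cdot\TP_A\TP_k\eta = -\TP_k\TP_A\eta^\tau\nn_\tau$ by differentiating $\nn\cdot\TP_k\eta = 0$, which gives the stated formula. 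For \eqref{pt ggij} and \eqref{tp ggij} I would differentiate $\sqrt g\,g^{ij}$ using $\p g_{kl} = 2\,\TP_{(k}\eta^\mu\,\p\TP_{l)}\eta_\mu$ (with $\p = \p_t$ giving $v$, or $\p = \TP$), the formulas $\p\sqrt g = \tfrac12\sqrt g\,g^{kl}\p g_{kl}$ and $\p g^{ij} = -g^{ik}g^{jl}\p g_{kl}$, and collect terms; for $\p_t$ this produces the coefficient $(g^{ij}g^{kl} - 2g^{lj}g^{ik})$ in \eqref{pt ggij}, and for a tangential $\TP$ the symmetric structure of $\TP g_{kl}$ lets one write the coefficient as $\bigl(\tfrac12 g^{ij}g^{kl} - g^{ik}g^{jl}\bigr)$ acting on $\TP g_{kl}$ as in \eqref{tp ggij}. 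All steps are elementary differential geometry; the only real care needed is consistent index placement and the repeated use of the dual-basis identity $g^{ij}\TP_j\eta^\mu\,\TP_i\eta_\nu\cdot(\cdot) = \Pi(\cdot)$.
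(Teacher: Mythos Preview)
Your proposal is correct and self-contained; the paper, by contrast, does not prove this lemma at all but simply cites Disconzi--Kukavica \cite[Lemma~2.5]{DK2017}. Your direct-computation approach is the natural one and is essentially what the cited reference does: use the cofactor formula $A^{3\alpha}=(\TP_1\eta\times\TP_2\eta)^\alpha$ together with Lagrange's identity for \eqref{n}--\eqref{an}, the dual-basis identity for the projection \eqref{Pi}, the Gauss formula for \eqref{lapg0}--\eqref{lapg}, and straightforward differentiation of the metric quantities for \eqref{tpn}--\eqref{tp ggij}. Your argument for \eqref{lapg} is right in spirit (the Christoffel correction $\frac{1}{\sqrt g}\TP_i(\sqrt g\,g^{ij})\TP_j\eta^\alpha$ is tangential, so projecting the full Laplacian onto $\nn$ kills it and leaves $g^{ij}\Pi^\alpha_\lambda\TP_i\TP_j\eta^\lambda$), though the exposition there could be tidied. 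The one place to be careful is \eqref{tplapg}: you correctly flag it as the heaviest computation, and in a written-out proof you would want to verify the antisymmetrized coefficient $(g^{ij}g^{kl}-g^{ik}g^{lj})$ explicitly rather than assert that the terms ``reorganize''; this requires combining $\TP_A(\sqrt g g^{ij})$ from \eqref{pt ggij}/\eqref{tp ggij} with $\TP_A\Pi^\alpha_\lambda$ computed via \eqref{tpn} and checking that the total derivative structure survives. None of this is a gap, just bookkeeping.
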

\begin{proof}
See Disconzi-Kukavica \cite[Lemma 2.5]{DK2017}.
\end{proof}

\nota \label{Q notation} We shall use the notation $Q(\p\eta)$ and $Q(\TP\eta)$ to denote the rational functions of $\p\eta$ and $\TP\eta$, respectively. 

This $Q$ notation allows us to record error terms concisely. For example, for any tangential derivative $\TP_A$, we have $\TP_AQ(\TP\eta)=\tilde{Q}^i_{\alpha}(\TP\eta)\TP_A\TP_i\eta^{\alpha}$ where the term $\tilde{Q}^i_{\alpha}(\TP\eta)$ is also a rational function of $\TP\eta$. Also, recall that $g_{ij}=\TP_i\eta_{\mu}\TP_j\eta^{\mu}$ and $g=\det[g_{ij}]$ and $[g^{ij}]=[g_{ij}]^{-1}$. This means that $g_{ij},~g$ and $g^{ij}$ are rational functions of $\TP\eta$ and so is $\Pi$. 

The following lemma will be employed frequently in the rest of this paper. 

\assump  \label{apriori assumpt}
\red{In the sequel of this paper, we assume $\|\eta\|_{5}, \|\sqrt{\sigma}\eta\|_{5.5}\leq N_0$ a priori for some $N_0>0$, where $N_0$ depends on $T$ and $\|\eta_0\|_{5.5})$. }

\red{This a priori assumption can be straightforwardly recovered thanks to $\eta= \eta_0 +\int_0^T v\dt$. }

\assump  \label{RTapriori}
\red{In the sequel of this paper, we assume the Rayleigh-Taylor sign function $-\p_3 q\geq\frac{c_0}{2}>0$ a priori in some time interval $[0,T]$. This a priori assumption can be justified by using Sobolev embedding: One can later show that $-\p_3 q$ is $C^{0,\frac14}([0,T]\times\Gamma)$-continuous and thus the positivity can be propagated to a positive time interval. See also \cite[Section 5.2]{luozhangMHD2.5}. }

\subsubsection{Sobolev inequalities}

First, we list the Kato-Ponce inequality and its corollary which will be used in nonlinear product estimates.

\begin{lem}[\textbf{Kato-Ponce type inequalities}]\label{katoponce}  Let $J=(I-\Delta)^{1/2},~s\geq 0$. Let $f,g$ be smooth functions. Then the following estimates hold:

(1) $\forall s\geq 0$, we have 
\begin{equation}\label{product}
\begin{aligned}
\|J^s(fg)\|_{L^2}&\lesssim \|f\|_{W^{s,p_1}}\|g\|_{L^{p_2}}+\|f\|_{L^{q_1}}\|g\|_{W^{s,q_2}},\\
\|\p^s(fg)\|_{L^2}&\lesssim \|f\|_{\dot{W}^{s,p_1}}\|g\|_{L^{p_2}}+\|f\|_{L^{q_1}}\|g\|_{\dot{W}^{s,q_2}},
\end{aligned}
\end{equation}with $1/2=1/p_1+1/p_2=1/q_1+1/q_2$ and $2\leq p_1,q_2<\infty$;

(2) $\forall s\geq 1$, we have
\begin{equation}\label{kato3}
\|J^s(fg)-(J^sf)g-f(J^sg)\|_{L^p}\lesssim\|f\|_{W^{1,p_1}}\|g\|_{W^{s-1,p_2}}+\|f\|_{W^{s-1,q_1}}\|g\|_{W^{1,q_2}}
\end{equation} for all the $1<p<p_1,p_2,q_1,q_2<\infty$ with $1/p_1+1/p_2=1/q_1+1/q_2=1/p$.
\end{lem}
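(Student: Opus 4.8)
The plan is to derive both inequalities from the Littlewood--Paley calculus, reducing everything to Bernstein's inequality, the H\"ormander--Mikhlin multiplier theorem, and H\"older's inequality. Fix a dyadic partition of unity and let $\Delta_j$ and $S_j=\sum_{k<j}\Delta_k$ be the usual frequency projections; decompose the product by Bony's formula
\[
fg=T_fg+T_gf+R(f,g),\qquad T_fg=\sum_j S_{j-2}f\,\Delta_j g,\qquad R(f,g)=\sum_{|j-k|\le 2}\Delta_j f\,\Delta_k g.
\]
On each block of output frequency $\sim 2^j$ the operators $J^s$ and $\partial^s\simeq |D|^s$ act as $2^{js}$ times a Fourier multiplier whose symbol is bounded in $C^\infty$ uniformly in $j$, so every block-wise manipulation below costs only a fixed Mikhlin constant.

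For part (1) I would treat the three pieces separately. The paraproduct $T_gf$ has its $j$-th block at output frequency $\sim 2^j$, so by the square-function characterization of $L^2$, H\"older with $\tfrac12=\tfrac1{p_1}+\tfrac1{p_2}$, the Hardy--Littlewood maximal inequality applied to $S_{j-2}g$, and the Littlewood--Paley theorem ($1<p_1<\infty$), one gets $\|\partial^s T_gf\|_{L^2}\lesssim \|g\|_{L^{p_2}}\|f\|_{W^{s,p_1}}$; symmetrically $T_fg$ produces $\|f\|_{L^{q_1}}\|g\|_{W^{s,q_2}}$. For the resonant part one uses $s\ge 0$ to move the output derivative onto the inputs and Cauchy--Schwarz in $j$, which leaves the $\ell^2_j$-norm of $2^{js}\|\Delta_j f\|_{L^{p_1}}$ times the $\ell^2_j$-norm of $\|\widetilde\Delta_j g\|_{L^{p_2}}$; the hypotheses $2\le p_1<\infty$ (or $2\le q_2<\infty$ for the other choice of which factor carries the derivatives) are precisely what is needed to bound these Besov-type quantities by $\|f\|_{W^{s,p_1}}$ and $\|g\|_{L^{p_2}}$ through Minkowski's integral inequality. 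Summing the three contributions yields \eqref{product}; restoring the low-frequency block, which is trivially bounded by $\|f\|_{L^{p_1}}\|g\|_{L^{p_2}}$, gives the inhomogeneous $J^s$ version.

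For part (2) I would expand each of the three terms $J^s(fg)$, $(J^sf)g$, $f(J^sg)$ by Bony's formula. The genuinely top-order pieces are the paraproducts $T_{J^sg}f$ and $T_{J^sf}g$, and these are exactly what is subtracted off, so the surviving terms are of three types: (a) the low--high commutators $\sum_j [J^s,S_{j-2}g]\Delta_j f$ and its mirror, whose symbols are one derivative smoother by a first-order Taylor expansion, contributing $\lesssim \|g\|_{W^{1,p_1}}\|f\|_{W^{s-1,p_2}}+\|g\|_{W^{1,q_1}}\|f\|_{W^{s-1,q_2}}$ via the Coifman--Meyer paraproduct bound; (b) leftover paraproducts such as $T_gJ^sf$ minus their already-cancelled top part, which are again one order better and absorbed into (a); (c) the resonant difference $J^sR(f,g)-R(J^sf,g)-R(f,J^sg)$, estimated as in part (1) but with the derivative budget split as $s=1+(s-1)$, giving $\lesssim \|f\|_{W^{1,p_1}}\|g\|_{W^{s-1,p_2}}+\|f\|_{W^{s-1,q_1}}\|g\|_{W^{1,q_2}}$ for $s\ge 1$. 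Collecting (a)--(c) gives \eqref{kato3}. In the final write-up I would likely shorten this considerably by citing the classical sources for the fractional Leibniz rule (Kato--Ponce, Kenig--Ponce--Vega, Grafakos--Oh) and the Kato--Ponce commutator estimate, since both are by now standard.

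The main obstacle is the bookkeeping in part (2): one must verify precisely which paraproduct sub-blocks cancel between $J^s(fg)$ and $(J^sf)g+f(J^sg)$, and then check that \emph{every} non-cancelling remainder gains a full derivative, so that the norms $W^{s-1}$ and $W^1$ on the right-hand side suffice. In part (1) the only delicate point is the resonant term, where the passage from the Besov-type bound back to the Sobolev norm $W^{s,p}$ is exactly the reason the hypotheses impose $2\le p_1,q_2<\infty$; outside this range the resonant contribution only lands in a Besov space.
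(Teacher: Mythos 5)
Your proposal is essentially correct, but it takes a different route from the paper: the paper gives no argument at all for this lemma and simply cites the classical reference of Kato--Ponce \cite{kato1988commutator} (these are textbook fractional Leibniz and commutator estimates), whereas you reprove them from scratch via Bony's paraproduct decomposition, Bernstein/Mikhlin multiplier bounds, and square-function arguments. Your sketch is the now-standard modern proof (in the spirit of Kenig--Ponce--Vega and Grafakos--Oh), and the one delicate point you isolate is the right one: in part (1) the restriction $2\leq p_1, q_2<\infty$ enters exactly when passing from the $\ell^2_j(L^{p})$ Besov-type bound on the resonant piece back to $\|\cdot\|_{W^{s,p}}$ via Minkowski, and the $<\infty$ is needed for the Littlewood--Paley theorem. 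In part (2) your identification of the cancellation is stated a bit loosely --- the genuinely dangerous pieces of $J^s(fg)$ are $J^sT_fg\approx T_fJ^sg$ and $J^sT_gf\approx T_gJ^sf$, which cancel against the corresponding paraproducts inside $f(J^sg)$ and $(J^sf)g$, while $T_{J^sf}g$, $T_{J^sg}f$ and the resonant terms $R(J^sf,g)$, $R(f,J^sg)$ are the harmless leftovers (the low-frequency or balanced-frequency factor lets you trade one power of $J$) --- but your remainder classes (a)--(c) do cover all of these and each gains the required derivative, so the architecture closes. What the paper's approach buys is brevity and a precise attribution; what yours buys is a self-contained argument and transparency about where each hypothesis ($s\geq0$, $s\geq1$, the exponent ranges) is used. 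As you note yourself, in a final write-up one would do what the paper does and cite the classical sources rather than carry out the dyadic bookkeeping.
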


\begin{proof}
See Kato-Ponce \cite{kato1988commutator}.
\end{proof}

The inequalities listed in the following corollary shall come in handy when estimating products in fractional Sobolev spaces.  All of them are direct consequences of \eqref{product}.
\begin{cor}
Let $f,g$ be given as above. We have
\begin{align}
\|fg\|_{0.5}&\lesssim \|f\|_{0.5}\|g\|_{1.5+\delta}\label{product0.5},\\
\|fg\|_{s} &\lesssim \|f\|_{s}\|g\|_{1.5+\delta}+\|f\|_{1.5+\delta}\|g\|_{s},\quad s\geq 1.5. \label{product s}
\end{align}
\end{cor}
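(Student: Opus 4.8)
The plan is to derive both estimates directly from the Kato--Ponce inequality \eqref{product} by choosing the free Lebesgue exponents so that every resulting mixed Sobolev norm is absorbed by a standard Sobolev embedding; this is exactly the ``direct consequence'' the statement advertises, so nothing beyond \eqref{product} is needed. Throughout, the ambient set is $\Omega\subset\R^3$ or $\Gamma=\T^2$, so every embedding below is applied with spatial dimension $n\le 3$, and on these compact sets (using a bounded extension operator, or the Fourier series in the periodic directions) we use $\|\cdot\|_s=\|J^s(\cdot)\|_{L^2}$, together with the trivial bound $\|fg\|_{L^2}\le\|f\|_{L^2}\|g\|_{L^\infty}$ for the zeroth-order part.

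For \eqref{product s}, $s\ge 1.5$, apply the first line of \eqref{product}. In the first term take $(p_1,p_2)=(2,\infty)$, which is admissible since $p_1=2\ge 2$; this gives $\|f\|_{W^{s,2}}\|g\|_{L^\infty}=\|f\|_s\|g\|_{L^\infty}$. In the second term take $(q_1,q_2)=(\infty,2)$, admissible since $2\le q_2=2<\infty$; this gives $\|f\|_{L^\infty}\|g\|_{W^{s,2}}=\|f\|_{L^\infty}\|g\|_s$. Since $1.5+\delta>n/2$ for $n\le 3$, the Sobolev embedding $H^{1.5+\delta}\hookrightarrow L^\infty$ yields $\|g\|_{L^\infty}\lesssim\|g\|_{1.5+\delta}$ and $\|f\|_{L^\infty}\lesssim\|f\|_{1.5+\delta}$, and summing the two terms gives \eqref{product s}.

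For \eqref{product0.5}, apply the first line of \eqref{product} with $s=0.5$. In the first term take $(p_1,p_2)=(2,\infty)$, giving $\|f\|_{0.5}\|g\|_{L^\infty}\lesssim\|f\|_{0.5}\|g\|_{1.5+\delta}$ exactly as above. In the second term take $(q_1,q_2)=(3,6)$, admissible since $2\le q_2=6<\infty$ and $1/3+1/6=1/2$; this gives $\|f\|_{L^3}\|g\|_{W^{0.5,6}}$, and then $\|f\|_{L^3}\lesssim\|f\|_{0.5}$ by the embedding $H^{1/2}\hookrightarrow L^3$ (valid for $n\le 3$ since $\tfrac13\ge\tfrac12-\tfrac{1}{2n}$), while $\|g\|_{W^{0.5,6}}\lesssim\|g\|_{1.5+\delta}$ by $H^{1.5+\delta}\hookrightarrow W^{0.5,6}$ (valid since $1.5+\delta-\tfrac{n}{2}\ge\tfrac12-\tfrac{n}{6}$ for $n\le 3$). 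Combining the two terms yields \eqref{product0.5}.

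The argument is entirely bookkeeping, and the only point requiring care---the closest thing to an obstacle---is checking, for each choice of exponents, both the admissibility constraints $2\le p_1$, $2\le q_2<\infty$ built into \eqref{product} and the scaling conditions of the Sobolev embeddings in the borderline dimension $n=3$ (notably $H^{1/2}\hookrightarrow L^3$ and $H^{3/2+\delta}\hookrightarrow W^{1/2,6}$). None of this presents a genuine difficulty.
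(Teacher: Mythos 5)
Your proof is correct and follows essentially the same route as the paper: a direct application of the Kato--Ponce inequality \eqref{product} with suitable exponent choices, followed by Sobolev embeddings. The only cosmetic differences are that you use the exponents $(q_1,q_2)=(3,6)$ for \eqref{product0.5} where the paper takes $q_1=q_2=4$, and the inhomogeneous rather than the homogeneous form of \eqref{product} for \eqref{product s}; both variants are valid.
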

\begin{proof}
\eqref{product0.5} follows from setting $s=0.5$, $p_1=2$, $p_2=\infty$, and $q_1=q_2=4$ in the first inequality of \eqref{product}. \eqref{product s} follows from setting $p_1=2$, $p_2=\infty$, $q_1=\infty$ and $q_2=2$ in the second inequality of \eqref{product}. 
\end{proof}


\subsubsection{Elliptic estimates}
We present the Hodge-type div-curl estimate, which will be adapted to study the full Sobolev norms of $v$ and $\bp \eta$. 

\begin{lem} [The Hodge-type elliptic estimate]\label{hodge}
Let $X$ be a smooth vector field and $s\geq 1$, then it holds that
\begin{align}
\label{divcurl1} \red{\|X\|_{s}^2\lesssim C(\|\TP\eta\|_{W^{1,\infty}},\|\eta\|_s^2)\left(\|X\|_0^2+\|\pa\cdot X\|_{s-1}^2+\|\pa\times X\|_{s-1}^2+\|\TP^s X\|_0^2\right),}\\
\label{divcurl2} \red{\|X\|_{s}^2\lesssim C(\|\TP\eta\|_{W^{1,\infty}},\|\eta\|_s^2)\left(\|X\|_0^2+\|\pa\cdot X\|_{s-1}^2+\|\pa\times X\|_{s-1}^2+|\TP^{s-\frac12} X\cdot\nn|_0^2\right),}
\end{align} \red{where the constant $C(\|\TP\eta\|_{W^{1,\infty}},\|\eta\|_s^2)$ depends linearly on $\|\eta\|_s^2$, and $\nn$ is the unit Eulerian normal vector. }
\end{lem}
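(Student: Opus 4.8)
\textbf{Proof plan for the Hodge-type elliptic estimate (Lemma \ref{hodge}).}

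The plan is to prove both \eqref{divcurl1} and \eqref{divcurl2} simultaneously by reducing everything to the standard flat-space Hodge decomposition estimate, transported via the flow map $\eta$. First I would recall the classical div-curl estimate on $\Omega$: for a smooth vector field $Y$ on $\Omega$ and $s\geq 1$,
\[
\|Y\|_s^2 \lesssim \|Y\|_0^2 + \|\p\cdot Y\|_{s-1}^2 + \|\p\times Y\|_{s-1}^2 + |\TP^s Y\cdot N|_0^2 + |\TP^s Y\|_0^2,
\]
and the corresponding version with the full tangential norm $|\TP^{s-1/2}Y|_0^2$ on $\Gamma$ replaced by the normal component $|\TP^{s-1/2}Y\cdot N|_0^2$ together with the divergence and curl (this is the difference between the two displayed inequalities — one keeps the full top-order tangential boundary norm, the other trades it for only the normal trace at the cost of a half-derivative). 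These are standard (see e.g. the references on Hodge systems used in \cite{gu2016construction, DKT}); I would cite them rather than reprove them.

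Next I would relate the Eulerian differential operators $\pa = \nabla_A$ to the flat operators. Writing $X$ for the given vector field, the key algebraic facts are $\pa\cdot X = A^{\mu\alpha}\p_\mu X_\alpha$ and $(\pa\times X)^\alpha = \epsilon^{\alpha\beta\gamma}A^{\mu}_\beta \p_\mu X_\gamma$, so that $\p\cdot X$ and $\p\times X$ differ from $\pa\cdot X$ and $\pa\times X$ by terms of the schematic form $(\delta - A)\,\p X = Q(\p\eta)(\id - \p\eta)\,\p X$ plus lower-order pieces where derivatives land on $A$, i.e. on $\p\eta$. Concretely,
\[
\p\cdot X = \pa\cdot X + (\delta^{\mu\alpha}-A^{\mu\alpha})\p_\mu X_\alpha,
\]
and similarly for the curl. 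To estimate $\|(\delta-A)\p X\|_{s-1}$ I would use the product estimates \eqref{product s} (and \eqref{product0.5} near the endpoint $s=1.5$), bounding $\|(\delta - A)\p X\|_{s-1} \lesssim \|\delta - A\|_{1.5+\delta}\|\p X\|_{s-1} + \|\delta-A\|_{s-1}\|\p X\|_{1.5+\delta}$; the first factor involves $\|\eta\|_{2.5+\delta}\lesssim\|\eta\|_s$ (absorbable for $s\geq 1.5$, and handled by the $W^{1,\infty}$ assumption on $\TP\eta$ plus interpolation for $1\leq s<1.5$), the second factor involves $\|\eta\|_s$, and $\|\p X\|_{1.5+\delta}$ must be absorbed by the left side via interpolation (Young's inequality: $\|\p X\|_{1.5+\delta}\leq \eps\|X\|_s + C_\eps\|X\|_0$ since $1.5+\delta < s+1$... which fails when $s$ is exactly $1$ — see the obstacle remark). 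The boundary terms are treated the same way: $\TP^s X\cdot\nn$ versus $\TP^s X\cdot N$ differ by $\TP^s X\cdot(\nn - N)$ plus commutators, controlled by the trace lemma and \eqref{tpn}, \eqref{n}.

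The main obstacle I anticipate is the careful bookkeeping of the borderline low-regularity cases, specifically making the absorption argument work uniformly for all $s\geq 1$ rather than only $s\geq 1.5$: when $s$ is close to $1$ one cannot afford to put $\|\eta\|_s$ in a product with a top-order norm of $X$, so one must instead use only $\|\TP\eta\|_{W^{1,\infty}}$ (hence the hypothesis) for the "rough" factor and reserve $\|\eta\|_s$ for the factor multiplying a \emph{lower}-order norm of $X$, which is exactly why the constant is asserted to be \emph{linear} in $\|\eta\|_s^2$ — every term with a top-order $\eta$ comes multiplied by at most a low-order norm of $X$. A secondary technical point is tracking that the error terms generated by commuting $\TP^{s-1/2}$ past $\nn$ on the boundary in \eqref{divcurl2} genuinely have the claimed regularity; this uses the geometric identities in Lemma \ref{geometric}, in particular \eqref{tpn} and the fact that $\nn$, $g^{ij}$ are rational functions $Q(\TP\eta)$ of $\TP\eta$. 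Since all of this is standard machinery (and essentially appears in \cite{gu2016construction, DK2017, DKT}), I would present the proof compactly, citing those works for the flat-space input and only spelling out the transport-of-estimate step and the interpolation/absorption structure that yields linearity in $\|\eta\|_s^2$.
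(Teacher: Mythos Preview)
Your sketch is mathematically sound and follows the standard route (flat Hodge estimate plus transport via $A=[\p\eta]^{-1}$ with product/absorption arguments), but it is far more elaborate than what the paper actually does: the paper's entire proof is a two-line citation, pointing to Ginsberg--Lindblad--Luo \cite[Lemma~B.2]{GLL2019LWP} for \eqref{divcurl1} and Cheng--Shkoller \cite[Theorem~1.6]{hodgeinverse} for \eqref{divcurl2}. Your plan essentially reconstructs what is inside those references, so there is no genuine divergence in strategy---only in the level of detail presented. One minor point: the references you name (\cite{gu2016construction, DK2017, DKT}) are adjacent but not the ones the paper invokes; if you want to match the paper you should cite \cite{GLL2019LWP} and \cite{hodgeinverse} instead, and you can then dispense with the low-regularity absorption discussion entirely since those works already handle the full range $s\geq 1$.
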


\begin{proof}
\red{The first inequality is the same as Ginsberg-Lindblad-Luo \cite[Lemma B.2]{GLL2019LWP}. The second inequality is identical to Cheng-Shkoller \cite[Theorem 1.6]{hodgeinverse}.}
\end{proof}

\begin{rmk}
 \red{Note that the inequality \eqref{divcurl2} does not require the bound for $\|\eta\|_{s+\frac12}$. Indeed, $\|\eta\|_{s+\frac12}$ is only required if we replace the boundary term by $|X\cdot\nn|_{s-\frac12}^2$ (corresponding to the inequality (10) in Cheng-Shkoller \cite[Remark 1.4]{hodgeinverse}) because in that case $\TP^{s-\frac12}$ may fall on $\nn=\TP\eta\times\TP\eta$.}
\end{rmk}

Finally, the following $H^1$-elliptic estimates will be applied \red{to control $\|\dd^4 q\|_1$ and $\|\sqrt{\sigma}\TP^{\frac12}\dd^4 q\|_1$ for $\dd=\p_t$ or $\TP$. }
\begin{lem}[\textbf{Low regularity elliptic estimates}]\label{H1elliptic}
\red{Assume $\BB^{\mu\nu}$ satisfies $\|\BB\|_{L^{\infty}}\leq K$ and the ellipticity $\BB^{\mu\nu}(x)\xi_{\mu}\xi_{\nu}\geq \frac{1}{K}|\xi|^2$ for all $x\in\Omega$ and $\xi\in\mathbb{R}^3$. Assume $W$ to be an $H^1$ solution to 
\begin{equation}\label{lowelliptic}
\begin{cases}
-\p_{\nu}(\BB^{\mu\nu}\p_{\mu}W)=\dive \pi &\text{ in }\Omega \\
\BB^{\mu\nu}\p_{\mu}WN_{\nu}=h &\text{ on }\p\Omega,
\end{cases}
\end{equation} where $\pi,\dive \pi \in L^2(\Omega)$ and $h\in H^{-0.5}(\p\Omega)$ with the compatibility condition $$\int_{\p\Omega}(\pi\cdot N+h)dS=0.$$ 
Then we have:
\begin{equation}
\|W\|_{1}\lesssim_{\vol(\Omega)} \|\pi\|_{0}+|h+\pi\cdot N|_{-0.5}. 
\end{equation}}
\end{lem}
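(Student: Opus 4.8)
\textbf{Proof plan for Lemma \ref{H1elliptic}.}

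The plan is to reduce the problem to the coercivity of the bilinear form associated to $\BB$ and a duality argument for the boundary data. First I would set up the weak formulation: $W\in H^1(\Omega)$ is a solution of \eqref{lowelliptic} if and only if
\[
\int_\Omega \BB^{\mu\nu}\p_\mu W\,\p_\nu\varphi\dx = -\int_\Omega \pi\cdot\nab\varphi\dx + \int_{\p\Omega} h\,\varphi\dS
\]
for all test functions $\varphi\in H^1(\Omega)$; the compatibility condition $\int_{\p\Omega}(\pi\cdot N+h)\dS=0$ is exactly what makes the right-hand side vanish on constants, so the equation is solvable modulo constants and $W$ is determined up to an additive constant (which one normalizes, e.g., by $\int_\Omega W\dx=0$, absorbing this into $\vol(\Omega)$ via Poincaré).

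The main step is the a priori bound. Taking $\varphi = W$ (after the normalization) and using the ellipticity $\BB^{\mu\nu}\xi_\mu\xi_\nu\ge \tfrac1K|\xi|^2$ gives
\[
\frac1K\|\nab W\|_0^2 \le \Big|\int_\Omega \pi\cdot\nab W\dx\Big| + \Big|\int_{\p\Omega} h\,W\dS\Big|.
\]
The first term on the right is bounded by $\|\pi\|_0\|\nab W\|_0$ by Cauchy--Schwarz. For the boundary term one cannot afford the naive estimate $|h|_{-0.5}|W|_{0.5}$ because that would cost a full $\|W\|_1$ with a bad constant and also would not produce the combination $h+\pi\cdot N$; instead one rewrites it using the divergence theorem: since $\dive\pi\in L^2$,
\[
\int_{\p\Omega} h\,W\dS = \int_{\p\Omega}(h+\pi\cdot N)W\dS - \int_{\p\Omega}(\pi\cdot N)W\dS
= \int_{\p\Omega}(h+\pi\cdot N)W\dS - \int_\Omega \big(\dive\pi\, W + \pi\cdot\nab W\big)\dx,
\]
and then one observes that the combination $-\dive(\BB\nab W)=\dive\pi$ means the interior contributions reorganize so that only $h+\pi\cdot N$ survives on the boundary after using the equation once more; concretely, testing with $\varphi\equiv 1$ is not enough, but testing the weak form against $W$ and regrouping shows $\int_{\p\Omega} h W = \int_{\p\Omega}(h+\pi\cdot N)W - \int_\Omega \pi\cdot\nab W - \int_\Omega\dive\pi\,W$; combined with the identity coming from the equation this lets one replace the raw $h$ by $h+\pi\cdot N$ at the cost of an extra $\|\pi\|_0\|\nab W\|_0$ term. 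Then $|\int_{\p\Omega}(h+\pi\cdot N)W\dS|\le |h+\pi\cdot N|_{-0.5}|W|_{0.5}\lesssim |h+\pi\cdot N|_{-0.5}\|W\|_1$ by the trace theorem, and $\|W\|_1\lesssim_{\vol(\Omega)}\|\nab W\|_0$ by Poincaré after the mean-zero normalization. Absorbing $\epsilon\|\nab W\|_0^2$ terms into the left-hand side via Young's inequality yields $\|\nab W\|_0\lesssim \|\pi\|_0 + |h+\pi\cdot N|_{-0.5}$, hence $\|W\|_1\lesssim_{\vol(\Omega)}\|\pi\|_0+|h+\pi\cdot N|_{-0.5}$.

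I expect the main obstacle — and the only genuinely delicate point — to be the bookkeeping that produces the combination $h+\pi\cdot N$ in the boundary norm rather than $h$ alone. The naive estimate loses control because $h$ by itself need only lie in $H^{-0.5}(\p\Omega)$ and the duality pairing with $W|_{\p\Omega}\in H^{0.5}(\p\Omega)$ is borderline; the point of shifting $\pi\cdot N$ from the right-hand side data into the boundary term is that $\pi\cdot N$, a priori only in $H^{-0.5}(\p\Omega)$, cancels the worst part of $h$, leaving a quantity controlled by the problem data. This is a standard trick for Neumann problems with divergence-form right-hand sides (and is the reason the compatibility condition is phrased in terms of $\pi\cdot N+h$), so once the algebra is arranged correctly the rest is routine Lax--Milgram and trace estimates. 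Existence of the $H^1$ solution itself is immediate from Lax--Milgram on the quotient space $H^1(\Omega)/\mathbb{R}$ given the coercivity and the boundedness $\|\BB\|_{L^\infty}\le K$.
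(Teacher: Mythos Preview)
Your overall strategy (test with $W$, use ellipticity, Poincar\'e) is exactly the paper's, but your weak formulation is misstated and this is why you find the appearance of $h+\pi\cdot N$ ``delicate'' when in fact it is automatic. If you integrate by parts on \emph{both} sides of $-\p_\nu(\BB^{\mu\nu}\p_\mu W)=\dive\pi$ against a test function $\psi\in H^1(\Omega)$, the left side picks up the Neumann data $h$ and the right side picks up $\pi\cdot N$, giving directly
\[
\int_\Omega \BB^{\mu\nu}\p_\mu W\,\p_\nu\psi\dy = -\int_\Omega \pi\cdot\nab\psi\dy + \int_{\p\Omega}(h+\pi\cdot N)\psi\dS.
\]
Your version, with $h$ alone on the boundary, tacitly treats $\dive\pi$ via the $H^{-1}$/$H^1_0$ duality, which drops the boundary contribution $\pi\cdot N$; that is not legitimate here since $\psi$ does not vanish on $\p\Omega$. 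Once the correct identity is in hand, setting $\psi=W$ and using ellipticity, Cauchy--Schwarz, and the trace theorem gives $\|\p W\|_0\lesssim \|\pi\|_0+|h+\pi\cdot N|_{-0.5}$ in one line; all of your ``bookkeeping'' paragraph (the divergence-theorem rewriting, the vague ``identity coming from the equation'', the stray $\int\dive\pi\,W$ term) becomes unnecessary.

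For the $L^2$ part you normalize $\int_\Omega W=0$ and invoke Poincar\'e, which is fine; the paper instead writes $\int_\Omega W=\int_\Omega(\p_1 y^1)W=-\int_\Omega y^1\,\p_1 W$ to bound the mean directly by $\|\TP W\|_0$. Either closes the argument.
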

\begin{proof}
\red{Testing the first equation of \eqref{lowelliptic} with $\psi\in H^1$ and then integrating by parts, we obtain
\begin{align}
\io \BB^{\mu\nu} \p_\mu W \p_\nu \psi\dy  = -\io \pi \cdot \nab \psi \dy + \int_{\p\Omega} (\pi\cdot N+h)\psi\dS \leq \|\pi\|_{0} \|\psi\|_{1} + |\pi\cdot N+h|_{-0.5} |\psi|_{0.5}.
\end{align}
Because $\BB$ is positive-definite, $\|\BB\|_{L^\infty} \leq K$ and $\psi $ is arbitrary, this yields 
\begin{align}
\|\p W\|_{0} \lesssim \|\pi\|_{0} \|\psi\|_{1} + |\pi\cdot N+h|_{-0.5} |\psi|_{0.5}.
\end{align}
Next, we need to control the $L^2$-norm of $W$. Invoking Poincar\'e's inequality $\|W-\overline{W}\|_0\lesssim \|\p W\|_0$ with $\overline{W}:=\frac{1}{\text{vol}(\Omega)}\io W\dy$, we have
\begin{equation}
\|W\|_0 \lesssim_{\vol (\Omega)} \|\p W\|_0 + \io W\dy.
\end{equation}
Let $Y=(y^1, 0,0)$. Then
\begin{align}
\io W\dy = \io \p_\mu Y^\mu W\dy = -\io y^1 \p_1 W \dy \lesssim_{\vol(\Omega)} \|\TP W\|_{0}. 
\end{align}
This concludes the proof. }
\end{proof}

\begin{rmk}
\red{Lemma \ref{H1elliptic} is essentially \cite[Lemma 3.2]{ignatova2016}. But we can drop the requirement on the smallness of $\|\BB - \mathbf{I}_3\|_{L^\infty}$, where $\mathbf{I}_3$ is the $3\times 3$ identity matrix, by invoking Poincar\'e's inequality. }
\end{rmk}

\section{Elliptic estimates for the pressure and its time derivatives}
In this section, we control the pressure $q$ and its time derivatives. The estimates presented in this section are essentially the same as the ones in \cite[Section~3]{GLZ}. Our conclusion is 
\begin{prop}\label{lem qell}
The total pressure $q$ \red{satisfies}
\begin{equation}\label{qell}
\sum_{k=0}^4\|\p_t^k q\|_{5-k}^2+\sum_{k=0}^3\|\sqrt{\sigma} \p_t^kq\|_{5.5-k}^2\leq P(E(t)).
\end{equation}
\end{prop}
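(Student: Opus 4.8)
The plan is to treat $q$ and its time derivatives $\partial_t^k q$ as solutions of elliptic equations obtained by differentiating the momentum equation and the divergence-free constraint. First, applying $\di_{A}$ to the momentum equation $\partial_t v - \bp^2\eta + \nab_A q = 0$ and using $\di_A v = 0$ (so that $A^{\mu\alpha}\partial_\mu\partial_t v_\alpha = -\partial_t A^{\mu\alpha}\partial_\mu v_\alpha$), one obtains
\[
-\Delta_A q = (\nab_A v):(\nab_A v) - (\nab_A \bz)\cdot\partial\,(\text{terms from }\bp^2\eta) + \text{l.o.t.},
\]
i.e., an equation of the form $-\Delta_A q = \mathfrak{f}(\partial\eta, \partial v, \bz, \partial\bz)$ in $\Omega$, a quadratic-type interior forcing. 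On the top boundary $\Gamma$ we have the Dirichlet-type data $A^{3\alpha}q = -\sigma\sqrt g\,\Delta_g\eta^\alpha$, from which (contracting with $\nn$) $q = -\sigma\sqrt g\,\nn_\alpha\Delta_g\eta^\alpha = -\sigma\mathcal H$, while the Rayleigh--Taylor sign condition together with the a priori assumption plays no direct role here but is needed to keep the formulation consistent; on $\Gamma_0$ we have the Neumann condition $\partial q/\partial N = $ (lower-order terms coming from restricting the normal momentum equation, involving $\bz$ and $v$). For the time-differentiated problems, commuting $\partial_t^k$ through, one gets $-\Delta_A(\partial_t^k q) = \mathfrak{f}_k$ in $\Omega$ with $\mathfrak f_k$ involving up to $k+1$ time derivatives of $v$ and $\bz$ and commutator terms $[\partial_t^k,\Delta_A]q$, the Dirichlet data $\partial_t^k q = -\sigma\partial_t^k\mathcal H$ on $\Gamma$ (controlled using \eqref{tplapg} and \eqref{tpn} to expand $\partial_t^k(\sqrt g\,\Delta_g\eta\cdot\nn)$), and the Neumann data on $\Gamma_0$.

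The key steps, in order: (i) derive the elliptic equations for $\partial_t^k q$, $0\le k\le 4$, being careful that for the highest-order estimates one uses the \emph{Dirichlet} boundary condition on $\Gamma$ (as emphasized in the remark following Theorem \ref{main energy}, to avoid generating an extra time derivative of $v$); (ii) apply standard elliptic regularity for the operator $\Delta_A$ with mixed Dirichlet/Neumann data, using Assumption \ref{apriori assumpt} to control the coefficients $A = A(\eta)$ in $H^{4.5}$ (resp. the $\sqrt\sigma$-weighted version in $H^5$) so that the operator is uniformly elliptic with coefficients of sufficient regularity; (iii) estimate the interior forcing $\mathfrak f_k$ in $H^{3-k}$ (resp. $H^{3.5-k}$ weighted) and the boundary data in the appropriate fractional trace spaces via the product estimates \eqref{product}, \eqref{product s} and the trace lemma, bounding everything by $P(E(t))$ using the definitions \eqref{energy1}, \eqref{energy2}; (iv) for the $\sqrt\sigma$-weighted estimates $\|\sqrt\sigma\,\partial_t^k q\|_{5.5-k}$, note that the surface-tension Dirichlet data already carries a $\sigma$, so $\sqrt\sigma\,\partial_t^k q\big|_\Gamma = -\sigma^{3/2}\partial_t^k\mathcal H$, and the $\sigma^{3/2}$-weighted mean-curvature boundary norms are exactly the ones appearing in $\sigma E_2(t)$ and in the constant $\mathcal C$; (v) sum over $k$ and invoke a finite induction on $k$ (descending or ascending in regularity) to absorb the commutator terms, since each $\mathfrak f_k$ involves $\partial_t^j q$ only for $j < k$ at lower spatial order.

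The main obstacle is the bookkeeping of the boundary terms on $\Gamma$ coming from $\partial_t^k(\sqrt g\,\Delta_g\eta^\alpha)$: one must expand these using the geometric identities \eqref{tplapg}, \eqref{tpn}, \eqref{pt ggij} carefully so that at most $k+2$ derivatives (counting the two spatial derivatives in $\Delta_g$) land on $\eta$, and these must be measured in the boundary norms $|\TP^{6-k}\partial_t^k\eta\cdot\nn|_0$ or $|\wt\TP^{6-k}\partial_t^k\eta\cdot\nn|_0$ appearing in $E_2(t)$ — precisely at the regularity threshold, with no room to spare. A secondary subtlety is that the elliptic estimate must use the version of Lemma \ref{hodge}-type regularity that does \emph{not} require $\|\eta\|_{s+1/2}$ for the unweighted pieces, consistent with the remark after Lemma \ref{hodge}; for the $\sqrt\sigma$-weighted pieces the extra half-derivative on $\eta$ is available precisely because it is weighted by $\sqrt\sigma$. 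Since all of this is structurally identical to \cite[Section 3]{GLZ}, the proof amounts to checking that the slightly different energy functional here still closes, which it does by the above accounting.
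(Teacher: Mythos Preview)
Your proposal contains a genuine gap: you use the \emph{Dirichlet} boundary condition $q=-\sigma\mathcal{H}$ on $\Gamma$ for the elliptic estimates, citing the remark after Theorem~\ref{main energy}. But that remark concerns only the computation of \emph{initial data} $\p_t^k q(0)$ in the Appendix, not the a~priori estimate for $q(t)$. For Proposition~\ref{lem qell} the Dirichlet route does not close. To get $\|\p_t^k q\|_{5-k}$ via Dirichlet elliptic regularity you would need $|\p_t^k q|_{4.5-k}=\sigma|\p_t^k\mathcal{H}|_{4.5-k}$ on $\Gamma$, whose top-order contribution is of the form $\sigma|\TP^{6.5-k}\p_t^k\eta\cdot\nn|_0$. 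The energy $E(t)$ only contains $\sigma|\TP^{6-k}\p_t^k\eta\cdot\nn|_0^2$ (equivalently, $\sqrt{\sigma}|\TP^{6-k}\p_t^k\eta\cdot\nn|_0$) and $\sqrt{\sigma}\|\p_t^{k-1}v\|_{6.5-k}$ from the interior norms in $\sigma E_2$; both are exactly half a derivative short. The same mismatch occurs for the weighted estimates $\|\sqrt{\sigma}\p_t^k q\|_{5.5-k}$, where you would need $\sigma^{3/2}|\TP^{7-k}\p_t^k\eta|_0$. So the claim ``precisely at the regularity threshold, with no room to spare'' is in fact half a derivative over budget. This is precisely why the paper states in Section~2.1.1 that one \emph{cannot} use the Dirichlet condition for $q$ when $\sigma>0$.

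The paper's actual mechanism is different: it writes the elliptic equation in divergence form $-\p_\nu(\BB^{\mu\nu}\p_\mu q)=\dive\pi_0$ with $\pi_0^\nu=-A^{\nu\alpha}(\p_t v-\bp^2\eta)_\alpha$, and takes as \emph{Neumann} data on $\Gamma$ the identity $A^{3\alpha}A^{\mu}_{\alpha}\p_\mu q=A^{3\alpha}(\p_t v-\bp^2\eta)_\alpha$, which is nothing but the normal component of the momentum equation. The key structural point is that then $h+\pi\cdot N=0$ on $\Gamma$, so Lemma~\ref{H1elliptic} gives $\|q\|_1\lesssim\|\pi_0\|_0$ with \emph{no boundary contribution at all}. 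Normal derivatives are traded for tangential ones via \eqref{qnormal}, and the tangentially differentiated problem $\dd^4 q$ (with $\dd=\TP$ or $\p_t$) is again in the form of Lemma~\ref{H1elliptic} with $h_4+\pi_4\cdot N=0$. The price is that $\pi_4$ contains $\dd^4\p_t v$, so for $\dd^4=\p_t^4$ one needs $\|\p_t^5 v\|_0$, which is exactly why the full time-derivative energy must be present in $E_1(t)$. Your inductive scheme on $k$ with Dirichlet data would not produce this term and would not close.
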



Due to the presence of the surface tension, we need to consider the elliptic equation verified by $q$ equipped with the Neumann boundary condition. We henceforth let $\lap_A q := \di_A (\nab_A q)=A^{\mu\alpha}\p_\mu (A^\nu_\alpha \p_\nu q)$.  \red{Taking $\di_A$ in the second equation of \eqref{MHDLST} and invoking the identity $\di_A v = \di_A \bp\eta=0$ and Piola's identity $\p_{\mu}A^{\mu\alpha}=0$, we obtain
\begin{equation}\label{qelliptic}
-\lap_A q = -\p_{\nu}(A^{\nu\alpha}{A^{\mu}}_{\alpha}\p_{\mu}q)= \pa^{\alpha}v_{\beta}\pa^{\beta}v_{\alpha}-\pa^{\alpha}\bp\eta_{\beta}\pa^{\beta}\bp\eta_{\alpha}.
\end{equation}}

\red{We define $\mathfrak{B}^{\mu\nu}=A^{\nu\alpha}{A^{\mu}}_{\alpha}$, i.e., $\mathfrak{B}=AA^\top$, then $\mathfrak{B}$ is a real symmetric positive-definite matrix and thus satisfies the assumption of Lemma \ref{H1elliptic}. In particular, we have $\mathfrak{B}^{33}\geq 1/K$ for some $K>0$. Therefore, we can express two normal derivatives of $q$ in terms of $\leq 1$ normal derivatives of $q$ plus the terms containing $v$ and $\bp\eta$ via the following identity}
\begin{equation}\label{qnormal}
\red{\p_3^2 q=\dfrac{1}{\mathfrak{B}^{33}}\left((\pa^{\alpha} v_{\beta})(\pa^{\beta}v_{\alpha})-(\pa^{\alpha} b_{\beta})(\pa^{\beta} b_{\alpha})-\sum_{\mu+\nu\neq 6}\p_{\mu}(\mathfrak{B}^{\mu\nu}\p_{\mu}q)-\p_3\mathfrak{B}^{3\nu}\p_{\nu}q\right).}
\end{equation} \red{Repeatedly, one can reduce up to 5 normal derivatives of $q$ to the control of $\leq 4$ derivatives of $v$ and $\bp\eta$, and $\leq 1$ \textit{normal} derivatives of $q$. We denote these terms of $q$ to be $\|\dd^4\p q\|_0$ where $\dd=\TP$ or $\p_t$. Such tangential derivatives will be controlled by using Lemma \ref{H1elliptic}.}

\red{To control these tangential derivatives of $\p q$, we first invoke the second equation of \eqref{MHDLST} to get the divergence form
\begin{equation}\label{qeq}
-\p_{\nu}(A^{\nu\alpha}{A^{\mu}}_{\alpha}\p_{\mu}q)=-\p_{\nu}\left(A^{\nu\alpha}(\p_tv-\bp^2\eta)_{\alpha}\right) \text{ in }\Omega,
\end{equation}
with Neumann boundary condition
\begin{equation}\label{qbdry}
A^{3\alpha}{A^{\mu}}_{\alpha}\p_{\mu} q=A^{3\alpha}(\p_t v-\bp^2\eta)_{\alpha}\text{ on }\Gamma.
\end{equation}}

\red{Now if we set 
\[
\BB^{\mu\nu}:=A^{\nu\alpha}{A^{\mu}}_{\alpha},\quad h_0:= \text{RHS of}\,\,\, \eqref{qbdry}
\]
and
\[
\pi_0^{\nu}:=-\left(A^{\nu\alpha}(\p_tv-\bp^2\eta)_{\alpha}\right)
\]
then the elliptic system \eqref{qeq}-\eqref{qbdry} is exactly of the form \eqref{lowelliptic} and satisfies $h_0+\pi_0\cdot N=0$ on $\Gamma$. Invoking Lemma \ref{H1elliptic}, we get the estimate for $\|q\|_1$
\begin{equation}\label{qH1}
\|q\|_1\lesssim \|\pi_0\|_0\lesssim P(N_0)(\|\p_t v\|_0+\|\bp\eta\|_1).
\end{equation}}

\red{Next we let $\dd=\TP$ or $\p_t$ be a tangential derivative and take $\dd^4$ in the elliptic equation \eqref{qeq}-\eqref{qbdry} . Using Piola's identity $\p_{\mu}A^{\mu\alpha}=0$, we can still write the $\dd^4$-differentiated equation into the divergence form 
\begin{equation}\label{qtttteq}
-\p_{\nu}(A^{\nu\alpha}A^{\mu\alpha}\dd^4\p_{\mu}q)=-\p_{\nu}\left(\left[A^{\nu\alpha}A^{\mu\alpha},\dd^4\right]\p_{\mu}q\right)-\p_{\nu}\dd^4\left(A^{\nu\alpha}(\p_tv-\bp^2\eta)_{\alpha}\right),
\end{equation}
with boundary condition
\begin{equation}\label{qttttbdry}
A^{3\alpha}A^{\mu\alpha}\p_{\mu}\dd^4 q=\left[A^{3\alpha}A^{\mu\alpha},\dd^4\right]\p_{\mu}q+\dd^4\left(A^{3\alpha}(\p_t v-\bp^2\eta)_{\alpha}\right),\text{ on }\Gamma.
\end{equation}}

\red{Similarly we define $\pi_4^{\nu}:=-\left[A^{\nu\alpha}A^{\mu\alpha},\dd^4\right]\p_{\mu}q-\dd^4\left(A^{\nu\alpha}(\p_tv-\bp^2\eta)_{\alpha}\right)$ and $h_4=\text{Right side of }\eqref{qttttbdry}$. It is also straightforward to see that $h_4+\pi_4\cdot N=0$ on $\Gamma$ and $\pi, \dive \pi \in L^2$ because the terms in $\pi$ has at most 4 derivatives.}

\red{Then again by Lemma \ref{H1elliptic}, we have
\begin{align}\label{qtttt11}
\|\dd^4q\|_1\lesssim\|\pi\|_0\lesssim P(E_1(t))+P(N_0)(\|\dd^3\p q\|_0+\|\dd^2\p q\|_0+\|\dd\p q\|_{L^{\infty}}+\|\p q\|_{L^{\infty}}).
\end{align} We find that the terms involving $q$ on the right side of \eqref{qtttt11} contain at most 4 derivatives. One can repeatedly use \eqref{qnormal} to reduce the normal derivatives to tangential ones, and then use $H^1$-type elliptic estimates to make the order lower until only $\|\p q\|_0$ appears.}

\red{Similarly, one can replace $\dd^4 q$ by $\sqrt{\sigma}\TP^{0.5}\dd^4 q$ and follow the same way as above to control $\|\sqrt{\sigma}\p_t^k q\|_{5.5-k}$ for $0\leq k\leq 3$, so we omit these steps and finish the proof of Proposition \ref{lem qell}.}

\section{Estimates for the non-weighted full Sobolev norms}\label{sect divcurl}
We study the estimates for $v$, $\bp\eta$, and their time derivatives in full Sobolev spaces. More precisely, we need to estimate 
$$
\|\p_t^k v\|_{5-k}^2, \quad \|\p_t^k \bp\eta\|_{5-k}^2,\quad \text{for}\,\,k = 0,1,2,3,4.
$$
We will adopt the Hodge-type elliptic div-curl estimate \eqref{divcurl1} to the quantities above. Specifically, we will replace $X$ by $\p_t^k v$ and $\p_t^k \bp\eta$, as well as their $\sigma$-weighted versions. \red{For $0\leq k\leq 4$, using \eqref{divcurl1} in Lemma \ref{hodge}, we have
\begin{align}
\label{divcurlv1} \|\p_t^k v\|_{5-k}^2&\lesssim C\left(\|\sqrt{\sigma} v\|_0^2+\|\sqrt{\sigma}\dive_A \p_t^k v\|_{4-k}^2+\|\curl_A \p_t^k v\|_{4-k}^2+\|\TP^{5-k}\p_t^kv\|_0^2\right),\\
\label{divcurlb1} \|\p_t^k \bp\eta\|_{5.5-k}^2&\lesssim C\left(\|\bp\eta\|_0^2+\|\dive_A \p_t^k\bp\eta\|_{4.5-k}^2+\|\curl_A \p_t^k \bp\eta\|_{4.5-k}^2+\|\TP^{5-k}\p_t^k\bp\eta\|_0^2\right)
\end{align}where the constant $C=C(|\TP\eta|_{W^{1,\infty}},\|\eta\|_{5-k}^2)>0$ depends linearly on $\|\eta\|_{5-k}^2$.}

\subsection{Divergence estimates} \label{sect. div bound}
First, we treat 
$$
\|\di \p_t^k v\|_{4-k}^2,\quad \|\di \p_t^k \bp\eta\|_{4-k}^2, \,\, k=0,\cdots, 4. 
$$

We recall that $\di_A X:= A^{\mu\alpha} \p_\mu X_\alpha$. When $k=0$, since $\di_A v=0$ and $\di_A \bp\eta =0$. \red{When $k\geq1$, we differentiate the div-free conditions in time and get
\begin{equation} \label{pt div v b}
\begin{aligned}
\dive_A\p_t^k v=&~\p_t^k(\dive_A v)-[\p_t^k,A^{\mu\alpha}]\p_{\mu}v_{\alpha}=-[\p_t^k,A^{\mu\alpha}]\p_{\mu}v_{\alpha},\\
\dive_A \p_t^k\bp\eta =&~-[\p_t^k,A^{\mu\alpha}]\p_{\mu}\bp\eta_{\alpha}.
\end{aligned}
\end{equation}}

\red{We notice that the right side of \eqref{pt div v b} only contains $\leq k-1$ time derivatives and the highest order terms contains $4$ derivatives. Thus, in light of the Sobolev product estimate \eqref{product s}, we have
\begin{equation}\label{div est}
\|\dive_A\p_t^k v\|_{4-k}^2 +\|\dive_A \p_t^k\bp\eta\|_{4-k}^2 \lesssim P\left(\sum_{i=0}^{k-1} \|\eta\|_4,\|\p_t^i v\|_{4-i},\|\p_t^i \bp\eta\|_{4-i}\right)\lesssim P(E_1(0))+ \int_0^T P(E_1(t))\dt.
\end{equation}}

\subsection{Curl estimates}\label{sect. curl bound}
In this part we aim to control $\|\curl_A \p_t^{k}v\|_{4-k}$ and $\|\curl_A \p_t^k\bp\eta\|_{4-k}$ for $0\leq k \leq 4$, where we recall $(\curl_A X)_\lambda := \epsilon_{\lambda\tau\alpha}A^{\mu\tau} \p_\mu X^\alpha$. 

Applying $\curl_A$ to the second equation of \eqref{MHDLST}, we get
\begin{equation}\label{curlbeq}
\p_t(\curl_A v)-\bp\left(\curl_A(\bp\eta)\right)=\curl_{\p_t A}v+[\curl_A,\bp]\bp\eta:=\mathcal{F},
\end{equation}
where $(\curl_{\p_t A}v)_\la:=\epsilon_{\lambda\tau\alpha}\p_t A^{\mu\tau} \p_\mu X^\alpha$. Now, when $k=0$, by taking $\p^4$ to \eqref{curlbeq},  testing it with $ \p^4\curl_{A} v$, we have
\begin{align}\label{curl energy pre}
&\frac{d}{dt}\frac{1}{2}\int_{\Omega} |\p^4\curl_{A}v|^2-\int_{\Omega} (\p^4 \curl_A v)\left(\bp \p^4 \left(\curl_A (\bp\eta)\right)\right)\nonumber\\
=&\int_{\Omega} \left([\p^4, \bp] \curl_A \bp\eta + \p^4\mathcal{F} \right)(\p^4 \curl_A v).
\end{align}
Since $\bz^3=0$, we integrate $\bp$ by parts in the second term on the LHS:
\begin{align}
&-\int_{\Omega} (\p^4 \curl_A v)\left(\bp \p^4 \left(\curl_A \bp\eta\right)\right) = \int_{\Omega} \bp (\p^4 \curl_A v) \p^4(\curl_A (\bp\eta))\nonumber\\
= & \int_{\Omega} \p^4 \curl_A (\bp v) \p^4\curl_A (\bp\eta)-\int_{\Omega} ([\p^4\curl_A,  \bp] v) \p^4\curl_A (\bp\eta).
\end{align}
Here, $\int_{\Omega} \p^4 \curl_A (\bp v) \p^4\curl_A (\bp\eta)$ contributes to
\begin{align}
\frac{d}{dt}\frac{1}{2}\int_{\Omega} |\p^4 \curl_A (\bp \eta) |^2- \int_{\Omega} \p^4 \curl_{\p_tA} (\bp \eta) \p^4\curl_A (\bp\eta).
\end{align}
Therefore, \eqref{curl energy pre} becomes
\begin{equation}\label{energy curl}
\begin{aligned}
&\frac{d}{dt}\frac{1}{2}\left(\int_{\Omega} |\p^4\curl_{A}v|^2+\int_{\Omega} |\p^4 \curl_A (\bp \eta) |^2\right)\\
=&\int_{\Omega} \left([\p^4, \bp] \curl_A \bp\eta + \p^4\mathcal{F} \right)(\p^4 \curl_A v)\\
&+\int_{\Omega} ([\p^4\curl_A,  \bp] v) \p^4\curl_A (\bp\eta)+\int_{\Omega} \p^4 \curl_{\p_tA} (\bp \eta) \p^4\curl_A (\bp\eta).
\end{aligned}
\end{equation}
Now, because 
\begin{align*}
\|\mathcal{F}\|_4^2 \leq \|\curl_{\p_t A} v\|_4^2+\|[\curl_A, \bp]\bp\eta\|_4^2 \leq P(\|\bz\|_5, \|\eta\|_5, \|v\|_5, \|\bp\eta\|_5), 
\end{align*}
it is not hard to see that the terms on the RHS of \eqref{energy curl} can be controlled by $P(E(t))$. Therefore, 
\begin{equation}\label{curl curl}
\|\curl_A v\|_4^2+\|\curl_A \bp\eta\|_4^2\bigg|_0^T \lesssim \int_0^T P(E(t)). 
\end{equation} 

The estimates for the cases when $k=1,2,3,4$ follow from a parallel argument: By taking $\p^{4-k}\p_t^k$ (or $\p^{4.5-k}\p_t^k$) to \eqref{curlbeq}, testing it with $\p^{4-k}\p_t^k\curl_A v$, the energy estimate then yields
\begin{equation}
\sum_{k=1}^4 \Big(\|\curl_A \p_t^kv\|_{4-k}^2 +\|\curl_A \bp\p_t^k\eta\|_{4-k}^2 \Big)\lesssim P(E_1(0))+\int_0^T P(E(t)). 
\end{equation} 
Finally, given \eqref{curl curl}, we obtain
\begin{align}\label{curl est}
\red{\sum_{k=0}^4 \Big(\|\curl_A \p_t^kv\|_{4-k}^2 +\|\curl_A \bp\p_t^k\eta\|_{4-k}^2\Big)\lesssim P(E_1(0))+ \int_0^T P(E_1(t)).}
\end{align}


\section{Tangential estimate of spatial derivatives}\label{sect tgs5}

\red{By \eqref{divcurlv1}-\eqref{divcurlb1}, we still need to do the interior tangential estimate.} In this section, we would like to do the $\TP^5$-estimates for $v$ and $b=\bp\eta$, i.e., the control of tangential spatial derivatives.

\subsection{Interior estimates: Alinhac good unknown}\label{sect AGU}

However, we cannot directly commute $\TP^5$ with the covariant derivative $\pa$ because the commutator contains $\TP^5 A=\TP^5\p\eta\times\p\eta$ whose $L^2$ norm cannot be controlled. The reason is that the essential highest order term in $\TP^5(\pa f)$, i.e., the standard derivatives of a covariant derivative, is the covariant derivative of Alinhac good unknown $\mathbf{f}:=\TP^5f-\TP^5\eta\cdot\pa f$ instead of the term produced by simply commuting $\TP^5$ with $\pa$. Specifically,
\begin{align*}
\TP^5(\pa^{\alpha}f)&=\pa^{\alpha}(\TP^5 f)+(\TP^5A^{\mu\alpha})\p_\mu f+[\TP^5,A^{\mu\alpha},\p_{\mu} f] \\
&=\pa^{\alpha}(\TP^5 f)-\TP^4(A^{\mu\gamma}\TP\p_{\beta}\eta_{\gamma}A^{\beta\alpha})\p_\mu f+[\TP^5,A^{\mu\alpha},\p_{\mu} f] \\
&=\pa^{\alpha}(\TP^5 f)-A^{\beta\alpha}\p_{\beta}\TP^5\eta_{\gamma}A^{\mu\gamma}\p_\mu f-([\TP^4,A^{\mu\gamma}A^{\beta\alpha}]\TP\p_{\beta}\eta_{\gamma})\p_\mu f+[\TP^5,A^{\mu\alpha},\p_{\mu} f] \\
&=\underbrace{\pa^{\alpha}(\TP^5 f-\TP^5 \eta_{\gamma}A^{\mu\gamma}\p_\mu f)}_{=\pa^{\alpha}\mathbf{f}}+\underbrace{\TP^5 \eta_{\gamma}\pa^{\alpha}(\pa^{\gamma} g)-([\TP^4,A^{\mu\gamma}A^{\beta\alpha}]\TP\p_{\beta}\eta_{\gamma})\p_\mu f+[\TP^5,A^{\mu\alpha},\p_{\mu} f] }_{=:C^{\alpha}(f)},
\end{align*} 

We introduce the Alinhac good unknowns of $v$ and $q$ with respect to $\TP^5$ by
\begin{equation}\label{good5}
\VV:=\TP^5 v-\TP^5\eta\cdot\pa v,~~\QQ:=\TP^5 q-\TP^5\eta\cdot\pa q.
\end{equation} Then direct computation (e.g., see \cite[Section 4.2.4]{gu2016construction}) shows that the good unknowns enjoy the following properties:
\begin{equation}\label{goodid}
\TP^5 \underbrace{(\pa\cdot v)}_{=0}=\pa\cdot\VV+C(v),~~\TP^5(\pa q)=\pa\QQ+C(q)
\end{equation} and
\begin{equation}\label{goodco}
\|C(f)\|_0\lesssim P(\|\eta\|_5)\|f\|_5.
\end{equation}

Under this setting, we take $\TP^5$ in the second equation of \eqref{MHDLST} and invoke \eqref{good5} to get the evolution equation of the Alinhac good unknowns
\begin{equation}\label{goodeq}
\p_t\VV=-\pa\QQ+\bp(\TP^5\bp\eta)+\underbrace{\p_t(\TP^5\eta\cdot\pa v)-C(q)+[\TP^5,\bp](\bp\eta)}_{=:\ff_0}.
\end{equation}

Taking $L^2(\Omega)$ inner product with $\VV$, we get
\begin{equation}\label{goodenergy0}
\frac12\frac{d}{dt}\io|\VV|^2\dy=-\io\pa\QQ\cdot\VV\dy+\io\left(\bp(\TP^5\bp\eta)\right)\cdot\VV\dy+\io\ff_0\cdot\VV\dy,
\end{equation}where the last term can be directly controlled:
\begin{equation}\label{I3}
\io\ff_0\cdot\VV\dy\lesssim P(\|\eta\|_5,\|v\|_5,\|\p_t v\|_4,\|q\|_5,\|\bz\|_5,\|\bp\eta\|_5)\lesssim P(E_1(t)).
\end{equation}

Then we integrate $\bp$ by parts in the second integral of \eqref{goodenergy0} to produce the tangential energy of the magnetic field $\bp\eta$. Note that since $\bz\cdot N=0$ on $\p{\Omega}$ and $\dive \bz=0$, no boundary term appears in this step. 
\begin{equation}\label{tgb5}
\begin{aligned}
&\io\left(\bp(\TP^5\bp\eta)\right)\cdot\VV\dy=-\io(\TP^5\bp\eta)\cdot\bp\VV\dy\\
=&-\io(\TP^5\bp\eta)\cdot\bp(\TP^5\p_t\eta)\dy+\io(\TP^5\bp\eta)\cdot\bp(\TP^5\eta\cdot\pa v)\dy\\
=&-\frac12\frac{d}{dt}\io\left|\TP^5(\bp\eta)\right|^2+\io(\TP^5\bp\eta^{\alpha})([\TP^5,\bp]v_{\alpha}+\bp(\TP^5\eta\cdot\pa v_{\alpha}))\dy\\
\lesssim&-\frac12\frac{d}{dt}\io\left|\TP^5(\bp\eta)\right|^2+P(E_1(t)).
\end{aligned}
\end{equation}

Next, we analyze the first integral of \eqref{goodenergy0}. Integrate by parts, using Piola's identity $\p_{\mu}A^{\mu\alpha}=0$ and invoking \eqref{goodid}, we get
\begin{equation}\label{I1}
\begin{aligned}
-\io\pa\QQ\cdot\VV\dy=&\io\QQ(\pa\cdot\VV)\dy\underbrace{-\ig A^{3\alpha}\QQ\VV_{\alpha}\dS}_{=:J}-\int_{\Gamma_0}\underbrace{A^{3\alpha}\QQ\VV_{\alpha}}_{=0}\dS\\
=&-\io\QQ~C(v)\dy+J\lesssim\|\QQ\|_{0}\|C(v)\|_0+J\\
\lesssim&~P(\|\eta\|_5,\|q\|_5,|v\|_5)+J,
\end{aligned}
\end{equation}where the boundary integral on $\Gamma_0$ vanishes due to $\eta|_{\Gamma_0}=\text{ Id}$ and thus $A^{3\alpha}\VV_{\alpha}=\TP^5 v_3=0$. Therefore, it remains to analyze the boundary integral $J$.

\subsection{Boundary estimates and cancellation structure}\label{sect RTST}

The boundary integral now reads
\begin{equation}\label{J0}
\begin{aligned}
J=&-\ig A^{3\alpha} \QQ\VV_{\alpha}\dS\\
=&-\ig A^{3\alpha}\TP^5 q\VV_{\alpha}\dS\underbrace{+\ig A^{3\alpha}(\TP^5\eta\cdot\pa q)\VV_{\alpha}\dS}_{=:\text{ RT}}\\
=&\underbrace{-\ig \TP^5(A^{3\alpha} q)\VV_{\alpha}\dS}_{=:\text{ ST}}+\ig q (\TP^5 A^{3\alpha})\VV_{\alpha}\dS +\ig \sum_{k=1}^4\binom{5}{k}\TP^{5-k} A^{3\alpha} \TP^k q \VV_{\alpha}\dS +\RT\\
=:&\ST+J_1+J_2+\RT.
\end{aligned}
\end{equation}

\subsubsection{Non-weighted boundary energy: Rayleigh-Taylor sign condition}\label{sect RT}

The term RT and the Rayleigh-Taylor sign condition yield the non-weighted boundary energy. Recall that $\VV=\TP^5 v-\TP^5\eta\cdot\pa v$, then we have
\begin{equation}\label{RT0}
\begin{aligned}
\RT=&~\ig A^{3\alpha}\TP^5\eta_{\beta}A^{3\beta}\p_3 q\TP^5 v_{\alpha}\dS\\
&-\ig A^{3\alpha}\TP^5\eta_{\beta}A^{3\beta}\p_3 q\TP^5 \eta_{\gamma}A^{\mu\gamma}\p_\mu v_{\alpha}\dS\\
&+\sum_{i=1}^2\ig A^{3\alpha}\TP^5\eta_{\beta}A^{i\beta}\TP_i q(\TP^5v_{\alpha}-\TP^5\eta\cdot \pa v_{\alpha})\dS\\
=:&\RT_1+\RT_2+\RT_3.
\end{aligned}
\end{equation}

The term $\RT_1$ gives the boundary energy term by writting $v_{\alpha}=\p_t\eta_{\alpha}$.
\begin{equation}\label{RT1}
\begin{aligned}
\RT_1=&~\ig A^{3\alpha}\TP^5\eta_{\beta}A^{3\beta}\p_3 q\p_t\TP^5\eta_{\alpha}\dS\\
=&~-\frac12\frac{d}{dt}\ig(-\p_3 q)\left|A^{3\alpha}\TP^5\eta_{\alpha}\right|^2\dS+\ig (\p_t A^{3\alpha})A^{3\beta}\TP^5\eta_{\beta}\p_3 q\TP^5\eta_{\alpha}\dS+\frac12\ig\p_t\p_3q\left|A^{3\alpha}\TP^5\eta_{\alpha}\right|^2\dS\\
=:&-\frac12\frac{d}{dt}\ig(-\p_3 q)\left|A^{3\alpha}\TP^5\eta_{\alpha}\right|^2\dS+\RT_{11}+\RT_{12}.
\end{aligned}
\end{equation}

The term$\RT_{12}$ can be directly controlled by
\begin{equation}\label{RT12}
\RT_{12}\lesssim |\p_t\p_3 q|_{L^{\infty}}\left|A^{3\alpha}\TP^5\eta_{\alpha}\right|_0^2\lesssim P(E_1(t)).
\end{equation}

The term$\RT_{11}$ is exactly \red{canceled} by$\RT_2$ after plugging $\p_t A^{3\alpha}=-A^{3\gamma}\p_{\mu}v_{\gamma}A^{\mu\alpha}$:
\begin{equation}\label{RT11}
\RT_{11}=-\ig A^{3\gamma}\p_{\mu}v_{\gamma}A^{\mu\alpha}A^{3\beta}\TP^5\eta_{\beta}\p_3 q\TP^5\eta_{\alpha}\dS=-\RT_{2}.
\end{equation}

Finally, invoking $q=-\sigma\sqrt{g}\Delta_g\eta\cdot\nn=\sigma Q(\TP\eta)\TP^2\eta\cdot\nn$, we can control$\RT_3$ by the weighted energy and trace lemma
\begin{equation}\label{RT3}
\begin{aligned}
\RT_3=&~\sigma\ig A^{3\alpha}\TP^5\eta_{\beta}A^{i\beta} \TP_i(Q(\TP\eta)\TP^2\eta\cdot\nn)(\TP^5v_{\alpha}-\TP^5\eta\cdot\pa v_{\alpha})\dS\\
\lesssim&~P(|\p\eta|_{L^{\infty}})|\TP^3\eta|_{L^{\infty}}|\sqrt{\sigma}\TP^5\eta|_0(|\sqrt{\sigma}\TP^5v|_0+|\sqrt{\sigma}\TP^5\eta|_0|\pa v|_{L^{\infty}})\\
\lesssim&~P(\|\eta\|_{3})\|\eta\|_5\underbrace{\|\sqrt{\sigma}\eta\|_{5.5}(\|\sqrt{\sigma}v\|_{5.5}+\|\sqrt{\sigma}\eta\|_{5.5}\| v\|_{3})}_{\leq \sqrt{\sigma E_2}(\sqrt{\sigma E_2}+\sqrt{E_1}\sqrt{\sigma E_2})}\\
\lesssim&~P(E_1(t))(\sigma E_2(t)).
\end{aligned}
\end{equation}

Summarizing \eqref{RT0}-\eqref{RT3} and using the a priori assumption \ref{RTapriori} and $A^{3\alpha}=\sqrt{g}\nn^{\alpha}$, we conclude the estimate of RT by
\begin{equation}\label{RT}
\int_0^T\RT\dt\lesssim -\frac{c_0}{4}\left|\TP^5\eta_{\alpha}\cdot\nn\right|_0^2+\int_0^TP(E_1(t))(\sigma E_2(t))\dt.
\end{equation}

\subsubsection{Control of the weighted boundary energy: surface tension}\label{sect ST}

Now we analyze the term ST, where the surface tension gives the $\wt$-weighted top-order boundary energy. Invoking $A^{3\alpha}q=-\sigma\sqrt{g}g^{ij}\nn^{\alpha}\nn^{\beta}\TP_i\TP_j\eta^{\beta}$, we get
\begin{equation}\label{ST0}
\begin{aligned}
\ST=&~\sigma\ig\sqrt{g}g^{ij}\nn^{\alpha}\nn^{\beta}\TP^5\TP_i\TP_j\eta_{\beta}(\TP^5 v_{\alpha}-\TP^5\eta\cdot\pa v_{\alpha})\dS\\
&+5\sigma\ig\TP(\sqrt{g}g^{ij}\nn^{\alpha}\nn^{\beta})\TP^4\TP_i\TP_j\eta_{\beta}(\TP^5 v_{\alpha}-\TP^5\eta\cdot\pa v_{\alpha})\dS\\
&+\sigma\ig\TP^5(\sqrt{g}g^{ij}\nn^{\alpha}\nn^{\beta})\TP_i\TP_j\eta_{\beta}(\TP^5 v_{\alpha}-\TP^5\eta\cdot\pa v_{\alpha})\dS\\
&+\sum_{k=2}^4\sigma\ig\binom{5}{k}\TP^k(\sqrt{g}g^{ij}\nn^{\alpha}\nn^{\beta})\TP^{5-k}\TP_i\TP_j\eta_{\beta}(\TP^5 v_{\alpha}-\TP^5\eta\cdot\pa v_{\alpha})\dS\\
=:&\ST_1+\ST_2+\ST_3+\ST_4.
\end{aligned}
\end{equation}

The term$\ST_4$ can be directly controlled with the help of $\wt$-weighted energy:
\begin{equation}\label{ST4}
\begin{aligned}
\ST_4\lesssim&~10|\TP^2(\sqrt{g}g^{ij}\nn^{\alpha}\nn^{\beta})|_{L^{\infty}}|\sqrt{\sigma}\TP^3\TP^2_{ij}\eta_{\beta}|_0(|\sqrt{\sigma}\TP^5v|_0+|\sqrt{\sigma}\TP^5\eta|_0|\pa v|_{L^{\infty}})\\
&+10|\TP^3(\sqrt{g}g^{ij}\nn^{\alpha}\nn^{\beta})|_{L^4}|\sqrt{\sigma}\TP^2\TP^2_{ij}\eta_{\beta}|_{L^4}(|\sqrt{\sigma}\TP^5v|_0+|\sqrt{\sigma}\TP^5\eta|_0|\pa v|_{L^{\infty}})\\
&+5|\sqrt{\sigma}\TP^4(\sqrt{g}g^{ij}\nn^{\alpha}\nn^{\beta})|_{0}|\TP\TP^2_{ij}\eta_{\beta}|_{L^{\infty}}(|\sqrt{\sigma}\TP^5v|_0+|\sqrt{\sigma}\TP^5\eta|_0|\pa v|_{L^{\infty}})\\
\lesssim&~P(E_1(t))(\sigma E_2(t)).
\end{aligned}
\end{equation}

In$\ST_1$, we first integrate $\TP_i$ by parts:
\begin{equation}\label{ST10}
\begin{aligned}
\ST_1\overset{\TP_i}{=}&~-\sigma\ig\sqrt{g}g^{ij}\nn^{\alpha}\nn^{\beta}\TP^5\TP_j\eta_{\beta}\TP_i(\TP^5 v_{\alpha}-\TP^5\eta\cdot\pa v_{\alpha})\dS\\
&-\sigma\ig\TP_i(\sqrt{g}g^{ij}\nn^{\alpha}\nn^{\beta})\TP^5\TP_j\eta_{\beta}(\TP^5 v_{\alpha}-\TP^5\eta\cdot\pa v_{\alpha})\dS\\
=:&\ST_{11}+\ST_{12}.
\end{aligned}
\end{equation}

In$\ST_{11}$, we write $v_{\alpha}=\p_t\eta_{\alpha}$ to be 
\begin{equation}\label{ST110}
\begin{aligned}
\ST_{11}=&~-\sigma\ig\sqrt{g}g^{ij}\nn^{\alpha}\nn^{\beta}\TP^5\TP_j\eta_{\beta}\TP_i(\TP^5 \p_t\eta_{\alpha}-\TP^5\eta\cdot\pa v_{\alpha})\dS\\
=&~\red{-\frac{\sigma}{2}\frac{d}{dt}\ig\sqrt{g}g^{ij}\left(\TP^5\TP_i\eta\cdot\nn\right)\left(\TP^5\TP_j\eta\cdot\nn\right)\dS}-\frac{\sigma}{2}\ig\sqrt{g}\p_t g^{ij}(\TP^5\TP_i\eta\cdot\nn)(\TP^5\TP_j\eta\cdot\nn)\dS\\
&+\sigma\ig g^{ij}\p_t(\underbrace{\sqrt{g}\nn^{\alpha}}_{=A^{3\alpha}})\TP^5\TP_i\eta_{\alpha}(\TP^5\TP_j\eta\cdot\nn)\dS+\sigma\ig\sqrt{g}g^{ij}\nn^{\alpha}\nn^{\beta}\TP^5\TP_j\eta_{\beta}\TP_i\TP^5\eta_{\gamma}A^{\mu\gamma}\p_\mu v_{\alpha}\dS\\
&+\sigma\ig\sqrt{g}g^{ij}\nn^{\alpha}(\nn^{\beta}\TP^5\TP_j\eta_{\beta})(\TP^5\eta\cdot\TP_i(\pa v_{\alpha}))\red{\dS}\\
=:&\ST_{111}+\ST_{112}+\ST_{113}+\ST_{114}+\ST_{115}.
\end{aligned}
\end{equation}

\red{The term$\ST_{111}$ produce the weighted energy term because $\sqrt{g}g^{ij}$ is positive-definite
\begin{equation}\label{ST111}
\int_0^T\ST_{111}\lesssim-\frac12\left|\sqrt{\sigma}\TP^6\eta\cdot\nn\right|_0^2\bigg|^T_0.
\end{equation}}

The terms$\ST_{112}$ and$\ST_{115}$ can be directly controlled
\begin{align}
\label{ST112}\ST_{112}\lesssim &~|\sqrt{g}\p_t g^{ij}|_{L^{\infty}}\left|\sqrt{\sigma}\TP^6\eta\cdot\nn\right|_0^2\lesssim P(E_1(t))(\sigma E_2(t)).\\
\label{ST115}\ST_{115}\lesssim &~|\sqrt{g}g^{ij}\TP(\pa v)|_{L^{\infty}}\left|\sqrt{\sigma}\TP^6\eta\cdot\nn\right|_0\|\sqrt{\sigma}\eta\|_{5.5}\lesssim P(E_1(t))(\sigma E_2(t)).
\end{align}

In$\ST_{113}$, we use \eqref{n}-\eqref{an}, i.e., $\sqrt{g}\nn^{\alpha}=A^{3\alpha}$ and $\p_tA^{3\alpha}=-A^{3\gamma}\p_{\mu}v_{\gamma}A^{\mu\alpha}$ to produce cancellation with$\ST_{114}$
\begin{equation}\label{ST1134}
\ST_{113}=-\sigma\ig g^{ij}A^{3\gamma}\p_{\mu}v_{\gamma}A^{\mu\alpha}\TP^5\TP_i\eta_{\alpha}(\TP^5\TP_j\eta\cdot\nn)\dS=-\ST_{114}.
\end{equation}

Summarizing \eqref{ST110}-\eqref{ST1134}, we conclude the estimate of$\ST_{11}$ by choosing $\eps>0$ sufficiently small
\begin{equation}\label{ST11}
\int_0^T\ST_{11}\dt\lesssim-\frac{\sigma}{2}\left|\TP^5\TP\eta\cdot\nn\right|_0^2\bigg|^T_0+\int_0^TP(E_1(t))(\sigma E_2(t))\dt.
\end{equation}

Next we control$\ST_{12}$. First, we have
\begin{equation}\label{ST120}
\begin{aligned}
\ST_{12}=&~-\sigma\ig\TP_i(\sqrt{g}g^{ij}\nn^{\alpha})(\TP^5\TP_j\eta\cdot\nn)(\TP^5v_{\alpha}-\TP^5\eta\cdot\pa v_{\alpha})\dS\\
&-\sigma\ig\sqrt{g}g^{ij}\nn^{\alpha}(\TP_i \nn^{\beta})\TP^5\TP_j\eta_{\beta}(\TP^5v_{\alpha}-\TP^5\eta\cdot\pa v_{\alpha})\dS\\
=:&~\ST_{121}+\ST_{122}.
\end{aligned}
\end{equation}

The term$\ST_{121}$ can be directly controlled
\begin{equation}\label{ST121}
\ST_{121}\lesssim|\TP_i(\sqrt{g}g^{ij}\nn)|_{L^{\infty}}|\sqrt{\sigma}\TP^6\eta\cdot\nn|_0(|\sqrt{\sigma}\TP^5v|_0+|\sqrt{\sigma}\TP^5\eta|_0|\pa v|_{L^{\infty}})\lesssim P(E_1(t))(\sigma E_2(t)).
\end{equation}

To control$\ST_{122}$, we first integrate $\TP_j$ by parts.
\begin{equation}\label{ST1220}
\begin{aligned}
\ST_{122}=&~-\sigma\ig\sqrt{g}g^{ij}\nn^{\alpha}(\TP_i \nn^{\beta})\TP^5\eta_{\beta}\TP_j\TP^5v_{\alpha}\dS\\
&+\sigma\ig\sqrt{g}g^{ij}\nn^{\alpha}(\TP_i\nn^{\beta})\TP^5\eta_{\beta}\TP_j\TP^5\eta_{\gamma}(A^{\mu\gamma}\p_{\mu}v_{\alpha})\dS\\
&+\sigma\ig\sqrt{g}g^{ij}\nn^{\alpha}(\TP_i\nn^{\beta})\TP^5\eta_{\beta}\TP^5\eta_{\gamma}\TP_j(A^{\mu\gamma}\p_{\mu}v_{\alpha})\dS\\
&+\sigma\ig\TP_j(\sqrt{g}g^{ij}\nn^{\alpha}\TP_i \nn^{\beta})\TP^5\eta_{\beta}(\TP^5v_{\alpha}-\TP^5\eta\cdot\pa v_{\alpha})\dS\\
=:&\ST_{1221}+\ST_{1222}+\ST_{1223}+\ST_{1224}.
\end{aligned}
\end{equation}

The term$\ST_{1223}$ can be directly controlled by the weighted energy 
\begin{equation}\label{ST1223}
\ST_{1223}+\ST_{1224}\lesssim P(E_1(t))(\sigma E_2(t)).
\end{equation}

To control$\ST_{1221}$, we write $v_{\alpha}=\p_t\eta_{\alpha}$ and then integrate $\p_t$ by parts under the time integral. When $\p_t$ falls on $\sqrt{g}\nn^{\alpha}=A^{3\alpha}$, the cancellation structure analogous to \eqref{ST1134} is again produced.
\begin{equation}\label{ST1221}
\begin{aligned}
\int_0^T\ST_{1221}\dt\overset{\p_t}{=}&~\sigma\int_0^T\ig\p_t(g^{ij}\TP_i\nn^{\beta}\TP^5\eta_{\beta})(\TP_j\TP^5\eta_{\alpha}\nn^{\alpha})\dS\\
&+\sigma\int_0^T\ig\underbrace{(-A^{3\gamma}\p_\mu v_{\gamma}A^{\mu\alpha})}_{\p_t(\sqrt{g}\nn^{\alpha})}g^{ij}\TP_i\nn^{\beta}\TP^5\eta_{\beta}\TP_j\TP^5\eta_{\alpha}\dS\\
\lesssim&~\int_0^TP(E_1(t))(\|\sqrt{\sigma}v\|_{5.5}+\|\sqrt{\sigma}\eta\|_{5.5})|\sqrt{\sigma}\TP^6\eta\cdot\nn|_0\dt+(-\ST_{1222}).
\end{aligned}
\end{equation}

Therefore,$\ST_{122}$ is controlled by
\begin{equation}\label{ST122}
\int_0^T\ST_{122}\dt\lesssim\int_0^TP(E_1(t))(\sigma E_2(t))\dt,
\end{equation}which together with \eqref{ST11} and \eqref{ST121} gives the control of$\ST_1$
\begin{equation}\label{ST1}
\int_0^T\ST_1\dt\lesssim~-\frac{\sigma}{2}\ig\left|\TP^6\eta\cdot\nn\right|^2\dS\bigg|^T_0+\int_0^TP(E_1(t))(\sigma E_2(t))\dt.
\end{equation}

It remains to control$\ST_2$ and$\ST_3$ in \eqref{ST0}. From \eqref{ST10}, we find that$\ST_2$ has the same form as$\ST_{12}$, so we omit the analysis of$\ST_{2}$ and only list the result
\begin{equation}\label{ST2}
\int_0^T\ST_{2}\dt\lesssim\int_0^TP(E_1(t))(\sigma E_2(t))\dt.
\end{equation}As for$\ST_3$, we have
\begin{equation}\label{ST30}
\begin{aligned}
\ST_3=&~\sigma\ig\sqrt{g}g^{ij}\nn^{\alpha}(\TP^5\nn^{\beta})\TP_i\TP_j\eta_{\beta}(\TP^5v_{\alpha}-\TP^5\eta\cdot\pa v_{\alpha})\dS\\
&+\sigma\ig\sqrt{g}g^{ij}(\TP^5\nn^{\alpha})\nn^{\beta}\TP_i\TP_j\eta_{\beta}(\TP^5v_{\alpha}-\TP^5\eta\cdot\pa v_{\alpha})\dS\\
&+\sigma\ig\TP^5(\sqrt{g}g^{ij})\nn^{\alpha}\nn^{\beta}\TP_i\TP_j\eta_{\beta}(\TP^5v_{\alpha}-\TP^5\eta\cdot\pa v_{\alpha})\dS\\
&+\sum_{k=1}^4\sigma\ig\TP^k(\sqrt{g}g^{ij})\TP^{5-k}(\nn^{\alpha}\nn^{\beta})\TP_i\TP_j\eta_{\beta}(\TP^5v_{\alpha}-\TP^5\eta\cdot\pa v_{\alpha})\dS\\
=:&\ST_{31}+\ST_{32}+\ST_{33}+\ST_{34},
\end{aligned}
\end{equation}where$\ST_{34}$ can be directly controlled
\begin{equation}\label{ST34}
\ST_{34}\lesssim P(|\eta|_{W^{3,\infty}})|\sqrt{\sigma}\eta|_5(|\sqrt{\sigma}\TP^5v|_0+|\sqrt{\sigma}\TP^5\eta|_0|\pa v|_{L^{\infty}})\lesssim P(E_1(t))(\sigma E_2(t)).
\end{equation}

To control$\ST_{31}$ and$\ST_{32}$, we need to invoke \eqref{tpn} to get
\[
\TP^5\nn^{\alpha}=-\TP^4\left(g^{kl}(\TP\TP_k\eta\cdot\nn)\TP_l\eta^{\alpha}\right)=-g^{kl}(\TP^5\TP_k\eta\cdot\nn)\TP_l\eta^{\alpha}-[\TP^4,g^{kl}\TP_l\eta^{\alpha}](\TP\TP_k\eta\cdot\nn),
\]and thus plug it into$\ST_{31}$ and$\ST_{32}$:
\begin{equation}\label{ST31}
\begin{aligned}
\ST_{31}=&-\sigma\ig\sqrt{g}g^{ij}g^{kl}(\TP^5\TP_k\eta\cdot\nn)\TP_l\eta^{\beta}\TP_i\TP_j\eta_{\beta}\nn_{\alpha}(\TP^5v_{\alpha}-\TP^5\eta\cdot\pa v_{\alpha})\dS\\
&-\sigma\ig\sqrt{g}g^{ij}\left([\TP^4,g^{kl}\TP_l\eta^{\alpha}](\TP\TP_k\eta\cdot\nn)\right)\TP_i\TP_j\eta_{\beta}\nn_{\alpha}(\TP^5v_{\alpha}-\TP^5\eta\cdot\pa v_{\alpha})\dS\\
\lesssim&~P(E_1(t))|\sqrt{\sigma}\TP^6\eta\cdot\nn|_0(|\sqrt{\sigma}\TP^5v|_0+|\sqrt{\sigma}\TP^5\eta|_0|\pa v|_{L^{\infty}})\\
&+P(E_1(t))|\sqrt{\sigma}\TP^5\eta|_0(|\sqrt{\sigma}\TP^5v|_0+|\sqrt{\sigma}\TP^5\eta|_0|\pa v|_{L^{\infty}})\\
\lesssim&~P(E_1(t))(\sigma E_2(t)),
\end{aligned}
\end{equation}and similarly
\begin{equation}\label{ST32}
\begin{aligned}
\ST_{32}\lesssim&~P(E_1(t))|\sqrt{\sigma}\TP^6\eta\cdot\nn|_0(|\sqrt{\sigma}\TP^5v|_0+|\sqrt{\sigma}\TP^5\eta|_0|\pa v|_{L^{\infty}})\\
&+P(E_1(t))|\sqrt{\sigma}\TP^5\eta|_0(|\sqrt{\sigma}\TP^5v|_0+|\sqrt{\sigma}\TP^5\eta|_0|\pa v|_{L^{\infty}})\\
\lesssim&~P(E_1(t))(\sigma E_2(t)),
\end{aligned}
\end{equation}

For$\ST_{33}$, we use the identity \eqref{tp ggij} to get
\begin{align*}
\TP^5(\sqrt{g}g^{ij})=&~\sqrt{g}\left(\frac12 g^{ij}g^{kl}-g^{ik}g^{jl}\right)(\TP^5\TP_k\eta^{\mu}\TP_l\eta_{\mu}+\TP_k\eta^{\mu}\TP^5\TP_l\eta_{\mu})\\
&+\underbrace{\left[\TP^4,\TP_l\eta^{\mu}\left(\frac12 g^{ij}g^{kl}-g^{ik}g^{jl}\right)\right]\TP\TP_k\eta_{\mu}+\left[\TP^4,\TP_k\eta^{\mu}\left(\frac12 g^{ij}g^{kl}-g^{ik}g^{jl}\right)\right]\TP\TP_l\eta_{\mu}}_{R^{ij}_{33}},
\end{align*}and thus
\begin{equation}\label{ST330}
\begin{aligned}
\ST_{33}=&~\sigma\ig\sqrt{g}\left(\frac12 g^{ij}g^{kl}-g^{ik}g^{jl}\right)(\TP^5\TP_k\eta^{\mu}\TP_l\eta_{\mu}+\TP_k\eta^{\mu}\TP^5\TP_l\eta_{\mu})\nn^{\alpha}\nn^{\beta}\TP_i\TP_j\eta_{\beta}(\TP^5v_{\alpha}-\TP^5\eta\cdot\pa v_{\alpha})\dS\\
&+\sigma\ig R_{33}^{ij}\nn^{\alpha}\nn^{\beta}\TP_i\TP_j\eta_{\beta}(\TP^5v_{\alpha}-\TP^5\eta\cdot\pa v_{\alpha})\dS\\
=:&\ST_{331}+\ST_{332}.
\end{aligned}
\end{equation}

The term$\ST_{332}$ can be directly controlled
\begin{equation}\label{ST332}
\ST_{332}\lesssim P(E_1(t))|\sqrt{\sigma}\eta|_5(|\sqrt{\sigma}\TP^5v|_0+|\sqrt{\sigma}\TP^5\eta|_0|\pa v|_{L^{\infty}}).
\end{equation}

In$\ST_{331}$, we should first integrate the derivative $\TP_k$ in $\TP^5\TP_k\eta^{\mu}$ (resp. $\TP_l$ in $\TP^5\TP_l\eta_{\mu}$) by parts
\begin{equation}\label{ST3310}
\begin{aligned}
\ST_{331}=&-\sigma\ig\sqrt{g}\left(\frac12 g^{ij}g^{kl}-g^{ik}g^{jl}\right)\TP^5\eta^{\mu}\TP_l\eta_{\mu}\nn^{\alpha}\nn^{\beta}\TP_i\TP_j\eta_{\beta}\TP_k(\TP^5v_{\alpha}-\TP^5\eta\cdot\pa v_{\alpha})\dS\\
&-\sigma\ig\sqrt{g}\left(\frac12 g^{ij}g^{kl}-g^{ik}g^{jl}\right)\TP_k\eta^{\mu}\TP^5\eta_{\mu}\nn^{\alpha}\nn^{\beta}\TP_i\TP_j\eta_{\beta}\TP_l(\TP^5v_{\alpha}-\TP^5\eta\cdot\pa v_{\alpha})\dS\\
&-\sigma\ig\TP_k\left(\sqrt{g}\left(\frac12 g^{ij}g^{kl}-g^{ik}g^{jl}\right)\TP_l\eta_{\mu}\nn^{\alpha}\nn^{\beta}\TP_i\TP_j\eta_{\beta}\right)\TP^5\eta^{\mu}(\TP^5v_{\alpha}-\TP^5\eta\cdot\pa v_{\alpha})\dS\\
&-\sigma\ig\TP_l\left(\sqrt{g}\left(\frac12 g^{ij}g^{kl}-g^{ik}g^{jl}\right)\TP_k\eta_{\mu}\nn^{\alpha}\nn^{\beta}\TP_i\TP_j\eta_{\beta}\right)\TP^5\eta^{\mu}(\TP^5v_{\alpha}-\TP^5\eta\cdot\pa v_{\alpha})\dS\\
=:&\ST_{3311}+\ST_{3312}+\ST_{3313}+\ST_{3314},
\end{aligned}
\end{equation}where$\ST_{3313}$ and$\ST_{3314}$ can be directly controlled
\begin{equation}\label{ST33134}
\ST_{3313}+\ST_{3314}\lesssim P(E_1(t))(\sigma E_2(t)).
\end{equation}

For$\ST_{3311}$ and$\ST_{3312}$, we need to write $v_{\alpha}=\p_t\eta_{\alpha}$ and then integrate $\p_t$ by parts. For simplicity, we only show the control of$\ST_{3311}$.
\begin{align}
\label{ST33111}\int_0^T\ST_{3311}\dt=&-\sigma\int_0^T\ig\sqrt{g}\left(\frac12 g^{ij}g^{kl}-g^{ik}g^{jl}\right)\TP^5\eta^{\mu}\TP_l\eta_{\mu}\nn^{\alpha}\nn^{\beta}\TP_i\TP_j\eta_{\beta}\TP_k\TP^5\p_t\eta_{\alpha}\dS\\
\label{ST33112}&+\sigma\int_0^T\ig\sqrt{g}\left(\frac12 g^{ij}g^{kl}-g^{ik}g^{jl}\right)\TP^5\eta^{\mu}\TP_l\eta_{\mu}\nn^{\alpha}\nn^{\beta}\TP_i\TP_j\eta_{\beta}\TP_k\TP^5\eta_{\gamma}A^{\mu\gamma}\p_{\mu}v_{\alpha}\dS\\
\label{ST33113}&+\sigma\int_0^T\ig\sqrt{g}\left(\frac12 g^{ij}g^{kl}-g^{ik}g^{jl}\right)\TP^5\eta^{\mu}\TP_l\eta_{\mu}\nn^{\alpha}\nn^{\beta}\TP_i\TP_j\eta_{\beta}\TP^5\eta\cdot\TP_k(\pa v_{\alpha})\dS\\
\label{ST331111}\overset{\p_t}{=}&~\sigma\int_0^T\ig\sqrt{g}\left(\frac12 g^{ij}g^{kl}-g^{ik}g^{jl}\right)\TP^5v^{\mu}\TP_l\eta_{\mu}\nn^{\beta}\TP_i\TP_j\eta_{\beta}(\TP_k\TP^5\eta_{\alpha}\nn^{\alpha})\dS\\
\label{ST331112}&+\sigma\int_0^T\ig\left(\frac12 g^{ij}g^{kl}-g^{ik}g^{jl}\right)\TP^5\eta^{\mu}\TP_l\eta_{\mu}\p_t(\sqrt{g}\nn^{\alpha})\nn^{\beta}\TP_i\TP_j\eta_{\beta}\TP_k\TP^5\eta_{\alpha}\dS\\
\label{ST331113}&+\sigma\int_0^T\ig\sqrt{g}\p_t\left(\left(\frac12 g^{ij}g^{kl}-g^{ik}g^{jl}\right)\TP_l\eta_{\mu}\nn^{\beta}\TP_i\TP_j\eta_{\beta}\right)\TP^5\eta^{\mu}(\nn^{\alpha}\TP_k\TP^5\eta_{\alpha})\dS+\eqref{ST33112}+\eqref{ST33113}.
\end{align}

Note that $\p_tA(\sqrt{g}\nn^{\alpha})=\p_tA^{3\alpha}=-A^{3\gamma}\p_\mu v_{\gamma}A^{\mu\alpha}$, we know $\eqref{ST331112}+\eqref{ST33112}=0$. Then the remaining quantities \eqref{ST33113}, \eqref{ST331111} and \eqref{ST331113} can all be directly controlled
\begin{align}
\label{ST331114} \eqref{ST33113}\lesssim&\int_0^T|\sqrt{\sigma}\eta|_5^2P(E_1(t))\leq \int_0^TP(E_1(t))(\sigma E_2(t))\dt,\\
\label{ST331115} \eqref{ST331111}\lesssim&\int_0^T|\sqrt{\sigma}\TP^6\eta\cdot\nn|_0|\sqrt{\sigma}v|_5P(E_1(t))\leq \int_0^TP(E_1(t))(\sigma E_2(t))\dt,\\
\label{ST331116} \eqref{ST331113}\lesssim&\int_0^T|\sqrt{\sigma}\TP^6\eta\cdot\nn|_0|\sqrt{\sigma}\eta|_5P(E_1(t))\leq \int_0^TP(E_1(t))(\sigma E_2(t))\dt.
\end{align}

Combining \eqref{ST330}-\eqref{ST331116}, we get the control of$\ST_{33}$
\begin{equation}\label{ST33}
\int_0^T\ST_{33}\dt\lesssim\int_0^TP(E_1(t))(\sigma E_2(t))\dt,
\end{equation}whic together with \eqref{ST34}, \eqref{ST31} and \eqref{ST32} gives the control of$\ST_3$
\begin{equation}\label{ST3}
\int_0^T\ST_{3}\dt\lesssim\int_0^TP(E_1(t))(\sigma E_2(t))\dt.
\end{equation}Finally, \eqref{ST0}, \eqref{ST4}, \eqref{ST1}, \eqref{ST2} and \eqref{ST3} gives the control of the boundary terms contributed by the surface tension as well as the $\wt$-weighted boundary energy
\begin{equation}\label{ST}
\int_0^T\ST\dt\lesssim-\frac{\sigma}{2}\ig\left|\TP^6\eta\cdot\nn\right|^2\dS\bigg|^T_0+\int_0^TP(E_1(t))(\sigma E_2(t))\dt.
\end{equation}

\subsubsection{Control of the error terms}\label{sect RTSTerror}
It remains to control $J_1$ and $J_2$ in \eqref{J0}. Note that $q=-\sigma\sqrt{g}\Delta_g\eta\cdot \nn=\sigma Q(\TP\eta)\TP^2\eta\cdot\nn$ on the boundary and $A^{3\alpha}=\TP_1\eta\times\TP_2\eta$. The term $J_2$ can be directly controlled
\begin{equation}\label{J2}
\begin{aligned}
J_2=&~5\sigma\ig \TP^4 A^{3\alpha} \TP(Q(\TP\eta)\TP^2\eta) (\TP^5 v_{\alpha}-\TP^5\eta\cdot\pa v_{\alpha})\dS\\
&+10\sigma\ig \TP^3 A^{3\alpha} \TP^2(Q(\TP\eta)\TP^2\eta) (\TP^5 v_{\alpha}-\TP^5\eta\cdot\pa v_{\alpha})\dS\\
&+10\sigma\ig \TP^2 A^{3\alpha} \TP^3(Q(\TP\eta)\TP^2\eta) (\TP^5 v_{\alpha}-\TP^5\eta\cdot\pa v_{\alpha})\dS\\
&+5\sigma \ig \TP A^{3\alpha} \TP^4 (Q(\TP\eta)(\TP^2\eta\cdot\nn))(\TP^5 v_{\alpha}-\TP^5\eta\cdot\pa v_{\alpha})\dS\\
\lesssim&~  P(|\TP\eta|_{L^{\infty}})|\TP^3\eta|_{L^{\infty}} \left(|\wt\eta|_5+|\wt\TP^6\eta\cdot n|_0\right)\left(|\wt \TP^5 v|_0+|\wt \TP^5\eta|_0|\pa v|_{L^{\infty}}\right)\\
\lesssim&~P(|\TP\eta|_{L^{\infty}})|\TP^3\eta|_{L^{\infty}}|\p v|_{L^{\infty}} \left(\sqrt{E_1}+\sqrt{\sigma E_2}\right)\sqrt{\sigma E_2}\leq P(E_1(t))(\sigma E_2(t)).
\end{aligned}
\end{equation}

The control of $J_1$ needs more delicate computation. Recall that $A^{3\alpha}=(\TP_1\eta\times\TP_2\eta)^{\alpha}$, we have that
\begin{equation}\label{J10}
\begin{aligned}
J_1=&\ig\sigma\hh(\TP^5\TP_1\eta\times\TP_2\eta)\cdot\TP^5\p_t\eta\dS+\ig\sigma\hh(\TP_1\eta\times\TP^5\TP_2\eta)\cdot\TP^5\p_t\eta\dS\\
&-\ig\sigma\hh(\TP^5\TP_1\eta\times\TP_2\eta)\cdot(\TP^5\eta_{\gamma}A^{\mu\gamma}\p_{\mu}v)\dS-\ig\sigma\hh(\TP_1\eta\times\TP^5\TP_2\eta)\cdot(\TP^5\eta_{\gamma}A^{\mu\gamma}\p_{\mu}v)\dS\\
&+\sum_{k=1}^4\ig\sigma\hh(\TP^k\TP_1\eta\times\TP^{4-k}\TP_2\eta)\cdot(\TP^5 v-\TP^5\eta\cdot\pa v)\dS\\
=:&~J_{11}+J_{12}+J_{13}+J_{14}+J_{15}.
\end{aligned}
\end{equation}

Again, the term $J_{15}$ is directly controlled by the weighted energy
\begin{equation}\label{J15}
J_{15}\leq P(E_1(t))(\sigma E_2(t)).
\end{equation}Below we only show the control of $J_{11}$ and $J_{13}$, and the control of $J_{12}$ and $J_{14}$ follows in the same way. For $J_{11}$, we integrate $\p_t$ by parts to get
\begin{equation}\label{J110}
\begin{aligned}
\int_0^TJ_{11}\dt=&-\int_0^T\ig\sigma\hh(\TP^5\TP_1 v\times\TP_2\eta)\cdot\TP^5 \eta\dS-\int_0^T\ig\sigma\hh(\TP^5\TP_1 \eta\times\TP_2v)\cdot\TP^5 \eta\dS\\
&-\int_0^T\ig\sigma\p_t\hh(\TP^5\TP_1 \eta\times\TP_2\eta)\cdot\TP^5 \eta\dS+\ig\sigma\hh(\TP^5\TP_1 \eta\times\TP_2\eta)\cdot\TP^5 \eta\dS\bigg|^T_0\\
=:&~J_{111}+J_{112}+J_{113}+J_{114}.
\end{aligned}
\end{equation}

Next we integrate $\TP_1$ by parts in $J_{111}$ and use the vector identity $(\uu\times\vv)\cdot\ww=-(\uu\times\ww)\cdot\vv$ to get
\begin{equation}\label{J1110}
\begin{aligned}
J_{111}\overset{\TP_1}{=}&\int_0^T\ig\sigma\hh(\TP^5 v\times\TP_2\eta)\cdot\TP_1\TP^5 \eta\dS\dt\\
&+\int_0^T\ig\sigma\TP_1\hh(\TP^5 v\times\TP_2\eta)\cdot\TP^5 \eta\dS\dt+\int_0^T\ig\sigma\hh(\TP^5 v\times\TP_1\TP_2\eta)\cdot\TP^5 \eta\dS\dt\\
=&\underbrace{-\int_0^T\ig\sigma\hh(\TP^5 \TP_1\eta\times\TP_2\eta)\cdot\TP^5 v\dS\dt}_{=-\int_0^T J_{11}\dt}\\
&+\int_0^T\ig\sigma\TP_1\hh(\TP^5 v\times\TP_2\eta)\cdot\TP^5 \eta\dS\dt+\int_0^T\ig\sigma\hh(\TP^5 v\times\TP_1\TP_2\eta)\cdot\TP^5 \eta\dS\dt\\
\lesssim&-\int_0^T J_{11}\dt+\int_0^TP(E_1(t))(\sigma E_2(t)).
\end{aligned}
\end{equation}
Therefore, we have
\begin{equation}\label{J1100}
\int_0^TJ_{11}\dt\lesssim \frac12(J_{112}+J_{113}+J_{114})+\int_0^TP(E_1(t))(\sigma E_2(t))\dt.
\end{equation}

Next we need to control $J_{114}$ by $\PP_0+P(E_1(T))(\sigma E_2(T))\int_0^TP(E_1(t))(\sigma E_2(t))\dt$. For that, we need the following identity
\[
\delta^{\alpha\beta}=\nn^{\alpha}\nn^{\beta}+g^{\ij}\TP_i\eta^{\alpha}\TP_j\eta^{\beta},
\]which yields
\begin{equation}\label{J1140}
\begin{aligned}
J_{114}=&\ig\sigma\hh (\TP^5\TP_1\eta\times\TP_2\eta)_{\alpha}\nn^{\alpha}\nn^{\beta}\TP^5\eta_{\beta}\dS+\ig\sigma\hh (\TP^5\TP_1\eta\times\TP_2\eta)_{\alpha}g^{\ij}\TP_i\eta^{\alpha}\TP_j\eta^{\beta}\TP^5\eta_{\beta}\dS\\
=:&J_{1141}+J_{1142}.
\end{aligned}
\end{equation}

In $J_{1141}$ we integrate $\TP_1$ by parts and \red{use $\eps$-Young's inequality to get}
\begin{equation}\label{J1141}
\begin{aligned}
J_{1141}\overset{\TP_1}{=}&-\ig\sigma\hh (\TP^5\eta\times\TP_2\eta)_{\alpha}\nn^{\alpha}\nn^{\beta}\TP_1\TP^5\eta_{\beta}\dS-\ig\sigma\TP_1(\mathcal{H}\nn^{\alpha}\nn^{\beta}\TP_2\eta)\TP^5\eta\TP^5\eta\dS\\
\lesssim&~\red{P(N_0)\left(|\sqrt{\sigma}\TP^5\eta|_0|\sqrt{\sigma}\TP^6\eta\cdot\nn|_0+\sigma|\TP^5\eta|_0^2\right)}\\
\lesssim&~\red{\eps|\sqrt{\sigma}\TP^6\eta\cdot\nn|_0^2+P(N_0)\sigma\|\eta\|_{5.5}^2\leq\eps|\sqrt{\sigma}\TP^6\eta\cdot\nn|_0^2+P(N_0)\left(\sigma\|\eta_0\|_{5.5}^2+\int_0^T\sigma\|v(t)\|_{5.5}^2\dt\right),}
\end{aligned}
\end{equation}
\red{where the first term on the right side can be absorbed by $E(t)$ if we pick $\eps>0$ suitably small.}

In $J_{1142}$, we notice that the integral vanishes if $i=2$ due to $(\TP^5\TP_1\eta\times\TP_2\eta)\cdot\TP_2\eta=0$. If $i=1$, then it can be controlled in the same way as $J_{1141}$
\begin{equation}\label{J1142}
\begin{aligned}
J_{1142}=&\ig\sigma\hh (\TP^5\TP_1\eta\times\TP_2\eta)\cdot\TP_1\eta~g^{1j}\TP_j\eta^{\beta}\TP^5\eta_{\beta}\dS=-\ig\sigma\hh \underbrace{(\TP_1\eta\times\TP_2\eta)}_{=\sqrt{g}\nn}\cdot\TP^5\TP_1\eta~g^{1j}\TP_j\eta^{\beta}\TP^5\eta_{\beta}\dS\\
\lesssim&~\red{P(N_0)|\sqrt{\sigma}\TP^5\TP_1\eta\cdot\nn|_0|\sqrt{\sigma}\TP^5\eta|_0\lesssim \eps|\sqrt{\sigma}\TP^6\eta\cdot\nn|_0^2+P(N_0)\left(\sigma\|\eta_0\|_{5.5}^2+\int_0^T\sigma\|v(t)\|_{5.5}^2\dt\right).}
\end{aligned}
\end{equation}

The term $J_{113}$ can be controlled in the same way as $J_{114}$ so we omit the proof. Thus we already get
\begin{equation}\label{J1134}
J_{113}+J_{114}\lesssim P(E(T))\int_0^T P(E(t))\dt.
\end{equation}

It remains to analyze $\dfrac{1}{2}J_{112}$ which should be controlled together with $J_{13}$. Again we have
\begin{equation}\label{J1120}
\begin{aligned}
\frac12 J_{112}=&\frac12 \int_0^T\ig\sigma\hh(\TP^5\TP_1 \eta\times\TP_2v)_{\alpha}\nn^{\alpha}\nn^{\beta}\TP^5 \eta_{\beta}\dS+\frac12 \int_0^T\ig\sigma\hh(\TP^5\TP_1 \eta\times\TP_2v)_{\alpha}g^{ij}\TP_i\eta^{\alpha}\TP_j\eta^{\beta}\TP^5 \eta_{\beta}\dS\\
=:&~J_{1121}+J_{1122},
\end{aligned}
\end{equation}and the control of $J_{1121}$ follows in the same way as \eqref{J1141} by integrating $\TP_1$ by parts
\begin{equation}\label{J1121}
J_{1121}\lesssim\int_0^TP(E_1(t))(\sigma E_2(t)) \dt,
\end{equation}

For $J_{1122}$ we need to do further decomposition
\begin{equation}\label{J11220}
\begin{aligned}
J_{1122}=&-\frac12 \int_0^T\ig\sigma\hh(\TP_i\eta\times\TP_2v)_{\alpha}g^{ij}\TP^5\TP_1 \eta^{\alpha}\TP_j\eta^{\beta}\TP^5 \eta_{\beta}\dS\dt\\
=&-\frac12\int_0^T\ig\sigma\hh \left((\TP_i\eta\times\TP_2v)_{\gamma}\nn^{\gamma}\nn^{\alpha}\TP^5\TP_1\eta_{\alpha}\right)g^{ij}\TP_j\eta^{\beta}\TP^5 \eta_{\beta}\dS\dt\\
&-\frac12\int_0^T\ig\sigma\hh \left((\TP_i\eta\times\TP_2v)_{\gamma}g^{kl}\TP_k\eta^{\gamma}\TP_l\eta^{\alpha}\TP^5\TP_1\eta_{\alpha}\right)g^{ij}\TP_j\eta^{\beta}\TP^5 \eta_{\beta}\dS\dt\\
=:&~J_{11221}+J_{11222},
\end{aligned}
\end{equation}where $J_{11221}$ is directly controlled by
\begin{equation}\label{J11221}
J_{11221}\lesssim\int_0^TP(E_1(t))|\sqrt{\sigma}\TP^5\TP_1\eta\cdot\nn|_0|\sqrt{\sigma}\TP^5\eta|_0 \dt\lesssim\int_0^TP(E_1(t))+\sigma E_2(t) \dt.
\end{equation}

In $J_{11222}$, the integral vanishes if $i=k$, so we only need to investigate the cases $(i,k)=(1,2)$ and $(i,k)=(2,1)$, which contribute to
\begin{equation}\label{J11222}
\begin{aligned}
J_{11222}=&-\frac12\int_0^T\ig\sigma\hh \left((\TP_1\eta\times\TP_2v)_{\gamma}g^{2l}\TP_2\eta^{\gamma}\TP_l\eta^{\alpha}\TP^5\TP_1\eta_{\alpha}\right)g^{1j}\TP_j\eta^{\beta}\TP^5 \eta_{\beta}\dS\dt\\
&-\frac12\int_0^T\ig\sigma\hh \left((\TP_2\eta\times\TP_2v)_{\gamma}g^{1l}\TP_1\eta^{\gamma}\TP_l\eta^{\alpha}\TP^5\TP_1\eta_{\alpha}\right)g^{2j}\TP_j\eta^{\beta}\TP^5 \eta_{\beta}\dS\dt\\
=&-\frac12\int_0^T\ig\sigma\hh \left((\TP_1\eta\times\TP_2v)\cdot\TP_2\eta\right)\left(g^{2l}\TP_l\eta\cdot\TP^5\TP_1\eta\right)\left(g^{1j}\TP_j\eta\cdot\TP^5 \eta\right)\dS\dt\\
&+\frac12\int_0^T\ig\sigma\hh \left((\TP_1\eta\times\TP_2v)\cdot\TP_2\eta\right)\left(g^{1l}\TP_l\eta\cdot\TP^5\TP_1\eta\right)\left(g^{2j}\TP_j\eta\cdot\TP^5 \eta\right)\dS\dt.
\end{aligned}
\end{equation}

Next, we analyze $J_{13}$. First, we do the following decomposition
\begin{align*}
\TP^5\eta_{\gamma}A^{\mu\gamma}\p_{\mu}v_{\alpha}=&~\TP^5\eta_{\beta}\nn^{\beta}\nn_{\gamma}A^{\mu\gamma}\p_{\mu}v_{\alpha}+\TP^5\eta_{\beta}g^{ij}\TP_i\eta^{\beta}\underbrace{\TP_j\eta_{\gamma}A^{\mu\gamma}}_{=\delta^{\mu}_j}\p_{\mu}v_{\alpha}\\
=&~\TP^5\eta_{\beta}\nn^{\beta}\nn_{\gamma}A^{\mu\gamma}\p_{\mu}v_{\alpha}+\TP^5\eta_{\beta}g^{ij}\TP_i\eta^{\beta}\TP_jv_{\alpha},
\end{align*}and thus
\begin{equation}\label{J130}
\begin{aligned}
J_{13}=&~\ig\sigma\hh(\TP^5\TP_1\eta\times\TP_2\eta)\cdot(\TP^5\eta_{\beta}\nn^{\beta}\nn_{\gamma}A^{\mu\gamma}\p_{\mu}v)\dS+\ig\sigma\hh(\TP^5\TP_1\eta\times\TP_2\eta)\cdot(\TP^5\eta_{\beta}g^{ij}\TP_i\eta^{\beta}\TP_jv)\dS\\
=:&~J_{131}+J_{132}.
\end{aligned}
\end{equation}

The term $J_{131}$ can be controlled similarly as $J_{1141}$ in \eqref{J1141}, i.e., integrating $\TP_1$ by parts,
\begin{equation}\label{J131}
J_{131}\lesssim P(E_1(t))(\sigma E_2(t)).
\end{equation}

In $J_{132}$, we need to do further decomposition
\begin{equation}\label{J1320}
\begin{aligned}
J_{132}=&\ig\sigma\hh(\TP^5\TP_1\eta\times\TP_2\eta)_{\gamma}\nn^{\gamma}\nn^{\alpha}(\TP^5\eta_{\beta}g^{ij}\TP_i\eta^{\beta}\TP_jv_{\alpha})\dS+\ig\sigma\hh(\TP^5\TP_1\eta\times\TP_2\eta)_{\gamma}g^{kl}\TP_k\eta^{\gamma}\TP_l\eta^{\alpha}(\TP^5\eta_{\beta}g^{ij}\TP_i\eta^{\beta}\TP_jv_{\alpha})\dS\\
=:&~J_{1321}+J_{1322}.
\end{aligned}
\end{equation}

The integral in $J_{1322}$ vanishes if $k=2$. When $k=1$, we again use the vector identity $(\uu\times\vv)\cdot\ww=-(\uu\times\ww)\cdot\vv$ and invoke $(\TP_1\eta\times\TP^2\eta)=\sqrt{g}\nn$ to get
\begin{equation}\label{J1322}
\begin{aligned}
J_{1322}=&\ig\sigma\hh(\TP^5\TP_1\eta\times\TP_2\eta)_{\gamma}g^{1l}\TP_1\eta^{\gamma}\TP_l\eta^{\alpha}(\TP^5\eta_{\beta}g^{ij}\TP_i\eta^{\beta}\TP_jv_{\alpha})\dS\\
=&-\ig\sigma\hh\left((\TP_1\eta\times\TP_2\eta)\cdot\TP^5\TP_1\eta\right)g^{1l}\TP_l\eta^{\alpha}(\TP^5\eta_{\beta}g^{ij}\TP_i\eta^{\beta}\TP_jv_{\alpha})\dS\\
\lesssim&~|\sqrt{\sigma}\TP^5\TP_1\eta\cdot\nn|_0|\sqrt{\sigma}\TP^5\eta|_0 |\hh~g^2\TP\eta~\TP\eta~\TP v|_{L^{\infty}}\lesssim P(E_1(t))(\sigma E_2(t))
\end{aligned}
\end{equation}

We recall $\nn_{\gamma}=\sqrt{g}^{-1}(\TP_1\eta\times\TP_2\eta)_{\gamma}$ and use the vector identities $(\uu\times\vv)\cdot\ww=-(\uu\times\ww)\cdot\vv$ and $\uu\times(\vv\times\ww)=(\uu\cdot\ww)\vv-(\uu\cdot\vv)\ww$ to get
\begin{align*}
(\TP^5\TP_1\eta\times\TP_2\eta)\cdot(\TP_1\eta\times\TP_2\eta)=&-(\TP^5\TP_1\eta\times(\TP_1\eta\times\TP_2\eta))\cdot\TP_2\eta\\
=&-(\TP^5\TP_1\eta\cdot\TP_2\eta)\underbrace{(\TP_1\eta\cdot\TP_2\eta)}_{=g_{12}=-(\det g)g^{12}}+(\TP^5\TP_1\eta\cdot\TP_1\eta)\underbrace{(\TP_2\eta\cdot\TP_2\eta)}_{=g_{22}=(\det g) g^{11}}.
\end{align*}Plugging this into $J_{1321}$ yields
\begin{equation}\label{J13210}
\begin{aligned}
J_{1321}=&\ig\sigma\hh(\TP^5\TP_1\eta\times\TP_2\eta)\cdot(\TP_1\eta\cdot\TP_2\eta)\sqrt{g}^{-1}\nn^{\alpha}(\TP^5\eta_{\beta}g^{ij}\TP_i\eta^{\beta}\TP_jv_{\alpha})\dS\\
=&\ig\sigma\hh(g^{1l}\TP_l\eta\cdot\TP^5\TP_1\eta )(g^{ij}\TP_i\eta\cdot\TP^5\eta)(\TP_jv_{\alpha}\nn^{\alpha}\sqrt{g})\dS\\
=&\ig\sigma\hh(g^{1l}\TP_l\eta\cdot\TP^5\TP_1\eta )(g^{1i}\TP_i\eta\cdot\TP^5\eta)(\TP_1v\cdot(\TP_1\eta\times\TP_2\eta))\dS+\ig\sigma\hh(g^{1l}\TP_l\eta\cdot\TP^5\TP_1\eta )(g^{2i}\TP_i\eta\cdot\TP^5\eta)(\TP_2v\cdot(\TP_1\eta\times\TP_2\eta))\dS\\
=:&~J_{13211}+J_{13212}.
\end{aligned}
\end{equation}

Integrating $\TP_1$ by parts in $J_{13211}$, the highest order term is the same as $J_{13211}$ itself but with a minus sign. Therefore,
\begin{equation}\label{J13211}
\begin{aligned}
J_{13211}=&-\frac12\ig\sigma(g^{1l}\TP_l\eta\cdot\TP^5\eta)(g^{1i}\TP_i\eta\cdot\TP^5\eta)\TP_1(\hh\TP_1v\cdot(\TP_1\eta\times\TP_2\eta))\dS\\
\lesssim&~|\sqrt{\sigma}\TP^5\eta|_0^2 P(|\TP\eta|_{L^{\infty}})(|\TP^2\eta~\TP v|_{L^{\infty}})\lesssim P(E_1(t))(\sigma E_2(t)).
\end{aligned}
\end{equation}

Now $J_{13212}$ reads
\begin{equation}\label{J13212}
J_{13212}=-\ig\sigma\hh(g^{1l}\TP_l\eta\cdot\TP^5\TP_1\eta )(g^{2i}\TP_i\eta\cdot\TP^5\eta)((\TP_1\eta\times\TP_2v)\cdot\TP_2\eta)\dS,
\end{equation}which together with \eqref{J11222} yields that
\begin{equation}
\begin{aligned}
J_{11222}+J_{13212}=&-\frac12\ig\sigma\hh(g^{1l}\TP_l\eta\cdot\TP^5\TP_1\eta )(g^{2i}\TP_i\eta\cdot\TP^5\eta)((\TP_1\eta\times\TP_2v)\cdot\TP_2\eta)\dS\\
&-\frac12\ig\sigma\hh(g^{1l}\TP_l\eta\cdot\TP^5\eta )(g^{2i}\TP_i\eta\cdot\TP^5\TP_1\eta)((\TP_1\eta\times\TP_2v)\cdot\TP_2\eta)\dS,
\end{aligned}
\end{equation}and thus integrating $\TP_1$ by parts in the first integral yields the cancellation with the second integral due to the symmetry
\begin{equation}\label{J11J13}
\begin{aligned}
J_{11222}+J_{13212}=&~\frac12\ig\sigma\hh(\TP_1(g^{1l}\TP_l\eta)\cdot\TP^5\eta)(g^{2i}\TP_i\eta\cdot\TP^5\eta)((\TP_1\eta\times\TP_2v)\cdot\TP_2\eta)\dS\\
&+\frac12\ig\sigma\hh(g^{1l}\TP_l\eta\cdot\TP^5\eta )(\TP_1(g^{2i}\TP_i\eta)\cdot\TP^5\eta)((\TP_1\eta\times\TP_2v)\cdot\TP_2\eta)\dS\\
&+\frac12\ig\sigma\hh(g^{1l}\TP_l\eta\cdot\TP^5\eta )(g^{2i}\TP_i\eta\cdot\TP^5\eta)\TP_1((\TP_1\eta\times\TP_2v)\cdot\TP_2\eta)\dS\\
\lesssim&~|\sqrt{\sigma}\TP^5\eta|_0^2 P(E_1(t))\leq P(E_1(t))(\sigma E_2(t)).
\end{aligned}
\end{equation}

Summarizing \eqref{J10}-\eqref{J110}, \eqref{J1134}-\eqref{J11221}, \eqref{J130}-\eqref{J11J13}, we conclude the estimate of $J_1$ by
\begin{equation}\label{J1}
\int_0^TJ_1\dt\lesssim P(E_1(T))(\sigma E_2(T))\int_0^TP(E_1(t))\dt+\int_0^TP(E_1(t))(\sigma E_2(t))\dt.
\end{equation}

Finally, combining \eqref{J0}, \eqref{RT}, \eqref{ST}, \eqref{J2} and \eqref{J1}, we conclude the $\TP^5$-boundary estimate by
\begin{equation}\label{J}
\int_0^TJ\dt\lesssim -\frac{c_0}{4}\left|\TP^5\eta\cdot \nn\right|_0^2-\frac{\sigma}{2}\left|\TP^6\eta\cdot\nn\right|_0^2+P(E_1(T))(\sigma E_2(T))\int_0^TP(E_1(t))\dt+\int_0^TP(E_1(t))(\sigma E_2(t))\dt.
\end{equation}

\subsection{Finalizing the tangential estimate of spatial derivatives}\label{sect tgv5b5}

Summarizing \eqref{goodenergy0}-\eqref{I1} and \eqref{J}, we conclude the estimate of the Alinhac good unknowns by
\begin{equation}\label{goodenergy}
\|\VV\|_0^2+\frac{c_0}{4}\left|\TP^5\eta\cdot \nn\right|_0^2+\frac{\sigma}{2}\left|\TP^6\eta\cdot\nn\right|_0^2\bigg|_{t=T}\lesssim P(\|v_0\|_5)+P(E_1(T))(\sigma E_2(T))\int_0^TP(E_1(t))\dt+\int_0^TP(E_1(t))(\sigma E_2(t))\dt.
\end{equation}Finally, from the definition of the good unknowns \eqref{good5} and $\TP^5\eta|_{t=0}=\mathbf{0}$, we know 
\[
\|\TP^5 v(T)\|_0^2\lesssim\|\VV(T)\|_0^2+\|\TP^5\eta(T)\|_0^2\|\pa v(T)\|_{L^{\infty}}^2\lesssim \|\VV(T)\|_0^2+ P(E_1(T))\int_0^T\|\TP^5 v(t)\|_0^2\dt,
\]and thus
\begin{equation}\label{tgs5}
\|\TP^5 v\|_0^2+\|\TP^5\bp\eta\|_0^2+\frac{c_0}{4}\left|\TP^5\eta\cdot \nn\right|_0^2+\frac{\sigma}{2}\left|\TP^6\eta\cdot\nn\right|_0^2\bigg|_{t=T}\lesssim P(\|v_0\|_5)+P(E_1(T))(\sigma E_2(T))\int_0^TP(E_1(t))(\sigma E_2(t))\dt.
\end{equation}

\section{Tangential estimates of time derivatives}\label{sect tgmix}

Now we consider the tangential estimates involving time derivatives. Due to the appearance of time derivatives, we no longer need the Alinhac good unknowns thanks to the fact that $A$ has the same spatial regularity as $\p_t A$. Instead, we adapt similar techniques developed in our previous work \cite{luozhangMHDST3.5} to reveal the subtle cancellation structure and $\wt$-weighted energy terms on the boundary. 

\subsection{Full time derivatives}\label{sect tgt5}

First, we do the $\p_t^5$-estimate which is the most difficult part in the tangential estimates of time derivatives. The reason is two-fold
\begin{itemize}
\item We do not have any estimate for the full-time derivatives of $q$,
\item The full-time derivatives $\p_t^5 v$ and $\p_t^5\bp\eta$ only have $L^2$ interior regularity and thus have no control on the boundary due to the failure of the trace lemma.
\end{itemize}

From the second equation of \eqref{MHDLST}, we have
\begin{equation}\label{tgt50}
\begin{aligned}
&\frac12\|\p_t^5v\|_0^2+\frac12\|\p_t^5\bp\eta\|_0^2\bigg|^T_0\\
=&\underbrace{-\int_0^T\io\p_t^5(A^{\mu\alpha}\p_\mu q)\p_t^5v_{\alpha}\dy\dt}_{=:I}\\
&+\int_0^T\io\bp\p_t^5(\bp\eta^{\alpha})\p_t^5v_{\alpha}\dy\dt+\int_0^T\io\p_t^5(\bp v)\cdot\p_t^5(\bp\eta)\dy\dt.
\end{aligned}
\end{equation}

It is not difficult to see that the second integral cancels with the third one after integrating $\bp$ by parts. Therefore it suffices to control $I$ that reads
\begin{equation}\label{I0}
\begin{aligned}
I=&-\int_0^T\io A^{\mu\alpha}\p_t^5\p_\mu q\p_t^5v_{\alpha}\dy\dt-5\int_0^T\io \p_tA^{\mu\alpha}\p_t^4\p_\mu q\p_t^5v_{\alpha}\dy\dt\\
&-10\int_0^T\io \p_t^2A^{\mu\alpha}\p_t^3\p_\mu q\p_t^5v_{\alpha}\dy\dt-10\int_0^T\io \p_t^3A^{\mu\alpha}\p_t^2\p_\mu q\p_t^5v_{\alpha}\dy\dt\\
&-5\int_0^T\io \p_t^4A^{\mu\alpha}\p_t\p_\mu q\p_t^5v_{\alpha}\dy\dt-\int_0^T\io \p_t^5A^{\mu\alpha}\p_\mu q\p_t^5v_{\alpha}\dy\dt\\
=:&~I_1+I_2+I_3+I_4+I_5+I_6,
\end{aligned}
\end{equation}where $I_2\sim I_5$ can all be directly controlled
\begin{equation}\label{I25}
I_2+\cdots+I_5\lesssim \int_0^T P(\|\p_t^5v\|_0,\|\p_t^4 v,\p_t^4 q\|_1,\|\p_t^3 v,\p_t^3 q\|_2,\|\p_t^2v,\p_t^2 q\|_3,\|\p_t v,\p_t q,v,q,\eta\|_3)\dt
\end{equation}

For $I_1$, we integrate $\p_{\mu}$ by parts and then invoke the surface tension equation to get
\begin{equation}\label{I10}
\begin{aligned}
I_1=&-\int_0^T\ig A^{3\alpha}\p_t^5 q\p_t^5v_{\alpha}\dS\dt+\int_0^T\io A^{\mu\alpha}\p_t^5q\p_{\mu}\p_t^5v_{\alpha}\dy\dt\\
=&-\int_0^T\ig \p_t^5(A^{3\alpha} q)\p_t^5v_{\alpha}\dS\dt+\int_0^T\ig\p_t^5A^{3\alpha}q\p_t^5v_{\alpha}\dS\dt\\
&+\sum_{k=1}^4\binom{5}{k}\int_0^T\ig\p_t^kA^{3\alpha}\p_t^{5-k}q\p_t^5v_{\alpha}\dS\dt+\int_0^T\red{\io} A^{\mu\alpha}\p_t^5q\p_{\mu}\p_t^5v_{\alpha}\dS\dt\\
=:&~I_{11}+I_{12}+I_{13}+I_{14}.
\end{aligned}
\end{equation}

Apart from $I_{11}$, the most difficult term is $I_{12}$ since $\p_t^5 v$ cannot be controlled on the boundary. However, to produce a cancellation, we can integrate $\p_{\mu}$ by parts in $I_{6}$. Invoking Piola's identity and the boundary conditions on $\Gamma_0$, we have
\begin{equation}\label{I60}
\begin{aligned}
I_6\overset{\p_{\mu}}{=}&-\int_0^T\ig\p_t^5A^{3\alpha} q \p_t^5v_{\alpha}\dS\dt+\underbrace{\int_0^T\io\p_t^5A^{\mu\alpha}q\p_t^5\p_{\mu}v_{\alpha}\dy\dt}_{=:I_{61}}\\
=&-I_{12}+I_{61}.
\end{aligned}
\end{equation}

Next we control $I_{61}$. Recall that $A^{1\cdot}=\TP_2\eta\times\p_3\eta,~A^{2\cdot}=\p_3\eta\times\TP_1\eta,~A^{3\cdot}=\TP_1\eta\times\TP_2\eta$ which implies
\begin{equation}\label{I61}
\begin{aligned}
I_{61}=&\int_0^T\io q\p_t^5A^{1\alpha}\p_t^5\TP_1v_{\alpha}+q\p_t^5A^{2\alpha}\p_t^5\TP_2v_{\alpha}+q\p_t^5A^{3\alpha}\p_t^5\p_3v_{\alpha}\dy\dt\\
=&\int_0^T\io q(\p_t^4\TP_2 v\times \p_3\eta)\cdot\TP_1\p_t^5 v\dy\dt+\int_0^T\io q(\TP_2 \eta\times \p_t^4\p_3v )\cdot\TP_1\p_t^5 v\dy\dt\\
&+\int_0^T\io q(\p_t^4\p_3 v\times \TP_1\eta)\cdot\TP_2\p_t^5 v\dy\dt+\int_0^T\io q(\p_3 \eta\times \p_t^4\TP_1v )\cdot\TP_2\p_t^5 v\dy\dt\\
&+\int_0^T\io q(\p_t^4\TP_1 v\times \TP_2\eta)\cdot\p_3\p_t^5 v\dy\dt+\int_0^T\io q(\TP_1 \eta\times \p_t^4\TP_2v )\cdot\p_3\p_t^5 v\dy\dt\\
&+\text{ lower order terms}=:I_{611}+\cdots+I_{616}+\text{ lower order terms}.
\end{aligned}
\end{equation} From the vector identity $(\uu\times \vv)\cdot\ww=(\vv\times\ww)\cdot\uu$, we can divide these 6 terms into 3 pairs: $I_{611}+I_{614}$, $I_{612}+I_{615}$ and $I_{613}+I_{616}$.
\begin{align}
\label{I6a} I_{611}+I_{614}=&\underbrace{\io q(\p_t^4\TP_2 v\times \p_3\eta)\cdot\TP_1\p_t^4 v\dy\bigg|^T_0}_{I_{6a}}+\text{ lower order terms},\\
\label{I6b} I_{612}+I_{615}=&\underbrace{\io q(\p_t^4\p_3 v\times \TP_1\eta)\cdot\TP_2\p_t^4 v\dy\bigg|^T_0}_{I_{6b}}+\text{ lower order terms},\\
\label{I6c} I_{613}+I_{616}=&\underbrace{\io q(\p_t^4\TP_1 v\times \TP_2\eta)\cdot\p_3\p_t^4 v\dy\bigg|^T_0}_{I_{6c}}+\text{ lower order terms}.
\end{align}

Now we shall control $I_{6a},~I_{6b},~I_{6c}$ by $\PP_0+P(E_1(T))\int_0^TP(E_1(t))\dt$. For $I_{6a}$, we can equivalently write it to be
\[
I_{6a}=\io q\epsilon^{\alpha\beta\gamma}\p_t^4\TP_2v_{\alpha}\p_3\eta_{\beta}\TP_1\p_t^4v_{\gamma}\dy.
\]

\red{We take $\beta=1$ for an example and the cases $\beta=2,3$ are treated in the same way.} When $\beta=1$, $\alpha,\gamma$ can only be 2 or 3. Note that $\epsilon^{312}=-\epsilon^{213}=1$, we know $I_{6a}|_{\beta=1}$ only consists of two terms
\begin{equation}\label{I6a10}
I_{6a}|_{\beta=1}=\io q \p_3\eta_1 \TP_1\p_t^4v_3\TP_2\p_t^4v_2\dy-\io q \p_3\eta_1 \TP_1\p_t^4v_1\TP_2\p_t^4v_3\dy.
\end{equation}Then we integrate $\TP_2$ by parts in the first term and $\TP_1$ in the second term. We notice that the highest order terms cancel with each other
\red{\begin{equation}\label{I6a1}
\begin{aligned}
I_{6a}|_{\beta=1}=&-\io q\p_3\eta_1(\underbrace{\TP_1\TP_2\p_t^4v_3\p_t^4v_2-\p_t^4v_2\TP_1\TP_2\p_t^4v_3}_{=0})\dy\\
&-\io\TP_2 (q\p_3\eta_1)\TP_1\p_t^4v_1\p_t^4v_2-\TP_1 (q\p_3\eta_2)\p_t^4v_2\TP_2\p_t^4v_1\dy+\text{ lower order terms}\\
\lesssim&~\|q\p\eta\|_3\|\p_t^4 v\|_0\|\p_t^4 v\|_1\lesssim\eps\|\p_t^4 v\|_1^2+\PP_0+\int_0^T P(E_1(t))\dt.
\end{aligned}
\end{equation}}

The estimates of $I_{6b}$ and $I_{6c}$ can proceed in the same way, but integrating by parts in $y_3$ yields an extra boundary term.  \red{We only show how to control $I_{6b}$ when $\beta=1$. Again we can integrate $\p_3,\TP_1$ by parts in each term respectively and produce similar cancellation as in \eqref{I6a1}. We also have to control the boundary term as follows
\begin{equation}\label{I6b1}
\begin{aligned}
I_{6b1}:=&\ig q\TP_1\eta_1\p_t^4v_2\TP_2\p_t^4v_3\dS=\ig \sigma\hh\TP_1\eta_1\p_t^4v_2\TP_2\p_t^4v_3\dS\\
\lesssim&~\sqrt{\sigma}\|\sqrt{\sigma}\p_t^4 v\|_{1.5}\|\p_t^4 v\|_{0.5}|\TP^2\eta|_{L^{\infty}}|\TP\eta|_{L^{\infty}}\lesssim\varepsilon\|\sqrt{\sigma}\p_t^4 v\|_{1.5}^2+\sigma P(N_0)\|\p_t^4 v\|_{0.5}^2,
\end{aligned}
\end{equation}
where the first term can be absorbed by $\sigma E_2(t)$ and the second term can be controlled either using smallness of $\sigma$ or using interpolation (not necessarily for small $\sigma$)
\begin{equation}\label{I6b'}
\|\p_t^4 v\|_{0.5}^2\lesssim\|\p_t^4 v\|_{1}\|\p_t^4 v\|_{0}\lesssim\varepsilon\|\p_t^4 v\|_1^2+\|\p_t^4v\|_0^2\lesssim\varepsilon E_1(t)+\PP_0+\int_0^T\|\p_t^5 v\|_0^2\dt.
\end{equation}}

Therefore, we get 
\begin{equation}\label{I6}
I_{6}+I_{12}\lesssim\eps(\|\p_t^4 v\|_1^2+\red{\|\sqrt{\sigma}\p_t^4 v\|_{1.5}^2})+\PP_0+P(E_1(T))(\sigma E_2(T))\int_0^T P(E_1(t))\dt.
\end{equation}

We then start to control $I_{11}$ in \eqref{I10} which is expected to produce the weighted boundary energy $\sigma|\TP\p_t^4v\cdot\nn|_0^2$. Below we use  $\eql$ to denote equality modulo error terms that are effective of lower order. Plugging the surface tension equation $A^{3\alpha}q=-\sigma\sqrt{g}\Delta_g\eta^{\alpha}$ and the identity \eqref{tplapg}, we get
\begin{equation}\label{I110}
\begin{aligned}
I_{11}\eql&-\sigma\int_0^T\ig\sqrt{g}g^{ij}\nn^{\alpha}\nn^{\beta}\p_t^4\TP_jv_{\beta}\TP_i\p_t^5v_{\alpha}\dS\dt\\
&-\sigma\int_0^T\ig\sqrt{g}(g^{ij}g^{kl}-g^{ik}g^{jl})\TP_j\eta^{\alpha}\TP_k\eta^{\lambda}\TP_l\p_t^4v^{\lambda}\TP_i\p_t^5v_{\alpha}\dS\dt\\
=:&~I_{111}+I_{112}.
\end{aligned}
\end{equation}
And the term $I_{111}$ contributes to the weighted boundary energy
\red{\begin{equation}\label{I111}
\begin{aligned}
I_{111}=&-\frac{\sigma}{2}\ig\sqrt{g}g^{ij}\left(\p_t^4\TP_i v\cdot\nn\right)\left(\p_t^4\TP_j  v\cdot\nn\right)\dS\bigg|^T_0+\sigma\int_0^T\ig\p_t(\sqrt{g}g^{ij}\nn^{\alpha}\nn^{\beta})\p_t^4\TP_i v_{\alpha}\p_t^4\TP v_{\beta}\dS\dt\\
=&: I_{1111}+I_{1112}
\end{aligned}
\end{equation}where $I_{1111}$ gives the $\sqrt{\sigma}$-weighted boundary energy because $\sqrt{g}g^{ij}$ is positive-definite.
\begin{equation}\label{I1111}
I_{1111}\lesssim -\frac{\sigma}{2} \left|\sqrt{\sigma}\TP\p_t^4 v\cdot\nn\right|_0^2\bigg|^T_0,
\end{equation}}and $I_{1112}$ is directly controlled by the weighted energy
\begin{equation}\label{I1112}
I_{1112}\lesssim\int_0^T|P(\TP\eta)\TP v|_{L^{\infty}}\|\sqrt{\sigma} \p_t^4 v\|_{1.5}^2\dt\lesssim \int_0^TP(E_1(t))(\sigma E_2(t))\dt.
\end{equation}

The control of $I_{112}$ requires a remarkable identity, first introduced by Coutand-Shkoller \cite[(12.10)-(12.11)]{coutand2007LWP} to rewrite $I_{112}$. Here we only list the result and we refer to our previous work \cite[(4.33)-(4.37)]{GLZ} for the details.
\begin{equation}\label{I1120}
I_{112}=\int_0^T\ig\frac{\sigma}{\sqrt{g}}(\p_t\det\AAA^{1}+\det\AAA^2+\det\AAA^3),
\end{equation}where
\[\AAA^1=
\begin{bmatrix}
\TP_1\eta_{\mu}\p_t^4\TP_1 v^{\mu}&\TP_1\eta_{\mu}\TP_2\p_t^4 v^{\mu}\\
\TP_2\eta_{\mu}\p_t^4\TP_1 v^{\mu}&\TP_2\eta_{\mu}\TP_2\p_t^4 v^{\mu}
\end{bmatrix},
\] and $\AAA^2_{ij}=\TP_i v_{\mu}\TP_j\p_t^4 v^{\mu}$ and $\AAA^{3}_{ij}=\TP_i\eta_{\mu}\TP_j\p_t^4 v^{\mu}$. Under this setting we have
\begin{equation}\label{I112}
I_{112}\lesssim\epsilon\|\wt\p_t^4 v\|_{1.5}^2+\PP_0+\int_0^T\|\wt \p_t^4 v\|_{1.5}^2 P(E_1(t))\dt.
\end{equation}

It now remains to control $I_{13}$ in \eqref{I10}. First we recall that $A^{3\cdot}=\TP_1\eta\times\TP_2\eta$.
\begin{equation}\label{I130}
\begin{aligned}
I_{13}\eql&~5\int_0^T\ig \p_t q (\p_t^3\TP v\times\TP\eta)\cdot\p_t^5 v\dS\dt+10\int_0^T\ig \p_t^2 q (\p_t^2\TP v\times\TP\eta)\cdot\p_t^5 v\dS\dt\\
&+10\int_0^T\ig \p_t^3 q (\p_t\TP v\times\TP\eta)\cdot\p_t^5 v\dS\dt+5\int_0^T\ig \p_t^4 q (\TP v\times\TP\eta)\cdot\p_t^5 v\dS\dt\\
=:&~I_{131}+\cdots+I_{134}.
\end{aligned}
\end{equation}
We only show the control of $I_{131}$ and the rest three terms are easier. For $I_{131}$, we integrate $\p_t$ by parts to get
\begin{equation}\label{I131}
\begin{aligned}
I_{131}\eql&-5\sigma\int_0^T\ig\p_t\hh (\p_t^4\TP v\times\TP\eta)\cdot\p_t^4 v\dS\dt+5\sigma\ig \p_t\hh(\p_t^3\TP v\times\TP\eta)\cdot\p_t^4 v\dS\\
\eql&-5\sigma\int_0^T\ig\p_t\hh (\p_t^4\TP v\times\TP\eta)\cdot\p_t^4 v\dS\dt-5\sigma\ig \p_t\hh(\p_t^3v\times\TP\eta)\cdot\TP\p_t^4 v\dS\\
\lesssim&\sqrt{\sigma}\int_0^T\|\p_t^4 v\|_1\|\wt\p_t^4 v\|_{1.5}|\TP\eta~\p_t\hh|_{L^{\infty}}+5\sigma\|\p_t^4 v\|_{1.5}\|\p_t^3 v\|_{1}|\p_t\hh~\TP\eta|_{L^{\infty}}\\
\lesssim&\sqrt{\sigma}\int_0^TP(E_1(t))(\sqrt{\sigma~E_2(t)})\dt+5\sigma\|\p_t^4 v\|_{1.5}\|\p_t^3 v\|_{1}|\p_t\hh~\TP\eta|_{L^{\infty}}.
\end{aligned}
\end{equation}The last term should be controlled by using Young's inequality
\begin{align*}
&\sigma\|\p_t^4 v\|_{1.5}\|\p_t^3 v\|_{1}|\p_t\hh~\TP\eta|_{L^{\infty}}\lesssim\sigma\left(\eps\|\p_t^4 v\|_{1.5}^2+\|\p_t^3 v\|_{1}^2|\p_t\hh~\TP\eta|_{L^{\infty}}^2\right)\\
\lesssim&~\eps\|\wt\p_t^4 v\|_{1.5}^2+\sigma\|\p_t^3 v\|_{1}^2|\p_t\hh~\TP\eta|_{L^{\infty}}^2\\
\lesssim&~\eps\|\wt\p_t^4 v\|_{1.5}^2+\sigma\left(\PP_0+\int_0^T P(E_1(t))\dt\right).
\end{align*} 

\red{Finally, we need to control $I_{14}=\int_0^T\io A^{\mu\alpha}\p_t^5q\p_{\mu}\p_t^5v_{\alpha}\dS\dt$. The proof is parallel to our previous work \cite[(4.41)-(4.49)]{GLZ}. In the proof, we will see that the bound for initial data $\|\p_t^4 q(0)\|_1$ is necessary. }
\red{By the div-free condition for $v$, we can write $$A^{\mu\alpha}\p_\mu\p_t^5v_{\alpha}=\p_t^5(\underbrace{\nabla_A\cdot v}_{=0})-\p_t^5 A^{\mu\alpha}\p_{\mu}v_{\alpha}+\text{ lower order terms}$$ and thus it suffices to control $$L:=-\int_0^T\io \p_t^5 q \p_t^5 A^{\mu\alpha}\p_{\mu}v_{\alpha} \dy\dt.$$}
\red{Then we write $\p_t^5A^{\mu\alpha}=-\p_t^4(A^{\mu\nu}\p_{\beta} v_{\nu}A^{\beta\alpha})=-A^{\mu\nu}\p_{\beta} \p_t^4v_{\nu}A^{\beta\alpha}+\cdots$, where the omitted terms are of lower order and can be directly controlled by integrating by parts in $t$. This also explains why we need the bounds for $\|\p_t^4 q(0)\|_1$. So we have
\[
L\eql \int_0^T\io \p_t^5 q A^{\mu\nu}\p_{\beta} \p_t^4v_{\nu}A^{\beta\alpha} \p_{\mu}v_{\alpha} \dy\dt=\int_0^T\io \p_t^5 (A^{\beta\alpha} q) A^{\mu\nu}\p_{\beta} \p_t^4v_{\nu}\p_{\mu}v_{\alpha} \dy\dt+\cdots
\]where the omitted terms can be directly controlled. Next we integrate $\p_\beta$ by parts and use $A^{3\alpha} q=-\sigma\sqrt{g}\Delta_g \eta\cdot\nn \nn^{\alpha}$ to get
\begin{equation}
\begin{aligned}
L\eql&-\int_0^T\ig \sigma \p_t^5 (\sqrt{g}\Delta_g \eta\cdot\nn \nn^{\alpha}) A^{\mu\nu} \p_t^4v_{\nu}\p_{\mu}v_{\alpha} \dS\dt\\
&-\int_0^T\io A^{\beta\alpha}\p_{\beta}\p_t^5 q A^{\mu\nu}\p_t^4 v_\nu \p_\mu v_{\alpha}\dy\dt\\
=&:L_1+L_2.
\end{aligned}
\end{equation}}
\red{The term $L_2$ can be directly controlled if we integrate by parts in $t$
\begin{equation}\label{I14L2}
\begin{aligned}
L_2\lesssim&\int_0^T \|\p_t^4 q\|_1\|\p_t^5 v\|_0 \|\eta\|_3^2\|v\|_3-\io A^{\beta\alpha}\p_{\beta}\p_t^4 q A^{\mu\nu}\p_t^4 v_\nu \p_\mu v_{\alpha}\dy\bigg|^T_0\\
\lesssim&\PP_0+\int_0^T P(E_1(t))\dt+\eps\|\p_t^4 q\|_1^2.
\end{aligned}
\end{equation}}
\red{For the term $L_1$, we note that $\Delta_g\eta\cdot\nn=P(\TP\eta)\TP^2\eta\cdot\nn$, so we can integrate one $\TP$ by parts to get
\begin{equation}\label{I14L1}
L_1\eql\int_0^T\ig P(\TP\eta)\p v~(\wt\TP\p_t^4 v\cdot\nn)(\wt\TP\p_t^4 v)\dS\dt\lesssim P(N_0)\int_0^T \sigma E_2(t)\dt.
\end{equation}}
\red{Therefore we have the control of $I_{14}$
\begin{equation}\label{I14}
I_{14}\lesssim \eps\|\p_t^4 q\|_1^2+\PP_0+\int_0^T P(E_1(T))(\sigma E_2(T))\dt.
\end{equation}}

Summarizing \eqref{tgt50}-\eqref{I25}, \eqref{I6}-\eqref{I131} and \eqref{I14}, we conclude the $\p_t^5$ estimates as follows
\begin{equation}\label{tgt5}
\begin{aligned}
&\|\p_t^5 v\|_0^2+\|\p_t^5\bp\eta\|_0^2+\frac{\sigma}{2}\left|\TP\p_t^4 v\cdot\nn\right|_0^2\\
\lesssim&~ \eps\|\wt\p_t^4 v\|_{1.5}^2+\eps\|\p_t^4 q\|_1^2+\PP_0+ P(E_1(T))(\sigma E_2(T))\int_0^T P(E_1(t))(\sigma E_2(t))\dt.
\end{aligned}
\end{equation}

Since we consider the zero surface tension limit, we can take $\sigma>0$ sufficiently small to absorb the term $\sigma E_1(t)$ to LHS in the final step.

\subsection{Mixed space-time derivatives}\label{sect tg5mix}

Replace $\p_t^5$ in Section \ref{sect tgt5} by $\dd^{4}\p_t$ with $\dd=\TP$ or $\p_t$, we can similarly get the $\dd^4\p_t$ tangential estimates. The proof is similar and even easier since $\p_t^k v$ ($1\leq k\leq 4$) has at least $H^{5-k}$ regularity and may be controlled in $H^{4.5-k}$ norm on the boundary by using the trace lemma. So we omit the proof and only list the result. For the related details, one can refer to our previous work \cite[Section 4.2]{GLZ}.
\begin{equation}\label{tgmix}
\begin{aligned}
&\sum_{k=1}^4\|\TP^{5-k}\p_t^k v\|_{0}^2+\|\TP^{5-k}\p_t^k\bp\eta\|_{0}^2+\frac{\sigma}{2}\left|\TP^{6-k}\p_t^{k-1} v\cdot\nn\right|_0^2\\
\lesssim&~ \eps(\sigma E_2(T)+E_1(t))+\PP_0+ P(E_1(T))(\sigma E_2(T))\int_0^T P(E_1(t))(\sigma E_2(t))\dt.
\end{aligned}
\end{equation}

\section{Control of weighted Sobolev norms}\label{sect weighted}

\subsection{Weighted div-curl estimates}

The div-curl estimate for $\|\sqrt{\sigma} v\|_{5.5}^2$ and $\|\sqrt{\sigma} \bp\eta\|_{5.5}^2$ is done similarly as in Section \ref{sect. div bound} and \ref{sect. curl bound}, \red{while we no longer convert the remaining part to interior tangential estimates but preserve the normal trace as in \eqref{divcurl2} in Lemma \ref{hodge}. For $0\leq k\leq 4$ we have
\begin{align}
\label{divcurlv2} \|\sqrt{\sigma}\p_t^k v\|_{5.5-k}^2&\lesssim C\left(\|\sqrt{\sigma} v\|_0^2+\|\sqrt{\sigma}\dive_A \p_t^k v\|_{4.5-k}^2+\|\sqrt{\sigma}\curl_A \p_t^k v\|_{4.5-k}^2+|\sqrt{\sigma}\TP^{5-k}\p_t^k v\cdot\nn|_0^2\right),\\
\label{divcurlb2} \|\sqrt{\sigma}\p_t^k \bp\eta\|_{5.5-k}^2&\lesssim C\left(\|\sqrt{\sigma} b\|_0^2+\|\sqrt{\sigma}\dive_A b\|_{4.5-k}^2+\|\sqrt{\sigma}\curl_A \p_t^k b\|_{4.5-k}^2+|\sqrt{\sigma}\TP^{5-k}\p_t^k \bp\eta\cdot\nn|_0^2\right)
\end{align}where the constant $C=C(|\TP\eta|_{W^{1,\infty}},\|\sqrt{\sigma}\eta\|_{5.5-k}^2)>0$ depends linearly on $\|\eta\|_{5.5-k}^2$ and $b=\bp\eta$.}

For the divergence, we directly get
\begin{equation}\label{wtdivvb1}
\red{\sigma\|\dive_A v\|_{4.5}^2+\sigma\|\dive_A \bp\eta\|_{4.5}^2=0},
\end{equation}and similarly
\begin{equation}\label{wtdivvb2}
\sum_{k=1}^4\sigma\|\dive_A \p_t^kv\|_{4.5-k}^2+\sigma\|\dive_A \p_t^k\bp\eta\|_{4.5-k}^2\lesssim P(E(0))+\int_0^T P(E(t))\dt.
\end{equation}

The weighted curl estimates follow similarly as in section \ref{sect. curl bound}. By taking $\p^{4.5}$ to \eqref{curlbeq}, testing it with $\sigma \p^{4.5} \curl_A v$, and then integrating $\bp$ by parts, we have
\begin{equation}\label{energy sigma curl}
\begin{aligned}
\frac{d}{dt}\frac{1}{2}\left(\int_{\Omega} |\sqrt{\sigma}\p^{4.5}\curl_{A}v|^2+\int_{\Omega} |\sqrt{\sigma}\p^{4.5} \curl_A (\bp \eta) |^2\right)
=\int_{\Omega} \left(\sqrt{\sigma}[\p^{4.5}, \bp] \curl_A \bp\eta + \sqrt{\sigma}\p^{4.5}\mathcal{F} \right)(\sqrt{\sigma}\p^{4.5} \curl_A v)\\
+\int_{\Omega} (\sqrt{\sigma}[\p^{4.5}\curl_A,  \bp] v)(\sqrt{\sigma} \p^{4.5}\curl_A (\bp\eta))+\int_{\Omega} \sqrt{\sigma}\p^{4.5} \curl_{\p_tA} (\bp \eta)(\sqrt{\sigma} \p^{4.5}\curl_A (\bp\eta)).
\end{aligned}
\end{equation}
Since
\begin{align*}
\|\sqrt{\sigma}\mathcal{F}\|_{4.5}^2 \leq& ~\|\sqrt{\sigma}\curl_{\p_t A} v\|_{4.5}^2+\|\sqrt{\sigma}[\curl_A, \bp]\bp\eta\|_{4.5}^2 \\
\leq&~P(\|\bz\|_{5.5},  \|\eta\|_5, \|v\|_5, \|\bp\eta\|_5, \|\sqrt{\sigma}\eta\|_{5.5}, \|\sqrt{\sigma}v\|_{5.5}, \|\sqrt{\sigma}\bp\eta\|_{5.5}), 
\end{align*}
then the terms on the RHS of \eqref{energy sigma curl} can be controlled by $P(E(t))$ as well. Therefore, 
\begin{equation}
\|\sqrt{\sigma}\curl_A v\|_4^2+\|\sqrt{\sigma}\curl_A \bp\eta\|_4^2 \lesssim \int_0^T P(E(t))\red{\dt},
\end{equation} which yields
\red{\begin{equation}\label{wtcurlvb1}
\|\sqrt{\sigma}\curl_A v\|_{4.5}^2+\|\sqrt{\sigma}\curl_A \bp\eta\|_{4.5}^2 \lesssim P(E(0))+ \int_0^T P(E(t))\dt,
\end{equation}and similarly
\begin{equation}\label{wtcurlvb2}
\sum_{k=1}^4\sigma\|\curl_A \p_t^kv\|_{4.5-k}^2+\sigma\|\curl_A \p_t^k\bp\eta\|_{4.5-k}^2\lesssim \sum_{k=1}^4 P(E(0))+\int_0^T P(E(t))\dt.
\end{equation}}

\subsection{Control of the weighted boundary norms}

We still need to control \red{$\wt|\TP^{5-k}\p_t^k v\cdot \nn|_0$ and $\wt|\TP^{5-k}\p_t^k\bp\eta\cdot \nn|_0$} for $0\leq k\leq 4$. For the boundary estimates of $v$, we find that they are exactly the energy terms contributed by surface tension (cf. \eqref{tgs5}, \eqref{tgt5}-\eqref{tgmix}).

As for $\bp\eta$, when $k\geq 1$, we can directly control them by the norms of $v$:
\begin{equation}\label{wtbbdry}
\wt|\TP^{5-k}\p_t^k\bp\eta\cdot \red{\nn}|_0=\wt|\TP^{5-k}\p_t^{k-1}\bp v \cdot \red{\nn}|_0\lesssim\|\bz\|_{L^{\infty}}\|\wt\p_t^{k-1}\p v\|_{5.5-k}+\text{ lower order terms}.
\end{equation}

When $k=0$, we need to control $\wt|\TP^5\bp\eta\cdot\nn|_0$. This term naturally appears as a boundary energy term contributed by the surface tension in the $\TP^4\bp$ tangential estimate, which can be proceeded in the same way as $\TP^4\p_t$-estimate by just replacing $\p_t$ by $\bp$. The reason for that is that $\bp\eta$ and $\eta$ have the same spatial regularity, which is similar to the fact that $\p_t\eta=v$ has the same spatial regularity as $\eta$. In other words, the tangential derivative $\bp$ (note that $\bz\cdot N=0$ on $\p\Omega$!) plays the same role as a time derivative if it falls on the flow map $\eta$. We just list the result of $\TP^4\bp$-estimate
\begin{equation}\label{tgsb}
\|\TP^4 \bp v\|_0^2+\|\TP^4 \bp^2\eta\|_0^2+\frac{\sigma}{2}\left|\TP^5\bp\eta\cdot\nn\right|_0^2\bigg|_{t=T}\lesssim \PP_0+P(E_1(T))(\sigma E_2(T))\int_0^TP(E_1(t))(\sigma E_2(t))\dt.
\end{equation}

\section{The zero surface tension limit}\label{sect 0STlimit}

Now we conclude the energy estimates as follows. First, \eqref{div est} and \eqref{curl est} give the div-curl control of the non-weighted Sobolev norms. Then the interior tangential estimates obtained in \eqref{divcurlv1}-eqref{divcurlb1} are established in \eqref{tgs5} for the spatial derivatives and \eqref{tgt5}-\eqref{tgmix} for the space-time derivatives. In the control of the non-weighted Sobolev norms, the weighted energy $\sigma E_2(t)$ is needed to close the energy estimates. 

The $\wt$-weighted div-curl estimates are established in \eqref{wtdivvb2} and \eqref{wtcurlvb2}. Then we notice that the boundary normal traces are \red{exactly the aforementioned $\wt$-weighted boundary energies contributed by surface tension}. Finally, we can get the following energy estimates
\begin{equation}
E(T)=E_1(T)+\sigma E_2(T)\lesssim \eps E(T)+P(E(0))+P(E(T))\int_0^TP(E(t))\dt,
\end{equation}which together with Gr\"onwall inequality implies that there exists some $T>0$, independent of $\sigma$, such that
\begin{equation}
\sup_{0\leq t\leq T} E(t)\leq P(E(0))\leq \mathcal{C}.
\end{equation}

Finally, we prove the zero surface tension limit. Assume $(w,\bp\zeta,r)$ to be the solution of the incompressible MHD system without surface tension \eqref{MHDL0ST} and $(v^{\sigma},\bp\eta^{\sigma},q^{\sigma})$ to be the solution of \eqref{MHDLST} with $\sigma>0$. Theorem \ref{main energy} and Sobolev embedding implies that 
\begin{align}
\|v^{\sigma}\|_{C^1([0,T]\times\Omega)}^2+\|\bp\eta^{\sigma}\|_{C^1([0,T]\times\Omega)}^2+\|q^{\sigma}\|_{C^1([0,T]\times\Omega)}^2\lesssim \mathcal{C}.
\end{align} 
 By Morrey's embedding, we can prove $v^{\sigma},\bp\eta^{\sigma},q^{\sigma}\in C_t^1H_y^4([0,T]\times\Omega)\hookrightarrow C_t^1C_y^{2,\frac12}([0,T]\times\Omega),$ which implies the equi-continuity of $(v^{\sigma},\bp\eta^{\sigma},q^{\sigma})$ in $C^1([0,T]\times\Omega)$. By Arzel\`a-Ascoli lemma, we prove the uniform convergence (up to subsequence) of $(v^{\sigma},\bp\eta^{\sigma},q^{\sigma})$ as $\sigma\to 0_+$, and the limit is the solution $(w,\bp\zeta,r)$ to \eqref{MHDL0ST}.
 
\begin{appendix}
\section{Appendix: The initial data and compatibility conditions}\label{appendix}
 The required regularity of the initial data is proved by solving the time-differentiated MHD system restricted at $\{t=0\}$. Namely, given $(\eta_0, v_0, \bz, q_0)$, and let $E(t)$ be defined as in \eqref{energy}. We show
\begin{align}\label{data2}
E(0)\leq \mathcal{C},
\end{align}
where $\mathcal{C}$ is defined in Theorem \ref{main energy}. 

\subsection{One time derivative}\label{sect data 1}
First, we control 
$$\|\p_t v(0)\|_4, \|\sqrt{\sigma}\p_t v(0)\|_{4.5}, \|\p_t b(0)\|_4, \|\sqrt{\sigma}\p_t b(0)\|_{4.5}.$$
Since $\p_t v(0)-\bp \bz+\nab_{A_0} q_0=0$ and $\p_t b(0)=\p_t\bp \eta(0)=\bp v_0$, we have
\begin{align}
\|\p_t v(0)\|_{4}\lesssim\|\bz\|_{4}\|\bz\|_{5}+\|\eta_0\|_{5}\|q_0\|_{5},\quad \|\sqrt{\sigma}\p_t v(0)\|_{4.5}\lesssim\|\bz\|_{4.5}\|\sqrt{\sigma}\bz\|_{5.5}+\|\eta_0\|_5\|\sqrt{\sigma}q_0\|_{5.5}+\|\sqrt{\sigma}\eta_0\|_{5.5}\|q_0\|_5,
\end{align}
and 
\begin{align}
\|\p_t b(0)\|_{4}\lesssim\|\bz\|_{4}\|v_0\|_{5},\quad \|\sqrt{\sigma}\p_t b(0)\|_{4.5}\lesssim\|\bz\|_{4.5}\|\sqrt{\sigma}v_0\|_{5.5}.
\end{align}
These imply that we need to bound $\|q_0\|_5$ and $\|\sqrt{\sigma} q_0\|_{5.5}$. Invoking the elliptic equation verified by $q_0$, i.e., 
\begin{equation}\label{q0eq}
\begin{cases}
-\Delta_{A_0} q_0= -(\p_t A(0)^{\mu\alpha})\p_\mu (v_0)_\alpha-(\bp A_0^{\mu\alpha})\p_\mu(\bz)_\alpha+A_0^{\mu\alpha}(\p_\mu \bz\cdot \p)(\bz)_\alpha,~~~&\text{ in }\Omega,\\
q_0=\sigma\mathcal{H}_0=-\sigma\Delta_{g_0}\eta_0\cdot\nn_0,~~~&\text{ on }\Gamma(0),\\
\frac{\p q_0}{\p N}=0,~~~&\text{ on }\Gamma_{\text{fix}},\\
\end{cases}
\end{equation}
the elliptic estimate then yields
\begin{align}
\label{q0 bound}\|q_0\|_5 \leq&~ P(\|\eta_0\|_4, \|v_0\|_4, \|\bz\|_4,  \sigma |\mathcal{H}_0|_{4.5}),\\
\label{q0wt bound} \|\sqrt{\sigma}q_0\|_{5.5} \leq&~ P(\|\eta_0\|_{4.5}, \|v_0\|_{4.5}, \|\bz\|_{4.5},  \sigma^{\frac{3}{2}} |\mathcal{H}_0|_{5}),
\end{align}
where 
\begin{equation}\label{eta bdy high 1}
\sigma|\mathcal{H}_0|_{4.5}  = \sigma |\lap_{g_0} \eta_0\cdot \nn_0|_{4.5},\quad \sigma^{\frac{3}{2}}|\mathcal{H}_0|_{5}  = \sigma^{\frac{3}{2}} |\lap_{g_0} \eta_0\cdot \nn_0|_{5}.
\end{equation}

\subsection{Two and more time derivatives}\label{sect datat}
\red{To control 
$$
\|\p_t^k v(0),\p_t^k b(0)\|_{5-k}, \quad 2\leq k\leq 5
$$
 and 
 $$
 \|\wt\p_t^\ell v(0),\wt\p_t^\ell b(0)\|_{5.5-\ell}, \quad 2\leq \ell \leq 4
 $$
  we have to study the time-differentiated elliptic equation restricted on $\{t=0\}$ to control $\|\p_t^{k-1} q(0)\|_{5-k}$, $\|\wt\p_t^{\ell-1} q(0)\|_{5.5-\ell}$. In light of the first remark after Theorem \ref{main energy},  we require the extra boundary regularity of $\eta_0, v_0$ when controlling these quantities. This seems to be necessary as if we alternatively consider using the Neumann boundary conditions, we would have to control the term with one more time derivative on the velocity and thus we could never close the loop.}

\subsubsection{Two time derivatives}\label{sect data2}
When $k=2$, the control of $\p_t^2 v(0)$ requires the bounds for $\|q_t(0)\|_4$ and $\|\sigma q_t(0)\|_{4.5}$. The Dirichlet boundary condition then contributes to $|\sigma v_0|_{5.5}$ and $|\sigma^{\frac32} v_0|_6$. Specifically, we study the equations for $q_1:=\p_t q(0)$
\begin{equation}\label{q1eq}
\begin{aligned}
-\Delta_{A_0} q_1 &= -(\p_t^2 A(0)^{\mu\alpha})\p_\mu (v_0)_\alpha-(\p_t A(0)^{\mu\alpha})\p_\mu (\p_t v(0))_\alpha-(\bp \p_t A(0)^{\mu\alpha})\p_\mu(\bz)_\alpha-(\bp A_0^{\mu\alpha})\p_\mu(\p_t b(0))_\alpha\\
&+\p_t A(0)^{\mu\alpha}(\p_\mu \bz\cdot \p)(\bz)_\alpha+A_0^{\mu\alpha}(\p_\mu \bz\cdot \p)(\p_t b(0))_\alpha,
\end{aligned}
\end{equation}
with the boundary conditions
\begin{align*}
q_1=\sigma\p_t \mathcal{H}(0),~~~&\text{ on }\Gamma(0),\\
\frac{\p q_1}{\p N}=-(\p_t A^{3\alpha})q_0,~~~&\text{ on }\Gamma_{\text{fix}}.
\end{align*}
We have
\begin{align}\label{q1 bound}
\|q_1\|_4 \leq P(\|\eta_0\|_4, \|v_0\|_4, \|\p_t v(0)\|_3, \|\bz\|_4, \|\p_t b(0)\|_3,  \|q_0\|_4, \sigma |\p_t \mathcal{H}(0)|_{3.5} ),
\end{align}
and
\begin{align}\label{q1wt bound}
 \|\sqrt{\sigma}q_1\|_{4.5} \leq P(\|\eta_0\|_{4.5}, \|v_0\|_{4.5}, \|\p_t v(0)\|_{3.5}, \|\bz\|_{4.5}, \|\p_t b(0)\|_{3.5},  \|q_0\|_{4.5}, \sigma^{3/2} |\p_t\mathcal{H}(0)|_{4} ).
\end{align}
Since $\mathcal{H} = -\sigma\sqrt{g}\Delta_g \eta\cdot \nn=-\sigma g^{ij}\TP^2_{ij}\eta\cdot(\TP_1\eta\times\TP_2\eta)$, we have $\p_t\hh=\sigma \left(g^{ij}\TP^2_{ij}v\cdot(\TP_1\eta\times\TP_2\eta)+\TP^2\eta\TP v P(\TP\eta)\right).$ So, the control of $\sigma |\p_t\mathcal{H}(0)|_{3.5} $ and $\sigma^{3/2} |\p_t\mathcal{H}(0)|_{4}$ respectively reduce to
\begin{align}
\red{\sigma\left |\p_t (\Delta_{g} \eta \cdot \nn )(0)\right|_{3.5} \leq P(\|v_0\|_{5}, \|\eta_0\|_{5}, \sigma |v_0|_{5.5}, \sigma |\eta_0|_{5.5})}, \label{v bdy high1}\\
\red{\sigma^{3/2} \left|\p_t (\Delta_{g} \eta\cdot \nn) (0)\right|_4\leq P(\|v_0\|_{5}, \|\eta_0\|_{5}, \|\sqrt{\sigma} \eta_0\|_{5}, \|\sqrt{\sigma} \eta_0\|_{5.5}, \sigma^{\frac{3}{2}} |v_0|_6, \sigma^{\frac{3}{2}} |\eta_0|_{6})}\label{v bdy high2}.
\end{align}

\subsubsection{Three and four time derivatives}\label{sect data3}
We consider the $\p_t^{k-1}$-differentiated elliptic equation for $q$ at $t=0$
\begin{equation}\label{qkeq}
-\Delta_{A_0} q_{k-1}=-\p_{\nu}\left([\BB^{\mu\nu},\p_t^{k-1}]\p_\mu q\right)-\p_t^{k-1} \left(\p_t A^{\nu\alpha} \p_{\nu}v_{\alpha}-A^{\nu\alpha}\p_\nu\bz\cdot\p\bp\eta_{\alpha}-\bp A^{\nu\alpha} \p_\nu\bp\eta_{\alpha}\right)\big|_{t=0}=:f_{k-1}.
\end{equation}

When $k=3$, the control of $\|\p_t^3 v(0)\|_2$ requires the bounds for $\|\p_t^2 q(0)\|_3$, whose Dirichlet boundary condition reads 
 $$q_2=-\sigma\p_t^2 \hh(0)=-\sigma\sqrt{g}\Delta_{g_0}v_t(0)\cdot\nn_0+\cdots.$$ 
\red{Here and throughout, $\cdots$ denotes all boundary terms without the top order time derivative on the velocity, which do not contribute to the highest order boundary terms of $\eta_0$ and $v_0$.}
 \red{Using $v_t(0)=\btp\eta_0-\nabla_{A_0}q_0$ on $\Gamma$ and $\Delta_{g_0} f=P(\TP\eta)\TP^2f$, we need to control $|\sigma\TP^2\nabla_{A_0}q_0\cdot\nn_0|_{2.5}$ and $|\sigma\TP^2\btp^2\eta_0\cdot\nn_0|_{2.5}$. The latter is controlled by 
\[
|\sigma(\TP^2\btp^2\eta_0)\cdot\nn_0|_{2.5}\lesssim \|\bz\|_5^2|\sigma \Delta_{g_0}\eta_0\cdot\nn_0|_{4.5}.
\]}

\red{To control the term involving $q_0$, we need the following normal trace lemma.
\begin{lem}\label{normaltrace}
Assume the vector field $X$ satisfies $X\in L^2(\Omega)$ and $\nabla_{A_0}\cdot X=0$, then 
\begin{equation}
|X\cdot \nn_0|_{-\frac12}\lesssim \|X\|_0+\|\nabla_{A_0}\cdot X\|_0.
\end{equation}
Similarly, one has for $s\geq 1$
\begin{equation}
|X\cdot \nn_0|_{s-\frac12}\lesssim C(|\TP\eta_0|_{W^{1,\infty}},\|\eta_0\|_{s+\frac12}) \left(\|\TP^s X\|_0+\|\nabla_{A_0}\cdot X\|_{s-1}\right).
\end{equation}
\end{lem}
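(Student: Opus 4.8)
The plan is to push the estimate to the fixed reference domain $\Omega$ by Piola's identity and then apply the classical normal-trace inequality for $H(\di)$ vector fields. Since $J_0=1$ and $\p_\mu A_0^{\mu\alpha}=0$ by \eqref{piola}, the Euclidean divergence of $Y^\mu:=A_0^{\mu\alpha}X_\alpha$ is $\p_\mu Y^\mu=A_0^{\mu\alpha}\p_\mu X_\alpha=\nabla_{A_0}\cdot X$, while on $\p\Omega$ one has $Y\cdot N=A_0^{\mu\alpha}N_\mu X_\alpha=\sqrt{g_0}\,(\nn_0\cdot X)$ (up to the sign of $N$ on $\Gamma_0$) by \eqref{n}--\eqref{an}. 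Thus $\nn_0\cdot X=g_0^{-1/2}\,Y\cdot N$ on $\p\Omega$, and $g_0^{-1/2}$ is a rational function of $\TP\eta_0$, bounded below and Lipschitz with norm controlled by $|\TP\eta_0|_{W^{1,\infty}}$; since multiplication by such a function preserves $H^{\pm1/2}(\p\Omega)$ up to these constants, it suffices to estimate $|Y\cdot N|_{-1/2}$ and, for the second assertion, $|Y\cdot N|_{s-1/2}$.

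For the first inequality I would invoke the standard fact that any $Z\in L^2(\Omega)$ with $\di Z\in L^2(\Omega)$ admits a normal trace in $H^{-1/2}(\p\Omega)$ with $|Z\cdot N|_{-1/2}\lesssim\|Z\|_0+\|\di Z\|_0$, proved by testing $\di(Z\Phi)$ over $\Omega$ against an $H^1$-bounded extension $\Phi$ of an arbitrary $H^{1/2}(\p\Omega)$ datum and taking the supremum. Taking $Z=Y$ and using $\|Y\|_0\lesssim\|A_0\|_{L^\infty}\|X\|_0$ yields $|\nn_0\cdot X|_{-1/2}\lesssim\|X\|_0+\|\nabla_{A_0}\cdot X\|_0$.

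For $s\ge1$ the idea is to peel off tangential derivatives. On $\Gamma=\T^2\times\{1\}$ one has, by a Fourier computation, $|w|_{s-1/2}\lesssim|w|_{-1/2}+|\TP^{s-1}w|_{1/2}$; applied to $w=g_0^{-1/2}\,Y\cdot N$ the first term is the base case, and in $\TP^{s-1}w$ one distributes the derivatives by the Leibniz rule, the leading contribution being $g_0^{-1/2}(\TP^{s-1}Y)\cdot N$ since $N$ is constant on each component of $\p\Omega$ and commutes with $\TP$. For this I would use the refined trace estimate $|Z\cdot N|_{1/2}\lesssim\|Z\|_0+\|\TP Z\|_0+\|\di Z\|_0$, which follows from the ordinary trace theorem together with $\p_3Z^3=\di Z-\TP_iZ^i$ (so that $\|Z^3\|_{H^1(\Omega)}$ is controlled by the right-hand side). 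Taking $Z=\TP^{s-1}Y$, noting $\di(\TP^{s-1}Y)=\TP^{s-1}(\nabla_{A_0}\cdot X)$, and treating the lower-order Leibniz remainders similarly, the matter reduces to the interior product estimates for $\|\TP^{s}Y\|_0$ and its analogues, with $Y=A_0X$ and $A_0=Q(\p\eta_0)$: one keeps the top-order tangential derivative on $X$ and distributes the remaining derivatives onto $A_0$, estimating via the Kato--Ponce inequalities of Lemma \ref{katoponce}. This is exactly where the regularity $\eta_0\in H^{s+1/2}(\Omega)$ is used and where the constant $C(|\TP\eta_0|_{W^{1,\infty}},\|\eta_0\|_{s+1/2})$ originates.

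The step I expect to be the main obstacle is this last one: organizing the commutators of $\TP^{s-1}$ (and $\TP^{s}$) with the $A_0$- and $g_0^{-1/2}$-weights so that every resulting term carries at most $s$ tangential derivatives on $X$ (absorbed by $\|\TP^{s}X\|_0$) or at most $s-1$ interior derivatives on $\nabla_{A_0}\cdot X$ (absorbed by $\|\nabla_{A_0}\cdot X\|_{s-1}$), with all surplus derivatives falling on $\eta_0$. The structural facts that make this work, and prevent any loss of derivatives, are that $N$ is constant on each boundary component and that tangential differentiation commutes with the Euclidean divergence, so that $\di(\TP^{j}Y)=\TP^{j}(\nabla_{A_0}\cdot X)$ and the leading term always reduces to an already-established lower-order trace estimate.
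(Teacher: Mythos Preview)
Your proposal is correct and follows essentially the same strategy as the paper: the base case is the standard $H(\di)$ normal-trace duality argument (the paper writes the divergence theorem directly in covariant form, while you first pass to the Euclidean auxiliary vector $Y=A_0X$ via Piola's identity, but this is the same computation), and the higher-order case is handled by commuting tangential derivatives through and tracking where they land on $\nn_0$ (equivalently on $g_0^{-1/2}$ and $A_0$). The paper's own proof of the $s\ge1$ case is in fact just the one-line remark ``replace $X$ by $\TP^s X$; the constant depends on $\|\eta_0\|_{s+1/2}$ since $\TP^s$ may fall on $\nn_0$'', so your more explicit bookkeeping via the refined trace $|Z\cdot N|_{1/2}\lesssim\|Z\|_0+\|\TP Z\|_0+\|\di Z\|_0$ is a welcome elaboration rather than a genuinely different method.
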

\begin{proof}
We only prove the case $s=0$ as one can later replace $X$ by $\TP^s X$. The second inequality relies on $\|\eta_0\|_{s+\frac12}$ because $\TP^s$ may fall on $\nn_0$. For $s=0$, we pick an arbitrary test function $\varphi\in H^\frac12(\p\Omega)$ with its harmonic extension $\Phi\in H^1(\Omega)$. Then by the divergence theorem
\[
\ig X\cdot\nn_0 \varphi\dS=\io\nabla_{A_0}\cdot X\Phi\dy+\io X\cdot\nabla_{A_0}\Phi\dy
\]and taking supremum over all $\varphi$ finished the proof.
\end{proof}}

\red{Applying Lemma \ref{normaltrace} to $q_0$, we get $|\sigma\TP^2\nabla_{A_0}q_0\cdot\nn_0|_{2.5}\lesssim \sigma\|\TP^5\nabla_{A_0}q_0\|_0+\|\sigma\Delta_{A_0} q_0\|_4,$ which shows that we need the bounds for $\|\sigma\nabla q_0\|_5$. This can be controlled by invoking the standard elliptic estimate (parallel to \cite[Prop. 5.3]{gu2016construction}):
\begin{equation}\label{q0 bdy high}
\|\sigma\nabla_{A_0} q_0\|_5\lesssim C(|\sigma\eta_0|_{5.5})(\|\sigma\Delta_{A_0} q_0\|_{4}+\sigma^2|\hh_0|_{5.5})\lesssim P(|\sigma\eta_0|_{5.5},\|\eta_0,v_0,\bz\|_5,\sigma^2|\Delta_{g_0}\eta_0\cdot\nn_0|_{5.5}).
\end{equation}}
\red{Summarizing the bounds above, we get
\begin{equation}\label{q2 bound}
\|q_2\|_3\leq ~P(\|\eta_0,v_0,\bz\|_5,|\sigma\eta_0|_{5.5},|\sigma\Delta_{g_0}\eta_0\cdot\nn_0|_{4.5},|\sigma^2\Delta_{g_0}\eta_0\cdot\nn_0|_{5.5})
\end{equation}}
\red{Similarly, for $\sqrt{\sigma}$-weighted norms, we have $\|\wt\p_t^3 v(0)\|_{2.5}\leq \|\wt\bp^2 v_t(0)\|_{2.5}+\|\wt\nabla_{A_0}q_2(0)\|_{2.5}$, and then
\begin{align}
\|\wt\bp^2 v_t(0)\|_{2.5}\lesssim&~ \wt\|v_t(0)\|_{4.5}+\|\wt q_2(0)\|_{3.5},\\
\label{q2wt bound}\|\wt q_2\|_{3.5}\lesssim&~ P(\|\wt\eta_0,\wt v_0,\wt \bz\|_{5.5}, |\sigma^{\frac32}\eta_0|_6, |\sigma^{\frac32}\Delta_{g_0}\eta_0\cdot\nn_0|_{5}, \sigma^{\frac52}|\Delta_{g_0}\eta_0\cdot\nn_0|_6).
\end{align}}
 
\red{For $k=4$ we follow the same strategy to get 
\[
\|q_3\|_2\leq \|f_3\|_0+|\sigma\sqrt{g}\Delta_{g_0}\p_t^2 v(0)\cdot\nn_0|_{1.5}+\cdots\lesssim \|f_3\|_0+|\sigma\TP^2 \nabla_{A_0}q_1\cdot\nn_0|_{1.5}+|\sigma\TP^2 \btp^2 v_0\cdot\nn_0|_{1.5}+\cdots
\]where the term $f_3$ can be controlled by the aforementioned quantities. The third term is controlled by $|\sigma v_0|_{5.5}$. Then the second term is controlled by using a parallel argument as in Subsection \ref{sect data2},
\begin{align}
\label{q1 bdy high} \|\sigma\nabla_{A_0} q_1\|_4\lesssim&~\|f_1\|_3+ P(\sigma\|\eta_0,v_0,\bz,q_0\|_5,\sigma|v_0,\eta_0|_{5.5}, \sigma^2|v_0,\eta_0|_{6.5}),\\
\label{q3 bound} \|q_3\|_2\lesssim&~  P(\|\eta_0,v_0,\bz,q_0\|_5,\sigma|v_0,\eta_0|_{5.5}, \sigma^2|v_0,\eta_0|_{6.5}),\\
\label{q3 bound'} \|\wt q_3\|_{2.5}\lesssim &~ P(\wt\|\eta_0,v_0,\bz,q_0\|_{5.5},\sigma^{\frac32}|v_0,\eta_0|_{6}, \sigma^{\frac52}|v_0,\eta_0|_{7}).
\end{align}}

\subsubsection{Top-order time derivatives}\label{sect data5}
\red{Finally, we control $\|\p_t^4 q(0)\|_1$. Consider the $\p_t^4$-differentiated elliptic equations restricted on $\{t=0\}$. Below $q_4$ stands for $\p_t^4 q(0)$
\begin{equation}\label{q04eq}
-\Delta_{A_0} q_4=-\p_{\nu}\left([\BB^{\mu\nu},\p_t^4]\p_\mu q\right)-\p_t^4 \big(\p_t A^{\nu\alpha} \p_{\nu}v_{\alpha}-A^{\nu\alpha}\p_\nu\bz\cdot\p\bp\eta_{\alpha}-\bp A^{\nu\alpha} \p_\nu\bp\eta_{\alpha}\big)\big|_{t=0}=:f_4.
\end{equation}
We alternatively consider $\|\TJ^{-1}\p_t^4 q(0)\|_2\gtrsim\|\p_t^4 q(0)\|_1$. The Dirichlet boundary condition reads 
\[
q_4=-\sigma P(\TP\eta)\TP^2\p_t^3v(0)\cdot\nn_0+\cdots
\]}
\red{Applying $\TP^{-1}$ to \eqref{q04eq} and using standard elliptic estimates, we get
\begin{equation}
\|\TJ^{-1}q _4\|_2\lesssim \|\TJ^{-1} f_4\|_0+|\sigma P(\TP\eta)\TP^2\p_t^3v(0)\cdot\nn_0|_{0.5},
\end{equation}where the first term is directly bounded
\[
\|\TJ^{-1} f_4\|_0\leq \|f_4\|_0\leq P\left(\sum_{j=0}^2\|\p_t^j v(0)\|_3,\|\p_t^3v(0)\|_2,\|\p_t^4 v(0)\|_1,\|\bz,\eta_0,q_0,q_1,q_2\|_3,\|q_3\|_2\right).
\]}
\red{Invoking $\p_t v=\bp^2\eta_0-\nabla_{A_0}q_0$ twice, we have 
\begin{equation}
|\sigma P(\TP\eta)\TP^2\p_t^3v(0)\cdot\nn_0|_{0.5}\lesssim P(|\TP\eta|_{W^{1,\infty}})\left(|\sigma\nabla_{A_0} q_2\cdot\nn_0|_{2.5}+\|\bz\|_5^4|\sigma\Delta_{g_0}\eta_0\cdot\nn_0|_{4.5}+|\sigma\nabla_{A_0}q_0\cdot\nn_0|_{4.5}\right),
\end{equation}where the second term and the third term also appear in Subsection \ref{sect data3}. Applying Lemma \ref{normaltrace} to the first boundary term and invoking \eqref{qkeq} for $k=3$, we get $|\sigma\nabla_{A_0} q_2\cdot\nn_0|_{2.5}\lesssim\|\sigma \nabla_{A_0}q_2\|_3+\|\sigma\Delta_{A_0}q_2\|_2=\|\sigma \nabla_{A_0}q_2\|_3+\|f_2\|_2,
$, where the top order terms in $f_2$ is $\p\p_t^2v(0)$, $\p\p_t^2\bp\eta (0)$ and $\p^2\p_t q(0)$ which have been controlled before. }

\red{For the term $\|\sigma \nabla_{A_0}q_2\|_3$, we do not use the Dirichlet boundary condition to control as in Subsection \ref{sect data3} because this will further increase the regularity of the mean curvature. Instead, we use the Neumann boundary condition as in the proof of Proposition \ref{lem qell}. This still makes sense, as we have already solved $q_2$ and $\p_t^3v(0)$ at this point. Following \eqref{qnormal}, it suffices to control $\|\sigma \TP^3\nabla_{A_0}q_2\|_0$. Then we take $\TP^3$ in \eqref{qkeq} for $k=3$ and convert the right-hand side to the divergence form. Since the source term contains up to three time derivatives that have been controlled before, we can use the same method as in Proposition \ref{lem qell} to finish the control of the initial data.}

\subsection{The compatibility conditions}
The above analysis yields that the initial data of the $\sigma>0$ problem has to satisfy the compatibility conditions up to the fourth order, where the $j$-th order ($0\leq j\leq 4$) compatibility conditions read
\begin{equation} 
\begin{aligned}
q_{j}=\sigma\p_t^{j}\mathcal{H}(0) \quad &\text{on}\,\,\Gamma(0),\\ \quad 
 \p_3 q_j=0\quad &\text{on}\,\,\Gamma_{\text{fix}}.
 \end{aligned}
\end{equation}

\red{We conclude that the initial value of the energy functional $E(0):=E_1(0)+\sigma E_2(0)$ depends on the following bounds
\[
\|\eta_0,v_0,\bz\|_5,~~\wt\|\eta_0,v_0,\bz\|_{5.5},~~\sum_{k=0}^{3}\sigma^{1+\frac{k}{2}}\left|\eta_0,v_0\right|_{5.5+\frac{k}{2}},~~\sum_{k=0}^{3}\sigma^{1+\frac{k}{2}}\bigg|\underbrace{\Delta_{g_0}\eta_0\cdot\nn_0}_{=\hh_0}\bigg|_{4.5+\frac{k}{2}}.
\]}

Furthermore, this can be carried over to the $\sigma=0$ problem, namely, the first line is replaced by $q_j =0$ on $\Gamma(0)$. And \red{all $\sigma$-weighted norms vanish when there is no surface tension, and that initial data is exactly for the $\sigma=0$ case as in \cite{gu2016construction}.} 

\end{appendix}

{\small

}
\end{document}